\def\Var{{\rm Var\,}}
\def\E{\mathbb E}
\def\p{\mathbb P}
\def\ii{{\bf i}}
\def\jj{{\bf j}}
\def\kk{{\bf k}}
\def\rr{{\bf r}}
\newcommand*{\ind}[1]{\mathbf{1}_{\{#1\}}}
\newcommand*{\Ind}[1]{\mathbf{1}_{#1}}
\newcommand{\Ent}{\mathrm{Ent}}
\def\bfi{\mathbf{i}}
\newcommand{\ub}[1]{^{(#1)}}
\newcommand{\uu}[1]{^{\underline{#1}}}
\newcommand{\D}{\mathbf{D}}
\newcommand{\R}{\mathbb{R}}
\newcommand{\N}{\mathbb{N}}
\newcommand{\id}{\mathrm{Id}}
\newtheorem{lemma}{Lemma}[section]
\newtheorem{theorem}[lemma]{Theorem}
\newtheorem{prop}[lemma]{Proposition}
\newtheorem{cor}[lemma]{Corollary}
\def\norm#1{\left\| #1 \right\|}
\def\dd{{\bf d}}
\def\ee{{\bf e}}
\title{Concentration inequalities for non-Lipschitz functions with bounded derivatives of higher order\thanks{Research partially supported by the Polish Ministry of Science and Higher Education Iuventus Plus Grant no. IP 2011 000171.}}
\author{Rados{\l}aw Adamczak\thanks{Institute of Mathematics. University of Warsaw. Banacha 2, 02-097 Warszawa, Poland.}, Pawe{\l} Wolff\footnotemark[2] \thanks{Institute of Mathematics. Polish Academy of Sciences. \'Sniadeckich 8, 00-956 Warszawa, Poland.}}
\begin{document}
\maketitle
\begin{abstract}
Building on the inequalities for homogeneous tetrahedral polynomials in independent Gaussian variables due to R. Lata{\l}a we provide a concentration inequality for non-necessarily Lipschitz functions $f\colon \R^n \to \R$ with bounded derivatives of higher orders, which hold when the underlying measure satisfies a family of Sobolev type inequalities
\begin{displaymath}
\|g- \E g\|_p \le C(p)\|\nabla g\|_p.
\end{displaymath}
 Such Sobolev type inequalities hold, e.g., if the underlying measure satisfies the log-Sobolev inequality (in which case $C(p) \le C\sqrt{p}$) or the Poincar\'e inequality (then $C(p) \le Cp$). Our concentration estimates are expressed in terms of tensor-product norms of the derivatives of $f$.

 When the underlying measure is Gaussian and $f$ is a polynomial (non-necessarily tetrahedral or homogeneous), our estimates can be reversed (up to a constant depending only on the degree of the polynomial). We also show that for polynomial functions, analogous estimates hold for
arbitrary random vectors with independent sub-Gaussian coordinates.

 We apply our inequalities to general additive functionals of random vectors (in particular linear eigenvalue statistics of random matrices) and the problem of counting cycles of fixed length in Erd\H{o}s-R{\'e}nyi random graphs, obtaining new estimates, optimal in a certain range of parameters.

\medskip

\noindent \textbf{Keywords:} concentration of measure, Gaussian chaos, Sobolev inequalities

\noindent \textbf{AMS Classification:} Primary 60E15, 46N30; Secondary 60B20, 05C80
\end{abstract}

\section{Introduction}
Concentration of measure inequalities are one of the basic tools in modern probability theory (see the monograph \cite{LedouxConcBook}). The prototypic result for all concentration theorems is arguably the Gaussian concentration inequality \cite{BorellGaussianConc,SCGaussianConc}, which asserts that if $G$ is a standard Gaussian vector in $\R^n$ and $f\colon \R^n \to \R$ is a 1-Lipschitz function, then for all $t > 0$,
\begin{displaymath}
\p(|f(G) - \E f(G)|\ge t) \le 2\exp(-t^2/2).
\end{displaymath}

Over the years the above inequality has found numerous applications in the analysis of Gaussian processes, as well as in asymptotic geometric analysis (e.g. in modern proofs of Dvoretzky type theorems). Its applicability in geometric situations comes from the fact that it is dimension free and all norms in $\R^n$ are Lipschitz with respect to one another. However, there are some probabilistic or combinatorial situations, when one is concerned with functions that are not Lipschitz. The most basic case is the probabilistic analysis of polynomials in independent random variables, which arise naturally, e.g., in the study of multiple stochastic integrals, in discrete harmonic analysis as elements of the Fourier expansions on the discrete cube or in numerous problems of random graph theory, to mention just the famous subgraph counting problem \cite{JaRuInf, JanOleRu,ChatterjeeTr,DeMarcoKahnTr,DeMarcoKahnCl}.

The concentration of measure or more generally integrability properties for polynomials have attracted a lot of attention in the last forty years. In particular Bonami \cite{Bon} and Nelson~\cite{N} provided hypercontractive estimates (Khintchine type inequalities) for polynomials on the discrete cube and in the Gauss space, which have been later extended to other random variables  by Kwapie\'{n} and Szulga \cite{KwapienSzulga} (see also \cite{KwapienWoyczynski}). Khintchine type inequalities have been also obtained in the absence of independence for polynomials under log-concave measures by Bourgain \cite{BourgainConvPolynomials}, Bobkov \cite{BobkovPolynomials}, Nazarov-Sodin-Volberg \cite{NazarovSodinVolberg} and Carbery-Wright \cite{CarberyWright}.

Another line of research is to provide two sided estimates of moments of polynomials in terms of deterministic functions of the coefficients.
Borell \cite{Bo} and Arcones-Gin\'{e} \cite{AG} provided such two sided bounds for homogeneous polynomials in Gaussian variables. They were expressed in terms of expectations of suprema of certain empirical processes. Talagrand \cite{TalagrandNewConc} and Bousquet-Boucheron-Lugosi-Massart \cite{BLMEntropy, BBLM} obtained counterparts of these results for homogeneous tetrahedral\footnote{A multivariate polynomial is called tetrahedral if all variables appear in it in power at most one.} polynomials in Rademacher variables and {\L}ochowski \cite{Loch} and Adamczak \cite{AdLogSobConv} for random variables with log-concave tails. Inequalities of this type, while implying (up to constants) hypercontractive bounds, have a serious downside as the analysis of the empirical processes involved is in general difficult. It is therefore important to obtain two-sided bounds in terms of purely deterministic quantities. Such bounds for random quadratic forms in independent symmetric random variables with log-concave tails have been obtained by Lata{\l}a \cite{L1} (the case of linear forms was solved earlier by Gluskin and Kwapie\'n in \cite{GK}, whereas bounds for quadratic forms in Gaussian variables were obtained by Hanson-Wright \cite{HansonWright}, Borell \cite{Bo} and Arcones-Gin\'{e} \cite{AG}). Their counterparts for multilinear forms of arbitrary degree in nonnegative random variables with log-concave tails have been derived by Lata{\l}a and {\L}ochowski \cite{LL}. As for the symmetric case, the general problem is still open. An important breakthrough has been obtained by Lata{\l}a \cite{L2}, who proved two-sided estimates for Gaussian chaoses of arbitrary order, that is for homogeneous tetrahedral polynomials of arbitrary degree in independent Gaussian variables (we recall his bounds below as they are the starting point for our investigations). For general symmetric random variables with log-concave tails similar bounds are known only for chaoses of order at most three \cite{Chaos3d}. 

Polynomials in independent random variables have been also investigated in relation with combinatorial problems, e.g. with subgraph counting  \cite{JaRuInf, JanOleRu,ChatterjeeTr,DeMarcoKahnTr,DeMarcoKahnCl}.  The best known result for general polynomial in this area has been obtained by Kim and Vu \cite{KimVuConc,VuConc}, who presented a family of powerful inequalities for $[0,1]$-valued random variables. Over the last decade they have been applied successfully to handle many problems in probabilistic combinatorics. Some recent inequalities for polynomials in the so called  subexponential random variables have been also obtained by Schudy and Sviridenko \cite{SchudySviridenko1,SchudySviridenko2}. They are a generalization of the special case of exponential random variables in \cite{LL} and are expressed in terms of quantities similar to those considered by Kim-Vu.

Since it is beyond the scope of this paper to give a precise account of all the concentration inequalities for polynomials, we refer the reader to the aforementioned sources and recommend also the monographs \cite{KwapienWoyczynski,dlPg}, where some parts of the theory are presented in a uniform way. As already mentioned we will present in detail only the results from \cite{L2}, which are our main tool as well as motivation.

As for concentration results for general non-Lipschitz functions, the only reference we are aware of, which addresses this question is \cite{GuillinJoulin}, where the Authors obtain interesting inequalities for stationary measures of certain Markov processes and functions satisfying a Lyapunov type condition. Their bounds are not comparable to the ones which we present in this paper. On the one hand they work in a more general Markov process setting, on the other hand, when specialized, e.g., to quadratic forms of Gaussian vectors, they do not recover optimal inequalities given in \cite{Bo,AG,L2} (see Section 4 in \cite{GuillinJoulin}). Since the language of \cite{GuillinJoulin} is very different from ours, we will not describe the inequalities obtained therein and refer the interested reader to the original paper.

Let us now proceed to the presentation of our results. To do this we will first formulate a two sided tail and moment inequality for homogeneous tetrahedral polynomials in i.i.d. standard Gaussian variables due to Lata{\l}a \cite{L2}. To present it in a concise way we need to introduce some notation which we will use throughout the article. For a positive integer $n$ we will denote $[n] = \{1,\ldots,n\}$. The cardinality of a set $I$ will be denoted by $\# I$. For $\ii = (i_1,\ldots,i_d) \in [n]^d$ and $I\subseteq[d]$ we write $\ii_{I}=(i_{k})_{k\in I}$. We will also denote $|\ii| = \max_{j\le d} {i_j}$.

Consider thus a $d$-indexed matrix $A = (a_{i_1,\ldots,i_d})_{i_1,\ldots,i_d = 1}^n$, such that $a_{i_1,\ldots,i_d} = 0$ whenever $i_j = i_k$ for some $j\neq k$, a sequence $g_1,\ldots,g_n$
of i.i.d. $\mathcal{N}(0,1)$ random variables and define
\begin{align}\label{eq:Z_def}
Z = \sum_{\ii \in [n]^d} a_{\ii} g_{i_1}\cdots g_{i_d}.
\end{align}
Without loss of generality we can assume that the matrix $A$ is symmetric, i.e., for all permutations $\sigma\colon [n]\to [n]$, $a_{i_1,\ldots,i_d} = a_{\sigma(i_1),\ldots,\sigma(i_d)}$.

Let now $P_d$ be the set of partitions of $\{1,\ldots,d\}$ into nonempty, pairwise disjoint sets. For a partition $\mathcal{J} =\{J_1,\ldots,J_k\}$, and a $d$-indexed matrix $A = (a_\ii)_{\ii \in [n]^d}$ (non-necessarily symmetric or with zeros on the diagonal), define
\begin{equation}\label{Gaussian_norm_def_intro}
\|A\|_{{\cal J}}=\sup\Big\{\sum_{\ii\in [n]^d} a_{\ii}\prod_{l=1}^k x\ub{l}_{\bfi_{J_l}}\colon
\|(x\ub{l}_{\bfi_{J_l}})\|_2\leq 1, 1\leq l\leq k \Big\},
\end{equation}
where $\|(x_{\ii_{J_l}})\|_2 = \sqrt{\sum_{|\ii_{J_l}|\le n} x_{\ii_{J_l}}^2}$. Thus, e.g.,
\begin{align*}
\|(a_{ij})_{i,j\le n}\|_{\{1,2\}}&= \sup\{ \sum_{i,j\le n} a_{ij}x_{ij}\colon \sum_{i,j\le n} x_{ij}^2 \le 1\} = \sqrt{\sum_{i,j\le n}a_{ij}^2} = \|(a_{ij})_{i,j\le n}\|_{\textup{HS}},\\
\|(a_{ij})_{i,j\le n}\|_{\{1\}\{2\}}&= \sup\{ \sum_{i,j\le n} a_{ij}x_iy_j\colon \sum_{i\le n} x_{i}^2\le 1,\sum_{j\le n}y_j^2 \le 1\} = \|(a_{ij})_{i,j\le n}\|_{\ell_2^n\to \ell_2^n},\\
\|(a_{ijk})_{i,j,k\le n}\|_{\{1,2\} \{3\}} &= \sup\{ \sum_{i,j,k\le n} a_{ij}x_{ij}y_k\colon \sum_{i,j\le n} x_{ij}^2\le 1,\sum_{k\le n}y_k^2 \le 1\}.
\end{align*}

From the functional analytic perspective the above norms are injective tensor product norms of $A$ seen as a multilinear form on $ (\R^{n})^d$ with the standard Euclidean structure.

We are now ready to present the inequalities by Lata{\l}a. Below, as in the whole article by $C_d$ we denote a constant, which depends only on $d$. The values of $C_d$ may differ between occurrences.

\begin{theorem}\label{thm:Latala_intro} For any $d$-indexed symmetric matrix $A = (a_{\ii})_{\ii \in [n]^d}$ such that $a_\ii = 0$ if $i_j = i_k$ for some $j\neq k$, the random variable $Z$, defined by \eqref{eq:Z_def} satisfies for all $p \ge 2$,
\begin{displaymath}
C_d^{-1} \sum_{\mathcal{J}\in P_d} p^{\#\mathcal{J}/2} \|A\|_{\mathcal{J}} \le \|Z\|_p \le C_d \sum_{\mathcal{J}\in P_d} p^{\#\mathcal{J}/2} \|A\|_{\mathcal{J}}.
\end{displaymath}
As a consequence, for all $t > 1$,
\begin{displaymath}
C_d^{-1}\exp\Big(-C_d\min_{\mathcal{J}\in P_d} \Big(\frac{t}{\|A\|_\mathcal{J}}\Big)^{2/\#\mathcal{J}}\Big)
 \le \p(|Z| \ge  t) \le C_d\exp\Big(-\frac{1}{C_d}\min_{\mathcal{J}\in P_d} \Big(\frac{t}{\|A\|_\mathcal{J}}\Big)^{2/\#\mathcal{J}}\Big).
\end{displaymath}
\end{theorem}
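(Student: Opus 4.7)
The plan is to prove the two-sided moment inequality first and then derive the tail inequality by optimization in Markov's inequality. The moment bound splits into a lower and an upper direction, the latter being substantially harder.

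For the lower bound I would argue partition-by-partition. Fix $\mathcal{J} = \{J_1,\ldots,J_k\} \in P_d$ and choose near-extremal unit tensors $x^{(l)} = (x^{(l)}_{\ii_{J_l}})$ realizing the supremum in \eqref{Gaussian_norm_def_intro}. A standard decoupling inequality for tetrahedral Gaussian chaoses reduces $\|Z\|_p$, up to a factor $C_d$, to $\|Z'\|_p$ where $Z' = \sum_\ii a_\ii g^{(1)}_{i_1}\cdots g^{(d)}_{i_d}$ uses an independent Gaussian sequence in every coordinate slot. Grouping the factors along $\mathcal{J}$ and contracting against $\prod_l x^{(l)}$ produces a product of $k$ independent blocks, each of which, by Gaussian hypercontractivity inside the block, contributes a factor of order $p^{\#J_l/2}$ in $L_p$; the outcome is $\|Z'\|_p \ge c_d p^{\#\mathcal{J}/2}\|A\|_\mathcal{J}$. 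Since $|P_d|$ is a $d$-dependent constant, the sum over $\mathcal{J}$ is comparable to the maximum and the claimed lower bound follows.

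For the upper bound I would proceed by induction on $d$. The base case $d=1$ is $\|\sum_i a_i g_i\|_p \le C\sqrt{p}\,\|A\|_2$, a one-line consequence of Gaussian concentration. For the inductive step, freeze $(g_{i_d})_{i_d \le n}$ and regard $Z$ as a $(d-1)$-chaos with random coefficient tensor $B_{\ii_{[d-1]}} = \sum_{i_d} a_\ii g_{i_d}$; Minkowski together with the inductive hypothesis gives $\|Z\|_p \le C_{d-1}\sum_{\mathcal{K} \in P_{d-1}} p^{\#\mathcal{K}/2}\,\bigl\|\,\|B\|_\mathcal{K}\,\bigr\|_p$. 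Each $\|B\|_\mathcal{K}$ is the supremum of a centered Gaussian process indexed by the unit ball of the product of Euclidean spaces prescribed by $\mathcal{K}$, and bounding its $L_p$ norm is where the essential work lies. Following Lata{\l}a's approach, I would slice $A$ dyadically according to the magnitudes of its entries and, on each slice, cover the index set by a carefully chosen net and apply a chaining/entropy estimate ultimately rooted in the majorizing measure theorem. At each scale the contribution should match, up to $C_d$, a term of the form $p^{1/2}\|A\|_\mathcal{J}$ for a partition $\mathcal{J} \in P_d$ obtained from $\mathcal{K}$ either by inserting the index $d$ into some existing block of $\mathcal{K}$ or by appending $\{d\}$ as a new singleton; since this recipe enumerates every element of $P_d$, summing over $\mathcal{K}$ produces all partition-norms on the right-hand side. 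Making the chaining/entropy estimate for these mixed Gaussian suprema align precisely with the injective tensor geometry of the norms $\|\cdot\|_\mathcal{J}$ is the main technical obstacle.

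Finally, the tail bounds follow routinely from the moment bounds. For the upper tail, Markov gives $\p(|Z| \ge e\|Z\|_p) \le e^{-p}$; choosing $p = c_d \min_{\mathcal{J}}(t/\|A\|_\mathcal{J})^{2/\#\mathcal{J}}$ makes each summand $p^{\#\mathcal{J}/2}\|A\|_\mathcal{J}$ a constant fraction of $t$ and produces the stated exponential decay. The matching lower tail is a Paley--Zygmund consequence of the moment lower bound, applied with the hypercontractive comparison $\|Z\|_{2p} \le C_d\|Z\|_p$ (itself following from the Gaussian hypercontractive inequality applied $d$ times) to control the fourth-moment ratio.
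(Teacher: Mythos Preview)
The paper does not give its own proof of this theorem: Theorem~\ref{thm:Latala_intro} is quoted from Lata{\l}a's work \cite{L2} and used as a black box throughout. The only argument supplied in the paper is the remark (just after Theorem~\ref{thm_Latala_dec}) that the decoupling inequalities of de la Pe\~na--Montgomery-Smith (Theorem~\ref{thm:decoupling}) make the statement formally equivalent to the decoupled version, Theorem~\ref{thm_Latala_dec}, and the observation that the tail estimates follow from the moment estimates by Chebyshev and Paley--Zygmund (the paper refers to the proof of Corollary~1 in \cite{L2} for the latter). So there is no ``paper's proof'' to compare against beyond this reduction.

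Your outline is broadly in the spirit of Lata{\l}a's actual argument in \cite{L2} --- decoupling, induction on $d$, and a hard Gaussian-process estimate for the random norms $\|B\|_{\mathcal{K}}$ --- and your derivation of the tail bounds from the moment bounds is correct. Two comments on the sketch itself. In the lower bound you write that each block $J_l$ ``contributes a factor of order $p^{\#J_l/2}$''; that would multiply out to $p^{d/2}$, not the required $p^{\#\mathcal{J}/2}$. The correct mechanism is that each \emph{block} contributes a single factor $\sqrt{p}$: one conditions on all Gaussians outside the block, uses that a linear Gaussian functional has $\|\cdot\|_p \ge c\sqrt{p}\,\|\cdot\|_2$, and then takes the supremum over the corresponding unit vector; iterating over the $k$ blocks yields $p^{k/2}\|A\|_{\mathcal{J}}$. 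For the upper bound, your description (``dyadic slicing plus chaining/majorizing measures'') correctly locates the difficulty, but Lata{\l}a's entropy bounds for the sets arising from the partition norms are quite specific and do not drop out of a generic chaining argument; this is the real content of \cite{L2}, and your sketch should be read as a roadmap rather than a proof.
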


It is worthwhile noting that for $\#\mathcal{J} > 1$, the norms $\|A\|_{\mathcal{J}}$ are not unconditional in the standard basis (decreasing coefficients of the matrix may not result in decreasing the norm). Moreover, for specific matrices they may not be easy to compute. On the other hand, for any $d$-indexed matrix $A$ and any
$\mathcal{J} \in P_d$, we have $\|A\|_\mathcal{J} \le \|A\|_{\{1,\ldots,d\}} = \sqrt{\sum_{\ii} a_\ii^2}$. Using this fact in the upper estimates above allows to recover (up to constants depending on $d$) hypercontractive estimates for homogeneous tetrahedral polynomials due to Nelson.

Our main result is an extension of the upper bound given in the above theorem to more general random functions and measures. Below we present the most basic setting we will work with and state the corresponding theorems. Some additional extensions are deferred to the main body of the article.

We will consider a random vector $X$ in $\R^n$, which satisfies the following family of Sobolev inequalities. For any $p \ge 2$ and any smooth integrable function $f\colon \R^n \to \R$,
\begin{align}\label{eq:sobolev_def}
\|f(X)-\E f(X)\|_p \le L\sqrt{p}\Big\||\nabla f(X)|\Big\|_p,
\end{align}
for some constant $L$ (independent of $p$ and $f$), where $|\cdot|$ is the standard Euclidean norm on $\R^n$. It is known (see \cite{Aida-Stroock-1994} and Theorem \ref{thm:AidaStroock} below) that if $X$ satisfies the logarithmic Sobolev inequality with constant $D_{LS}$, then it satisfies \eqref{eq:sobolev_def} with $L = \sqrt{D_{LS}/2}$. We remark that there are many criteria for a random vector to satisfy the logarithmic Sobolev inequality (see e.g. \cite{LedouxConcBook,BakryEmery,BobkovGoetze,BartheMilmanTransference,KlartagCube}), so in particular our assumption \eqref{eq:sobolev_def} can be verified for many random vectors of interest.

Our first result is the following theorem, which provides moment estimates and concentration for $D$-times differentiable functions. The estimates are expressed by $\|\cdot\|_{\mathcal{J}}$ norms of derivatives of the function (which we will identify with multi-indexed matrices). We will denote the $d$-th derivative of $f$ by $\D^d f$.

\begin{theorem}\label{thm:main_intro} Assume that a random vector $X$ in $\R^n$ satisfies the inequality \eqref{eq:sobolev_def} with constant $L$. Let $f\colon \R^n \to \R$ be a function of the class $\mathcal{C}^D$. For all $p \ge 2$ if $\D^Df(X) \in L^p$, then
\begin{displaymath}
\|f(X) - \E f(X)\|_p \le C_D\Big(L^D\sum_{\mathcal{J} \in P_D} p^{\frac{\#\mathcal{J}}{2}} \Big\|\|\D^Df(X)\|_\mathcal{J}\Big\|_p
+ \sum_{1\le d\le D-1}L^d\sum_{\mathcal{J}\in P_d} p^{\frac{\#\mathcal{J}}{2}} \|\E \D^df(X)\|_\mathcal{J}\Big).
\end{displaymath}
In particular if $\D^Df (x)$ is uniformly bounded on $\R^n$, then setting
\begin{displaymath}
\eta_f(t) = \min\left(\min_{\mathcal{J}\in P_D}\Big(\frac{t}{L^{D}\sup_{x\in \R^n}\|\D^D f(x)\|_\mathcal{J}}\Big)^{\frac{2}{\#\mathcal{J}}},\min_{1\le d\le D-1}\min_{\mathcal{J}\in P_d} \Big(\frac{t}{L^{d}\|\E \D^d f(X)\|_\mathcal{J}}\Big)^{\frac{2}{\#\mathcal{J}}}\right)
\end{displaymath}
we obtain for $t > 0$,
\begin{displaymath}
\p(|f(X) - \E f(X)| \ge t ) \le 2\exp\Big(-\frac{1}{C_D} \eta_f(t)\Big).
\end{displaymath}
\end{theorem}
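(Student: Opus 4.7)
The proof proceeds by induction on $D$. The base case $D=1$ is the assumed Sobolev inequality~\eqref{eq:sobolev_def} together with the identification $|\nabla f(X)|=\|\D^1 f(X)\|_{\{\{1\}\}}$, which supplies the single partition in $P_1$. For the inductive step $D\to D+1$, I would analyze each moment $\|\,\|\D^D f(X)\|_\mathcal{J}\,\|_p$ of the level-$D$ bound by the triangle inequality: it is at most $\|\mathbb{E}\D^D f(X)\|_\mathcal{J}+\|\,\|\D^D f(X)-\mathbb{E}\D^D f(X)\|_\mathcal{J}\,\|_p$. The first summand already fits the $d=D$ slot of the target level-$(D+1)$ bound, so only the centered part must be further reduced.

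The engine of this reduction is an envelope-theorem lemma: for any smooth tensor-valued $A\colon\R^n\to(\R^n)^{\otimes d}$ and partition $\mathcal{J}\in P_d$, at points of differentiability
\begin{displaymath}
\bigl|\nabla\|A(x)\|_\mathcal{J}\bigr|\le\|\D A(x)\|_{\mathcal{J}'}
\end{displaymath}
whenever $\mathcal{J}'\in P_{d+1}$ is obtained from $\mathcal{J}$ either by (a) adjoining $\{d+1\}$ as a new singleton block, or (b) appending $d+1$ to one of the existing blocks. One verifies this by differentiating the defining supremum at an optimal choice of test tensors: case~(a) by reinterpreting the resulting $\ell_2$-norm in the differentiation variable as an additional singleton block, case~(b) by Cauchy--Schwarz on the targeted block. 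Applying~\eqref{eq:sobolev_def} to $h(x)=\|\D^D f(x)-\mathbb{E}\D^D f(X)\|_\mathcal{J}$ then gives
\begin{displaymath}
\|h(X)\|_p\le\mathbb{E} h(X)+L\sqrt{p}\,\bigl\|\,\|\D^{D+1}f(X)\|_{\mathcal{J}'}\,\bigr\|_p.
\end{displaymath}
Because case~(b) may be exercised at any block and at any stage of the iteration, aggregating over all sequences of choices generates each partition $\mathcal{J}\in P_D$ appearing in the leading term of the target bound.

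The main technical obstacle is to control the residual $\mathbb{E} h(X)=\mathbb{E}\|\D^D f(X)-\mathbb{E}\D^D f(X)\|_\mathcal{J}$ in a way that respects the stated $p^{\#\mathcal{J}'/2}$ scaling on \emph{every} successor partition, not just the Hilbert--Schmidt one. A crude bound $\mathbb{E} h(X)\le L\sqrt{2}\,\|\,\|\D^{D+1}f(X)\|_{\mathrm{HS}}\,\|_2$ coming from $\|\cdot\|_\mathcal{J}\le\|\cdot\|_{\mathrm{HS}}$ and coordinatewise Poincar\'e wastes powers of $p$ on partitions other than $\{[d+1]\}$. What the argument needs is a partition-aware bound $\mathbb{E} h(X)\le CL\,\|\,\|\D^{D+1}f(X)\|_{\mathcal{J}'}\,\|_2$ for each admissible successor $\mathcal{J}'$; I would try to obtain it by applying~\eqref{eq:sobolev_def} at $p=2$ to the linear functionals $x\mapsto\langle\D^D f(x),T\rangle$ parametrized by test tensors $T$ in the unit ball dual to $\|\cdot\|_\mathcal{J}$, and transferring the supremum across the $L^2$-norm by a chaining or $\varepsilon$-net argument in the spirit of Lata\l a's proof of Theorem~\ref{thm:Latala_intro}.

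Finally, the concentration tail will follow from the moment estimate in the standard way. Under the uniform boundedness of $\D^D f$, the random moments $\|\,\|\D^D f(X)\|_\mathcal{J}\,\|_p$ are dominated by $\sup_x\|\D^D f(x)\|_\mathcal{J}$, yielding $p$-independent coefficients on all partition terms. Applying Markov's inequality to $|f(X)-\mathbb{E} f(X)|^p$ and minimizing over $p\ge 2$ then balances each partition contribution against $t$, and the optimal exponent is precisely $\eta_f(t)/C_D$.
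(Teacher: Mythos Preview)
Your proposal has a genuine gap, and the paper's route is quite different.

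The problem is exactly the one you flag as the ``main technical obstacle'': controlling $\E h(X)=\E\|\D^D f(X)-\E\D^D f(X)\|_{\mathcal J}$ with the correct successor norm. Your envelope lemma is fine, but each application of~\eqref{eq:sobolev_def} to $h$ costs a factor $L\sqrt p$ regardless of whether you use move~(a) or~(b); hence the ``main'' Sobolev branch can only ever produce the coefficient $L^{D+1}p^{(\#\mathcal J+1)/2}$, which matches the target exponent only when $\#\mathcal J'=\#\mathcal J+1$ (move~(a)). Partitions of $[D+1]$ with $\#\mathcal J'<D+1$ must therefore be generated from the residual terms, and for those you need precisely the partition-aware Poincar\'e-type bound $\E\|\D^D f(X)-\E\D^D f(X)\|_{\mathcal J}\le C L\,\|\,\|\D^{D+1}f(X)\|_{\mathcal J'}\|_2$ with $\#\mathcal J'=\#\mathcal J$. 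Already for $D=2$, $\mathcal J=\{\{1\},\{2\}\}$, this asks for $\E\|\D^2 f(X)-\E\D^2 f(X)\|_{\mathrm{op}}\le C L\,\|\,\|\D^3 f(X)\|_{\{1,2\}\{3\}}\|_2$, an expectation-of-supremum inequality that does not follow from~\eqref{eq:sobolev_def} applied to scalar functionals and a naive exchange of $\E$ and $\sup$. Your chaining suggestion amounts to redoing a substantial part of Lata{\l}a's analysis, and it is not clear it goes through under the sole assumption~\eqref{eq:sobolev_def}.

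The paper sidesteps this entirely by \emph{linearising} before iterating. Using that $\| |x|\, \|_p\simeq p^{-1/2}\|\langle x,G\rangle\|_p$ for an independent standard Gaussian $G$, inequality~\eqref{eq:sobolev_def} becomes $\|f(X)-\E f(X)\|_p\le C L p^{-1/2}\sqrt p\,\|\langle\nabla f(X),G\rangle\|_p$; now $\langle\nabla f(X),G\rangle$ is a \emph{scalar} function of $X$ (with $G$ frozen), so one can apply~\eqref{eq:sobolev_def} again, introduce a fresh Gaussian, and so on. After $D$ steps (Proposition~\ref{prop:moment_Poincare}) one is left with the decoupled Gaussian chaos $\langle\D^D f(X),G_1\otimes\cdots\otimes G_D\rangle$ and lower-order chaoses $\langle\E\D^d f(X),G_1\otimes\cdots\otimes G_d\rangle$, each carrying only the factor $C^dL^d p^{-d/2}\cdot p^{d/2}=C^dL^d$. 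All the $p^{\#\mathcal J/2}$ weights and all the partition norms then appear in one stroke from Lata{\l}a's Theorem~\ref{thm_Latala_dec}. In particular there is no ``residual expectation of a norm'' to control: the expectation that shows up at each step is $\E_X\langle\D^d f(X),G_1\otimes\cdots\otimes G_d\rangle=\langle\E_X\D^d f(X),G_1\otimes\cdots\otimes G_d\rangle$, a deterministic-coefficient chaos. The tail bound is then obtained as you describe, via Chebyshev with $p=\eta_f(t)/C_D$.
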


The above theorem is quite technical, so we will now provide a few comments, comparing it to known results.
\paragraph{1.} It is easy to see that if $D = 1$, Theorem \ref{thm:main_intro} reduces (up to absolute constants) to the Gaussian-like concentration inequality, which can be obtained from \eqref{eq:sobolev_def} by Chebyshev's inequality (applied to general $p$ and optimized).

\paragraph{2.} If $f$ is a homogeneous tetrahedral polynomial of degree $D$, then the tail and moment estimates of Theorem \ref{thm:main_intro} coincide with those from Lata{\l}a's Theorem. Thus Theorem \ref{thm:main_intro} provides an extension of the upper bound from Lata{\l}a 's result to a larger class of measures and functions (however we would like to stress that our proof relies heavily on Lata{\l}a's work).

\paragraph{3.} If $f$ is a general polynomial of degree $D$, then $\D^D f(x)$ is constant on $\R^n$ (and thus equal to $\E \D^D f(X)$). Therefore in this case the function $\eta_f$ appearing in Theorem \ref{thm:main_intro} can be written in a simplified form
\begin{align}\label{eq:eta_def_poly}
\eta_f(t) = \min_{1\le d\le D}\min_{\mathcal{J}\in P_d} \Big(\frac{t}{L^{d}\|\E \D^d f(X)\|_\mathcal{J}}\Big)^{2/\#\mathcal{J}}.
\end{align}

\paragraph{4.} For polynomials in Gaussian variables, the estimates given in Theorem \ref{thm:main_intro} can be reversed, like in Theorem \ref{thm:Latala_intro}. More precisely we have the following theorem, which provides an extension of Theorem \ref{thm:Latala_intro} to general polynomials.
\begin{theorem}\label{thm:Gaussian_intro} If $G$ is a standard Gaussian vector in $\R^n$ and $f\colon \R^n \to \R$ is a polynomial of degree $D$, then for all $p \ge 2$,
\begin{displaymath}
C_D^{-1} \sum_{1\le d\le D}\sum_{\mathcal{J}\in P_d} p^{\frac{\#\mathcal{J}}{2}} \|\E \D^df(G)\|_\mathcal{J}
\le \|f(G) - \E f(G)\|_p
\le C_D \sum_{1\le d\le D}\sum_{\mathcal{J}\in P_d} p^{\frac{\#\mathcal{J}}{2}} \|\E \D^df(G)\|_\mathcal{J}.
\end{displaymath}
Moreover for all $t > 0$,
\begin{displaymath}
\frac{1}{C_D}\exp\Big(-C_D \eta_f(t)\Big) \le \p(|f(G) - \E f(G)| \ge t) \le C_D\exp\Big(-\frac{1}{C_D} \eta_f(t)\Big),
\end{displaymath}
where
\begin{align*}
\eta_f(t) = \min_{1\le d\le D}\min_{\mathcal{J}\in P_d} \Big(\frac{t}{\|\E \D^d f(G)\|_\mathcal{J}}\Big)^{2/\#\mathcal{J}}.
\end{align*}
\end{theorem}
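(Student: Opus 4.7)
My plan is to deduce Theorem~\ref{thm:Gaussian_intro} by combining Theorem~\ref{thm:main_intro}, Latała's Theorem~\ref{thm:Latala_intro}, and the Wiener--It\^o (Hermite) chaos decomposition of $f(G)$. The upper moment bound is essentially immediate: the standard Gaussian measure satisfies \eqref{eq:sobolev_def} with an absolute constant $L$ via the log-Sobolev inequality, and since $f$ is a polynomial of degree $D$, the top derivative $\D^D f$ is the constant tensor $\E \D^D f(G)$. By the simplified form \eqref{eq:eta_def_poly} noted in comment~3, Theorem~\ref{thm:main_intro} then yields exactly the upper moment bound of Theorem~\ref{thm:Gaussian_intro}.

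For the lower moment bound, the plan is to decompose $f(G) - \E f(G) = \sum_{d=1}^D Q_d$, where $Q_d$ is the projection of $f(G)$ onto the $d$-th Hermite chaos, i.e.\ the linear span of products $\prod_i H_{\alpha_i}(G_i)$ with $|\alpha|=d$. The first step is to show that each of these projections is bounded in $L^p$ with a constant depending only on $D$. This follows from a standard Ornstein--Uhlenbeck/Vandermonde argument: the semigroup $P_t$ acts as $P_t Q_d = e^{-dt}Q_d$ and is a contraction on $L^p$, so evaluating $P_{t_k}(f - \E f)$ at $D$ distinct times $t_1,\ldots,t_D$ and inverting the Vandermonde system in $e^{-t_k}$ gives $\|Q_d\|_p \le C_D\|f(G) - \E f(G)\|_p$. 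The second step is the identification of $Q_d$ as a Wiener chaos with symmetric kernel $\tilde A_d = \tfrac{1}{d!}\E \D^d f(G)$, which follows from the classical formula $Q_d = \sum_{|\alpha|=d}\tfrac{1}{\alpha!}\E[\partial^\alpha f(G)]\,H_\alpha(G)$ and the bijection between symmetric tensors of order $d$ and multi-indices of weight~$d$.

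The final step is to apply Latała's Theorem~\ref{thm:Latala_intro} to each $Q_d$. Since $Q_d$ generally involves Hermite polynomials of order $\ge 2$ in individual variables and Theorem~\ref{thm:Latala_intro} is stated only for tetrahedral forms, I would use the dilation trick: replace each coordinate $G_i$ by $N^{-1/2}\sum_{j=1}^N G_{i,j}$ with iid standard Gaussians $G_{i,j}$, so that $f$ becomes a polynomial on $\R^{nN}$ whose $d$-th Hermite chaos component is a homogeneous tetrahedral form $I_d(\tilde A_d^{(N)})$ with symmetric kernel satisfying $\|\tilde A_d^{(N)}\|_{\mathcal{J}} \to \|\tilde A_d\|_{\mathcal{J}}$ and $\|I_d(\tilde A_d^{(N)})\|_p \to \|Q_d\|_p$ as $N\to\infty$. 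Applying Theorem~\ref{thm:Latala_intro} to $I_d(\tilde A_d^{(N)})$ and passing to the limit then yields
\[
\|Q_d\|_p \ge C_D^{-1} \sum_{\mathcal{J}\in P_d} p^{\#\mathcal{J}/2}\|\E \D^d f(G)\|_{\mathcal{J}},
\]
and combining this with the projection bound and summing over $d$ gives the lower moment estimate. The two tail bounds follow in a standard manner from the corresponding moment bounds (Chebyshev at $p \asymp \eta_f(t)$ for the upper tail, Paley--Zygmund at the same scale for the lower). The step I expect to be the main obstacle is the dilation/limit argument: one must verify the convergence of the partition norms $\|\tilde A_d^{(N)}\|_{\mathcal{J}}$ with constants uniform in $N$ and in the ambient dimension, and cleanly separate the $d$-th chaos component from contributions of chaoses of lower order that also appear after dilation.
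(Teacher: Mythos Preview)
Your approach is correct and follows essentially the same route as the paper: both arguments reduce the lower moment bound to Lata{\l}a's tetrahedral estimate via the dilation trick $G_i \mapsto N^{-1/2}\sum_{j=1}^N G_{i,j}$ (the paper phrases this through multiple Wiener--It\^o integrals and Brownian increments, but it is the same mechanism), and both identify the limiting degree-$d$ kernel with $\tfrac{1}{d!}\E\D^d f(G)$. The one genuine difference is how the homogeneous components are separated: the paper dilates the full polynomial and then invokes Kwapie\'n's theorem (Theorem~\ref{thm_Kwapien}, Corollary~\ref{cor_tetrahedral}) to isolate the degree-$d$ part of the resulting tetrahedral polynomial, whereas you project onto the $d$-th chaos first via the Ornstein--Uhlenbeck/Vandermonde argument and only then dilate. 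Both devices yield $\|Q_d\|_p \le C_D\|f(G)-\E f(G)\|_p$; yours avoids quoting Kwapie\'n's result, the paper's avoids the semigroup machinery. One small correction to your stated worry: dilation is an isometric embedding of the Gaussian Hilbert space and preserves chaos degree exactly, so no lower-order contributions appear after dilation. The actual obstacle is that for finite $N$ the dilated $Q_d$ is not tetrahedral; one must show that its diagonal part vanishes in $L^p$ as $N\to\infty$, and that the $\|\cdot\|_{\mathcal J}$ norms of the off-diagonal kernels converge to $\|\tilde A_d\|_{\mathcal J}$. This is precisely what the paper does in Lemma~\ref{lemma:multivariate-Hermite-as-tetrahedral-polynomial} and equations~\eqref{eq:blown-matrices}--\eqref{eq:Df-and-b}, and the same computation settles your version.
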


\paragraph{5.} It is well known that concentration of measure for general Lipschitz functions fails e.g. on the discrete cube and one has to impose some additional convexity assumptions to get sub-Gaussian concentration \cite{TalCube}. It turns out that if we restrict to polynomials, estimates in the spirit of Theorems \ref{thm:Latala_intro} and \ref{thm:main_intro} still hold. To formulate our result in full generality recall the definition of the $\psi_2$ Orlicz norm of a random variable $Y$,
\begin{displaymath}
\|Y\|_{\psi_2} = \inf\Big\{ t > 0\colon \E \exp\Big(\frac{Y^2}{t^2}\Big) \le 2\Big\}.
\end{displaymath}
By integration by parts and Chebyshev's inequality $\|Y\|_{\psi_2} < \infty$ is equivalent to a sub-Gaussian tail decay for $Y$. We have the following result for polynomials in sub-Gaussian random vectors with independent components.
\begin{theorem}\label{thm:subgaussian_intro}
Let $X = (X_1,\ldots,X_n)$ be a random vector with independent components, such that for all $i \le n$, $\|X_i\|_{\psi_2} \le L$. Then for every polynomial $f\colon \R^n \to \R$ of degree $D$ and every $p \ge 2$,
\begin{displaymath}
\|f(X) - \E f(X)\|_p \le C_D \sum_{d=1}^D L^d \sum_{\mathcal{J}\in \mathcal{P}_d} p^{\#\mathcal{J}/2}\|\E \D^d f(X)\|_\mathcal{J}.
\end{displaymath}
As a consequence, for any $t > 0$,
\begin{displaymath}
\p\Big(|f(X) - \E f(X)| \ge  t\Big) \le 2\exp\Big(-\frac{1}{C_D}\eta_f(t)\Big),
\end{displaymath}
where
\begin{align*}
\eta_f(t) = \min_{1\le d\le D}\min_{\mathcal{J}\in P_d} \Big(\frac{t}{L^{d}\|\E \D^d f(X)\|_\mathcal{J}}\Big)^{2/\#\mathcal{J}}.
\end{align*}
\end{theorem}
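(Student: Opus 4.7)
Since a sub-Gaussian random vector with independent coordinates need not satisfy the continuous Sobolev inequality \eqref{eq:sobolev_def} (the Rademacher distribution is a standard counterexample), Theorem~\ref{thm:main_intro} cannot be invoked directly, and I would instead reduce the problem to Lata\l a's Theorem~\ref{thm:Latala_intro} for Gaussian chaoses. After centering each coordinate (replacing $X_i$ by $X_i - \E X_i$, which remains sub-Gaussian with norm $\lesssim L$) and symmetrizing via an independent copy of $X$ together with Rademacher multipliers, I may assume that $X$ is symmetric. Using Taylor's formula I would then write
\[
f(X) - \E f(X) \;=\; \sum_{d=1}^D \frac{1}{d!}\,\bigl\langle \D^d f(0),\, X^{\otimes d} - \E X^{\otimes d}\bigr\rangle,
\]
and decompose each summand according to the coincidence pattern of its $d$ summation indices: a \emph{tetrahedral} part (all $d$ indices distinct), whose expectation vanishes by independence and centering, and a \emph{diagonal} part in which at least two indices coincide.

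For the tetrahedral part I would apply a de la Pe\~na--Montgomery-Smith decoupling inequality followed by an iterated Khintchine-Kahane-type contraction: the $L^p$-norm of a symmetric tetrahedral multilinear form of degree $d$ in the $X_i$'s is dominated by $(CL)^d$ times the $L^p$-norm of the same form in an i.i.d.\ standard Gaussian vector, at which point Theorem~\ref{thm:Latala_intro} yields a bound in terms of the $\|\cdot\|_{\mathcal{J}}$-norms of the coefficient tensor $\D^d f(0)$. The diagonal part, after collecting equal indices, becomes a polynomial of strictly smaller tetrahedral degree in sub-Gaussian / sub-exponential variables of the form $X_i^k - \E X_i^k$; these contributions I would absorb by induction on the degree $D$, using Bernstein-type bounds for sums of independent sub-Weibull variables to handle the fully collapsed base terms.

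The main technical obstacle is the bookkeeping between $\|\D^d f(0)\|_{\mathcal{J}}$ and $\|\E \D^d f(X)\|_{\mathcal{J}}$. For $d = D$ these coincide since $\D^D f$ is constant, but for $d < D$ they differ by contractions of higher-order coefficient tensors $\D^{d+k} f(0)$ against moments of $X$, each such moment being at most $(CL)^k$ in absolute value. The key combinatorial step is therefore to express $\|\D^d f(0)\|_{\mathcal{J}}$ as a controlled combination of norms $\|\E \D^{d'} f(X)\|_{\mathcal{J}'}$ with $d' \ge d$ and partitions $\mathcal{J}'$ suitably related to $\mathcal{J}$, with the extra powers of $L$ absorbed into the overall $C_D L^d$ constant. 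Once the moment bound is established, the tail inequality follows routinely from Markov's inequality applied to $|f(X) - \E f(X)|^p$ and optimization over $p \ge 2$, exactly as in the derivation of the tail bound from the moment bound in Theorem~\ref{thm:Gaussian_intro}.
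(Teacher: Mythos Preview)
Your overall strategy---decoupling, symmetrization, and comparison to Gaussian chaoses via Lata\l a's theorem---matches the paper's, and your treatment of the purely tetrahedral part is essentially correct. The gap is in your handling of the diagonal terms. When indices coincide, the resulting expression is indeed tetrahedral of smaller ``distinct-variable degree'' $m<d$, but in variables of the form $X_i^{k_1},\ldots,X_i^{k_m}$ with $k_1+\cdots+k_m=d$. For $k_l\ge 2$ these are only $\psi_{2/k_l}$, not sub-Gaussian, so the inductive hypothesis (stated for sub-Gaussian coordinates) does not apply; ``Bernstein-type bounds for sub-Weibull sums'' rescue only the fully collapsed degree-one case. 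For a polynomial of degree $D$ the diagonal collapses produce genuinely multilinear expressions in \emph{mixed} sub-Weibull variables of all orders $\le D$ simultaneously (e.g.\ a term like $\sum_{i\neq k} a_{iik}\,X_i^2 X_k$), and no off-the-shelf Lata\l a-type bound for such mixed chaoses is available to close the induction.

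The paper avoids recursion entirely. After decoupling and symmetrizing, each factor $\varepsilon_i^{(l)}(X_i^{(l)})^{k_l}$ (symmetric, with $\|\cdot\|_{\psi_{2/k_l}}\le L^{k_l}$) is replaced by a product of $k_l$ fresh independent Gaussians $g_{i,1}^{(l)}\cdots g_{i,k_l}^{(l)}$ via the elementary comparison Lemma~\ref{lemma_comp_Gauss}. Every term, diagonal or not, thus becomes a Gaussian chaos of the \emph{full} degree $d=k_1+\cdots+k_m$, supported on a generalized diagonal of $[n]^d$. The crucial combinatorial input is Corollary~\ref{cor_norm_monotonicity}: restricting a $d$-indexed matrix to a level set $L(\mathcal K)$ increases its $\|\cdot\|_{\mathcal J}$-norm by at most a factor $C_d$. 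This controls all the diagonal-supported Gaussian chaoses by $\|\cdot\|_{\mathcal J}$-norms of certain full matrices $B_d$, and a short reverse induction (inequality~\eqref{eq_subgaussian_to_prove}) converts $\|B_d\|_{\mathcal J}$ back to $\|\E\D^d f(X)\|_{\mathcal J}$. This ``lift powers of $X_i$ to products of Gaussians, then use diagonal selection'' is precisely the idea your outline is missing; without it, the recursion cannot be closed within the sub-Gaussian framework of the theorem.
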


\paragraph{6.} We postpone the applications of our theorems to subsequent sections of the article and here we announce only that apart from polynomials we apply Theorem \ref{thm:main_intro} to additive functionals and $U$-statistics of random vectors, in particular to linear eigenvalue statistics of random matrices, obtaining bounds which complement known estimates by Guionnet and Zeitouni \cite{GZConc}. Theorem \ref{thm:subgaussian_intro} is applied to the special case of the problem of subgraph counting in large random graphs. In a special case when one counts copies of a given small cycle, our result allows to obtain optimal inequalities for random graphs $G(n,p)$, with $p \to 0$ slowly, namely $p \ge n^{-\frac{k-2}{2(k-1)}} \log^{-\frac12} n$, where $k$ is length of a cycle. To the best of our knowledge they are the best currently known inequalities for this range of $p$.

\paragraph{7.} Let us now briefly discuss optimality of our inequalities. The lower bound in Theorem \ref{thm:Gaussian_intro} clearly shows that Theorem \ref{thm:main_intro} is optimal in the class of measures and functions it covers up to constants depending only on $D$. As for Theorem \ref{thm:subgaussian_intro}, it is similarly optimal in the class of random vectors with independent sub-Gaussian coordinates. In concrete combinatorial applications, for $0$-$1$ random variables this theorem may be however suboptimal. This can be seen already for $D = 1$, for a linear combination of independent Bernoulli variables $X_1,\ldots,X_n$ with $\p(X_i = 1) = 1 - \p(X_i=0) = p$. When $p$ becomes small, the tail bound for such variables given e.g. by the Chernoff inequality is more subtle than what can be obtained from general inequalities for sums of sub-Gaussian random variables and the fact that $\|X_i\|_{\psi_2}$ is of order $(\log(2/p))^{-1/2}$. Roughly speaking, this is the reason why in our estimates for random graphs we have a restriction on the speed at which $p \to 0$. At the same time our inequalities still give results comparable to what can be obtained from other general inequalities for polynomials. As already noted in the survey \cite{JaRuInf}, bounds obtained from various general inequalities for the subgraph-counting problem, may not be directly comparable, i.e. those performing well in one case may exhibit worse performance in some other cases. Similarly, our inequalities cannot be in general compared e.g. to the estimates by Kim and Vu. For this reason and since it would require introducing new notation, we will not discuss these inequalities and just indicate, when presenting applications of Theorem \ref{thm:subgaussian_intro}, several situations when our inequalities perform in a better or worse way than those by Kim and Vu. Let us only mention that the Kim-Vu inequalities similarly as ours are expressed in terms of higher order derivatives of the polynomials. However, Kim and Vu (as well as Schudy and Sviridenko) look at maxima of absolute values of partial derivatives, which does  not lead to tensor-product norms which we consider. While in the general sub-Gaussian case we consider, such tensor product norms cannot be avoided (in view of Theorem \ref{thm:Gaussian_intro}), it is not necessarily the case for $0$-$1$ random variables.

\bigskip
The organization of the paper is as follows. First, in Section \ref{sec:Notation}, we introduce the notation used in the paper, next in Section \ref{sec:general_nonlipschitz} we give the proof of Theorem \ref{thm:main_intro} together with some generalizations and examples of applications. In Section \ref{sec:Gaussian} we prove Theorem \ref{thm:Gaussian_intro}, whereas in Section \ref{sec:subgaussian} we present the proof of Theorem \ref{thm:subgaussian_intro} and applications to the subgraph counting problems. In Section \ref{sec:Weibull} we provide further refinements of estimates from Section \ref{sec:general_nonlipschitz} in the case of independent random variables satisfying modified log-Sobolev inequalities (they are deferred to the end of the article as they are more technical than those of Section \ref{sec:general_nonlipschitz}). In the Appendix we collect some additional facts used in the proofs.

\paragraph{Acknowledgement} We would like to thank Michel Ledoux and Sandrine Dallaporta for interesting discussions concerning tail estimates for linear eigenvalue statistics of random matrices.

\section{Notation}\label{sec:Notation}

\paragraph{Sets and indices}
For a positive integer $n$ we will denote $[n] = \{1,\ldots,n\}$. The cardinality of a set $I$ will be denoted by $\# I$.

For $\ii = (i_1,\ldots,i_d)\in [n]^d$ and $I\subseteq[d]$ we write $\ii_{I}=(i_{k})_{k\in I}$. We will also denote $|\ii| = \max_{j\le d} {i_j}$.

For a finite set $A$ and an integer $d \ge 0$ we set
\[
  A\uu{d} = \{\ii = (i_1,\ldots,i_d) \in A^d\colon \forall_{j,k \in \{1,\ldots,d\}} \ j\neq k \Rightarrow i_j\neq i_k\}
\]
(i.e. $A\uu{d}$ is the set of $d$-indices with pairwise distinct coordinates). Accordingly we will denote $n\uu{d} = n(n-1)\cdots(n-d+1)$.

By $P_d$ we will denote the family of partitions of $[d]$ into nonempty, pairwise disjoint sets.

For a finite set $I$ by $\ell_2(I)$ we will denote the finite dimensional Euclidean space $\R^I$ endowed with the standard Euclidean norm $|x|_2 = \sqrt{\sum_{i\in I} x_i^2}$. Whenever there is no risk of confusion we will denote the standard Euclidean norm simply by $|\cdot|$.

\paragraph{Multi-indexed matrices}
For a function $f\colon \R^n \to \R$ by $\D^d f(x)$ we will denote the ($d$-indexed) matrix of its derivatives of order $d$, which we will identify with the corresponding symmetric $d$-linear form. If $M = (M_\ii)_{\ii\in [n]^d}$, $N = (N_\ii)_{\ii\in [n]^d}$ are $d$-indexed matrices, we define $\langle M,N\rangle =\sum_{\ii\in [n]^d} M_\ii N_\ii$. Thus for all vectors $y_1,\ldots,y_d \in \R^n$ we have $\D^d f(x) (y_1,\ldots,y_d) = \langle \D^d f(x),y_1\otimes\cdots\otimes y_d\rangle$, where $y_1\otimes\cdots\otimes y_d = (y_{i_1}y_{i_2}\cdots y_{i_d})_{\ii \in [n]^d}$.

We will also define the Hadamard product of two such matrices $M\circ N$ as a $d$-indexed matrix with entries $m_{\ii} = M_\ii N_\ii$ (pointwise multiplication of entries).

Let us also define the notion of ``generalized diagonals'' of a $d$-indexed matrix $A = (a_\ii)_{\ii \in [n]^d}$. For a fixed set $K \subseteq [d]$, with $\#K > 1$, the ``generalized diagonal'' corresponding to $K$  is is the set of indices $\{\ii \in [n]^d\colon i_k = i_l\;\textrm{for}\;  k,l \in K\}$.

\paragraph{Constants} We will use the letter $C$ to denote absolute constants and $C_a$ for constants depending only on some parameter $a$. In both cases the values of such constants may differ between occurrences.

\section{A concentration inequality for non-lipschitz functions}\label{sec:general_nonlipschitz}
In this Section we prove Theorem \ref{thm:main_intro}. Let us first state our main tool, which is an inequality by Lata{\l}a in a decoupled version.

\begin{theorem}[Lata{\l}a, \cite{L2}]\label{thm_Latala_dec} Let $A = (a_\ii)_{\ii \in [n]^d}$ be a $d$-indexed matrix with real entries and let $G_1,G_2,\ldots,G_d$ be i.i.d. standard Gaussian vectors in $\R^n$. Let $Z = \langle A, G_1\otimes\cdots\otimes G_d\rangle$. Then for every $p\ge 2$,
\begin{displaymath}
C_d^{-1}\sum_{\mathcal{J}\in P_d}p^{\#\mathcal{J}/2}\|A\|_\mathcal{J} \le \|Z\|_p \le C_d\sum_{\mathcal{J}\in P_d}p^{\#\mathcal{J}/2}\|A\|_\mathcal{J}
\end{displaymath}
\end{theorem}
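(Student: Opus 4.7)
My plan is to deduce the decoupled statement from Lata\l a's coupled version (Theorem \ref{thm:Latala_intro}) together with classical decoupling inequalities, handling the extra freedom on the diagonal by an inductive decomposition. Since $G_1,\ldots,G_d$ are independent even when their spatial indices coincide, the decoupled chaos requires no ``zero generalized diagonal'' restriction on $A$; the price to pay is that one must carefully account for the contributions of coinciding indices that are absent in the coupled set-up.

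For the upper bound, I would decompose $A = \sum_{\mathcal{K}\in P_d} A^{(\mathcal{K})}$, where $A^{(\mathcal{K})}$ retains only those entries $a_\ii$ whose equality pattern equals $\mathcal{K}$ (i.e.\ $i_j = i_k$ iff $j$ and $k$ lie in the same block of $\mathcal{K}$), and correspondingly $Z = \sum_\mathcal{K} Z^{(\mathcal{K})}$. For the finest partition $\mathcal{K}_0 = \{\{1\},\ldots,\{d\}\}$, $Z^{(\mathcal{K}_0)}$ is a decoupled chaos over distinct $d$-tuples, so the de la Pe\~{n}a--Montgomery-Smith decoupling theorem (see \cite{dlPg}) applied to the symmetrization of $A^{(\mathcal{K}_0)}$, followed by Theorem \ref{thm:Latala_intro}, yields the desired upper bound for that term, while the trivial inequality $\|A^{(\mathcal{K}_0)}\|_\mathcal{J} \le \|A\|_\mathcal{J}$ lets us replace the restricted matrix by $A$ throughout. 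For a coarser $\mathcal{K} = \{K_1,\ldots,K_k\}$ with $k<d$, parametrizing indices of pattern $\mathcal{K}$ by distinct tuples $(j_1,\ldots,j_k)\in [n]\uu{k}$ recasts
\begin{displaymath}
Z^{(\mathcal{K})} = \sum_{(j_1,\ldots,j_k)\in [n]\uu{k}} b_\jj \prod_{s=1}^k \Big(\prod_{l\in K_s} g^{(l)}_{j_s}\Big)
\end{displaymath}
as a decoupled form of reduced order $k$ in jointly independent variables $Y^{(s)}_j = \prod_{l\in K_s} g^{(l)}_j$, which satisfy $\|Y^{(s)}_j\|_p \asymp p^{\#K_s/2}$ by Gaussian hypercontractivity. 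Induction on $d$ (conditioning on blocks of Gaussians one at a time) then gives an estimate in terms of partition-norms at order $k$ of the contracted matrix $B_\mathcal{K} = (b_\jj)$, which one lifts back to $\|A\|_\mathcal{J}$-norms for partitions $\mathcal{J}\in P_d$ obtained by amalgamating the blocks $K_s$ along a partition of $[k]$.

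For the lower bound, for each $\mathcal{J} = \{J_1,\ldots,J_k\}\in P_d$ I would pick unit tensors $(x^{(l)}_{\ii_{J_l}})$ nearly achieving $\|A\|_\mathcal{J}$, and show by a Paley--Zygmund argument or direct moment computation on Gaussian polynomials of degree $d$ that testing $Z$ against the associated multilinear functional yields $\|Z\|_p \ge C_d^{-1}\, p^{\#\mathcal{J}/2}\|A\|_\mathcal{J}$. Since $|P_d|$ depends only on $d$, combining these $|P_d|$ partition-by-partition lower bounds recovers the claimed inequality (the maximum over $\mathcal{J}$ dominates the sum up to a $d$-dependent factor).

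The main obstacle I anticipate is the combinatorial bookkeeping in the inductive step of the upper bound: one must verify that the $p^{\#K_s/2}$ factors inherited from the products of Gaussians combine with the partition-norms of $B_\mathcal{K}$ at order $k$ precisely into terms of the form $p^{\#\mathcal{J}/2}\|A\|_\mathcal{J}$, and that the natural map sending pairs $(\mathcal{K},\mathcal{J}')$ with $\mathcal{J}'\in P_k$ to partitions of $[d]$ (by amalgamating blocks of $\mathcal{K}$ through $\mathcal{J}'$) is surjective, so that every partition $\mathcal{J}\in P_d$ on the right-hand side of the target inequality is actually produced. The rest of the argument is a routine application of decoupling and Gaussian hypercontractivity.
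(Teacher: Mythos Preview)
The paper does not prove this theorem; it is quoted from Lata\l a \cite{L2}, with the remark that in \cite{L2} the decoupled version is actually established \emph{first} and that decoupling (Theorem \ref{thm:decoupling}) then makes it formally equivalent to Theorem \ref{thm:Latala_intro}. Your route goes in the opposite direction, deducing the decoupled statement from the coupled one by a partition decomposition of $A$ according to the equality pattern of indices. This is a legitimate alternative strategy, but there is a concrete error in your argument that you should be aware of.

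You write that ``the trivial inequality $\|A^{(\mathcal{K}_0)}\|_\mathcal{J} \le \|A\|_\mathcal{J}$'' lets you replace the off-diagonal part by $A$. This inequality is \emph{not} trivial and in general fails with constant $1$: for $\#\mathcal{J}>1$ the norms $\|\cdot\|_\mathcal{J}$ are not unconditional in the standard basis, so zeroing out entries can increase the norm (the paper states this explicitly at the beginning of Section \ref{sec:subgaussian}). What is true is $\|A\circ \Ind{L(\mathcal{K}_0)}\|_\mathcal{J}\le 2^{d(d-1)/2}\|A\|_\mathcal{J}$, and this is precisely the content of Lemma \ref{lem_diagonal_selection} and Corollary \ref{cor_norm_monotonicity}, whose proof (passing through the fact that block-diagonal restriction does not increase operator norms) is not entirely obvious. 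The same remark applies each time you want to compare a $\|\cdot\|_\mathcal{J}$-norm of a diagonal piece $A^{(\mathcal{K})}$ with that of $A$. With this correction your off-diagonal step goes through.

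Your inductive treatment of the coarser pieces $Z^{(\mathcal{K})}$ is plausible but more delicate than you indicate: the block variables $Y^{(s)}_j=\prod_{l\in K_s} g^{(l)}_j$ are not Gaussian, so you cannot invoke the theorem at order $k$ directly; one really has to iterate by conditioning on one Gaussian factor at a time, which is essentially how Lata\l a's direct proof proceeds. A slicker reduction, which avoids the whole diagonal bookkeeping, is to embed the decoupled chaos as a coupled one in dimension $dn$: set $\tilde G=(G_1,\ldots,G_d)\in\R^{dn}$ and place $A$ into the block of $[dn]^d$ where the $l$-th coordinate lies in the $l$-th copy of $[n]$; the resulting matrix automatically vanishes on every generalized diagonal, and after symmetrization Theorem \ref{thm:Latala_intro} applies. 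One then checks that the $\|\cdot\|_\mathcal{J}$-norms of the symmetrized lift agree with those of $A$ up to $d$-dependent constants. Either way, the missing ingredient in your write-up is a justification of the restriction inequalities for $\|\cdot\|_\mathcal{J}$-norms, which the paper supplies in Section \ref{sec:subgaussian}.
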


Thanks to general decoupling inequalities for $U$-statistics \cite{dlPMS1}, which we recall in the Appendix (Theorem \ref{thm:decoupling}), the above theorem is formally equivalent to Theorem \ref{thm:Latala_intro}. In fact in \cite{L2} Lata{\l}a first proves the above version. In the proof of Theorem \ref{thm:main} we will need just Theorem \ref{thm_Latala_dec} (in particular in this part of the article we do not need any decoupling inequalities).

From now on we will work in a more general setting than in Theorem \ref{thm:main_intro} and assume that $X$ is a random vector in $\R^n$, such that for all $p\ge 2$ there exists a constant $L_X(p)$ such that for all bounded $\mathcal{C}^1$ functions $f \colon \R^n \to \R$,
\begin{equation}\label{eq_main_assumption_2}
\|f(X) - \E f(X)\|_p \le L_X(p) \Big\||\nabla f(X)|\Big\|_p.
\end{equation}
Clearly in this situation the above inequality generalizes to all $\mathcal{C}^1$ functions (if the right-hand side is finite then the left-hand side is well defined and the inequality holds).

Let now $G$ be a standard $n$-dimensional Gaussian vector, independent of $X$. Using the Fubini theorem together with the fact that for some absolute constant $C$, all $x \in \R^n$ and $p \ge 2$,
$C^{-1}\sqrt{p}|x| \le \|\langle x, G\rangle\|_p \le C\sqrt{p}|x|$, we can linearise the right-hand side above and write \eqref{eq_main_assumption_2} equivalently (up to absolute constants) as
\begin{align}\label{eq:linearization}
\|f(X) - \E f(X)\|_p \le \frac{C L_X(p)}{\sqrt{p}} \Big\|\langle \nabla f(X),G\rangle\Big\|_p.
\end{align}

We remark that similar linearisation has been used by Maurey and Pisier to provide a simple proof of the Gaussian concentration inequality \cite{PisierProbabMethods,PisierVolume} (see remark following Theorem \ref{thm:main} below).
Inequality \eqref{eq:linearization} has an advantage over \eqref{eq_main_assumption_2} as it allows for iteration leading to the following simple proposition.

\begin{prop}\label{prop:moment_Poincare}
Consider $p \ge 2$ and let $X$ be an $n$-dimensional random vector satisfying \eqref{eq_main_assumption_2}.
Let $f \colon \R^n \to \R$ be a $\mathcal{C}^D$ function. Let moreover $G_1,\ldots,G_D$ be independent standard Gaussian vectors in $\R^n$, independent of $X$.
Then for all $p \ge 2$, if $\D^D f(X) \in L^p$, then
\begin{align}\label{eq:moment_estimate}
\|f(X) - \E f(X)\|_p  \le&  \frac{C^{D}L_X(p)^D}{p^{D/2}}\|\langle \D^D f(X), G_1\otimes\cdots\otimes G_D\rangle\|_p \\
&+ \sum_{1\le d\le D-1} \frac{C^{d}L_X(p)^d}{p^{d/2}}\|\langle \E_X \D^d f(X), G_1\otimes\cdots\otimes G_d\rangle\|_p\nonumber.
\end{align}

\end{prop}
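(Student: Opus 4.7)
I would prove the proposition by induction on $D$, using \eqref{eq:linearization} as the single basic tool applied at each stage after conditioning on a fresh independent Gaussian vector.

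For the base case $D=1$ there is nothing to do: \eqref{eq:linearization} applied to $f$ gives exactly the claimed inequality (the sum on the right-hand side is empty), since for $D=1$ we have $\langle \nabla f(X),G_1\rangle = \langle \D^1 f(X), G_1\rangle$.

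For the inductive step, assume the proposition holds up to $D-1$ and apply it to the function $f$ itself; this reduces the task to controlling the single term
\[
\frac{C^{D-1}L_X(p)^{D-1}}{p^{(D-1)/2}}\,\bigl\|\langle \D^{D-1}f(X), G_1\otimes\cdots\otimes G_{D-1}\rangle\bigr\|_p
\]
by the $\D^D$ term plus the missing $d=D-1$ summand. To do this, fix $G_1,\ldots,G_{D-1}$ and define $h(x) = \langle \D^{D-1}f(x), G_1\otimes\cdots\otimes G_{D-1}\rangle$. Then $h \in \mathcal{C}^1$ as a function of $x$, and $\nabla_x h(x)\cdot G_D = \langle \D^{D}f(x), G_1\otimes\cdots\otimes G_D\rangle$. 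Applying \eqref{eq:linearization} to $h$ conditionally on $(G_1,\ldots,G_{D-1})$, taking $L^p$-norms in $(G_1,\ldots,G_{D-1})$ via Fubini, and using the triangle inequality together with $\E_X h(X) = \langle \E_X \D^{D-1}f(X), G_1\otimes\cdots\otimes G_{D-1}\rangle$ (legitimate because the $G_i$'s are independent of $X$), gives
\[
\bigl\|\langle \D^{D-1}f(X), G_1\otimes\cdots\otimes G_{D-1}\rangle\bigr\|_p \le \bigl\|\langle \E_X \D^{D-1}f(X), G_1\otimes\cdots\otimes G_{D-1}\rangle\bigr\|_p + \frac{CL_X(p)}{\sqrt{p}}\bigl\|\langle \D^{D}f(X), G_1\otimes\cdots\otimes G_D\rangle\bigr\|_p.
\]
Multiplying through by $C^{D-1}L_X(p)^{D-1}/p^{(D-1)/2}$ and adding to the inductive bound produces exactly \eqref{eq:moment_estimate}.

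The one point requiring care — and the main technical obstacle — is integrability: in order to apply \eqref{eq_main_assumption_2} to $h$ we need $h(X)$ to have a finite $p$-th moment, which a priori is not given, and more generally we must justify that the intermediate quantities $\|\langle \D^d f(X), G_1\otimes\cdots\otimes G_d\rangle\|_p$ are finite for $d<D$. The standard workaround is a truncation/approximation argument: one first proves the inequality under the stronger assumption that $f$ has bounded derivatives of all orders up to $D$ (where every norm in sight is finite and the iteration is unproblematic), then passes to the general case by replacing $f$ with $f_R = \chi_R \ast f$ for a smooth, compactly supported mollifier/cutoff, noting that the resulting bound involves only $\|\D^Df(X)\|_p$ and the deterministic quantities $\|\E \D^d f(X)\|$, and finally applying Fatou's lemma as $R\to\infty$. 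This monotone-convergence passage, together with the observation that convolution does not increase the $L^p$-norm of the highest derivative and commutes with $\E$ in the sense needed for the lower order terms, closes the induction and gives the proposition with an absolute constant $C$ independent of $D$ (the constant $C^D$ arising purely from the iterated application of \eqref{eq:linearization}).
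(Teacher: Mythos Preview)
Your proposal is correct and follows essentially the same approach as the paper's proof: induction on $D$, applying the $D-1$ case, then splitting the $\D^{D-1}$ term via the triangle inequality and handling the centered part by a conditional application of \eqref{eq:linearization} to $h(x)=\langle \D^{D-1}f(x),G_1\otimes\cdots\otimes G_{D-1}\rangle$. The only difference is that you devote a paragraph to the integrability issue (whether the intermediate $L^p$-norms are finite), which the paper's proof simply passes over in silence; your truncation/approximation sketch is reasonable though not fully detailed.
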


\begin{proof} Induction on $D$. For $D = 1$ the assertion of the proposition coincides with \eqref{eq:linearization}, which (as already noted) is equivalent to \eqref{eq_main_assumption_2}. Let us assume that the proposition holds for $D-1$. Applying thus \eqref{eq:moment_estimate} with $D-1$ instead of $D$, we obtain
\begin{align}\label{eq:main_prop_aux_1}
\|f(X) - \E f(X)\|_p \le& \frac{C^{D-1}L_X(p)^{D-1}}{p^{(D-1)/2}}\|\langle \D^{D-1} f(X),G_1\otimes\cdots\otimes G_{D-1}\rangle\|_p\\
&+ \sum_{d=1}^{D-2} \frac{C^{d}L_X(p)^{d}}{p^{d/2}}\|\langle \E_X \D^d f(X),G_1\otimes\cdots\otimes G_d\rangle\|_p.\nonumber
\end{align}
Applying now the triangle inequality in $L^p$, we get
\begin{align}
\|\langle \D^{D-1} f(X),G_1\otimes\cdots\otimes G_{D-1}\rangle\|_p \le& \|\langle \D^{D-1} f(X) - \E_X \D^{D-1}f(X),G_1\otimes\cdots\otimes G_{D-1}\rangle\|_p \nonumber\\
&+ \|\langle \E_X \D^{D-1}f(X),G_1\otimes\cdots\otimes G_{D-1}\rangle\|_p.\label{eq:main_prop_aux_2}
\end{align}
Let us now apply \eqref{eq:linearization} conditionally on $G_1,\ldots,G_{D-1}$ to the function $f_1(x) = \langle \D^{D-1} f(x),G_1\otimes\cdots\otimes G_{D-1}\rangle$. Since $\langle \D^{D-1} f(X) - \E_X \D^{D-1}f(X),G_1\otimes\cdots\otimes G_{D-1}\rangle = f_1(X) - \E_X f_1(X)$) and
$\langle \nabla f_1(X),G_D\rangle= \langle \D^D f(X),G_1\otimes\cdots\otimes G_D\rangle$, we obtain
\begin{align*}
&\E_X |\langle \D^{D-1} f(X)- \E_X\D^{D-1} f(X),G_1\otimes\cdots\otimes G_{D-1}\rangle|^p \\
&\le \frac{C^pL_X(p)^p}{p^{p/2}} \E_{X,G_D} |\langle \D^D f(X),G_1\otimes\cdots\otimes G_D\rangle|^p.
\end{align*}
To finish the proof it is now enough to integrate this inequality with respect to the remaining Gaussian vectors and combine the obtained estimate with
\eqref{eq:main_prop_aux_1} and \eqref{eq:main_prop_aux_2}.
\end{proof}

Let us now specialize to the case when $L_X(p) = Lp^\gamma$ for some $L>0,\gamma \ge 1/2$. Combining the above proposition with Lata{\l}a's Theorem \ref{thm_Latala_dec}, we obtain immediately the following theorem, a special case of which is Theorem \ref{thm:main_intro}.

\begin{theorem}\label{thm:main}
Assume that $X$ is a random vector in $\R^n$, such that for some constants $L>0,\gamma\ge 1/2$,  all smooth functions $f$ and all $p \ge 2$,
\begin{align}\label{eq:Sobolev_gamma}
\|f(X)-\E f(X)\|_p \le Lp^\gamma\Big\||\nabla f(X)|\Big\|_p.
\end{align}
For any smooth function $f\colon \R^n \to \R$ of class $\mathcal{C}^D$ and $p \ge 2$ if $\D^D f(X) \in L^p$, then
\begin{align*}
\|f(X)-\E f(X)\|_p \le& C_D\Big(\sum_{\mathcal{J}\in P_D} L^Dp^{(\gamma - 1/2)D + \#\mathcal{J}/2} \Big\|\|\D^D f(X)\|_\mathcal{J}\Big\|_p\\
&+\sum_{1\le d\le D-1} \sum_{\mathcal{J}\in P_d} L^d p^{(\gamma-1/2)d + \# \mathcal{J}/2}\|\E \D^d f(X)\|_{\mathcal{J}}\Big).
\end{align*}
If $\D^D f $ is bounded uniformly on $\R^n$, then for all $t > 0$,
\begin{displaymath}
\p(|f(X)-\E f(X)| \ge t) \le 2\exp\Big(-\frac{1}{C_D}\eta_f(t)\Big),
\end{displaymath}
where
\begin{align*}
\eta_f(t) &= \min(A,B),\\
A &= \min_{\mathcal{J}\in P_D}\Big(\Big(\frac{t}{L^{D}\sup_{x\in \R^n}\|\D^D f(x)\|_\mathcal{J}}\Big)^{2/((2\gamma-1) D+ \#\mathcal{J})}\Big),\\
B &= \min_{1\le d\le D-1} \min_{\mathcal{J}\in P_d} \Big(\Big(\frac{t}{L^{d}\|\E \D^d f(X)\|_\mathcal{J}}\Big)^{2/((2\gamma -1)d+\#\mathcal{J})}\Big).
\end{align*}
\end{theorem}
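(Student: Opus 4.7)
The plan is to read off the moment inequality from Proposition \ref{prop:moment_Poincare} combined with Latała's decoupled bound (Theorem \ref{thm_Latala_dec}), and then to deduce the tail bound by a standard optimized Chebyshev argument.

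For the moment estimate, I would first invoke Proposition \ref{prop:moment_Poincare} with $L_X(p) = Lp^\gamma$, so that each prefactor $L_X(p)^d/p^{d/2}$ becomes $L^d p^{(\gamma-1/2)d}$. Every term on the right-hand side has the form $\|\langle M, G_1\otimes\cdots\otimes G_d\rangle\|_p$ with either $M = \E_X \D^d f(X)$ (deterministic) or $M = \D^D f(X)$ (random, for the leading $d=D$ term). For deterministic $M$, Theorem \ref{thm_Latala_dec} applies directly and produces $\sum_{\mathcal{J}\in P_d} p^{\#\mathcal{J}/2}\|M\|_\mathcal{J}$. For the random case, I would apply Theorem \ref{thm_Latala_dec} conditionally on $X$, then take the $L^p$ norm in $X$ and use Minkowski's inequality to pull the sum over partitions outside, yielding $\sum_{\mathcal{J}\in P_D} p^{\#\mathcal{J}/2}\bigl\|\|\D^D f(X)\|_\mathcal{J}\bigr\|_p$. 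Combining prefactors, each summand in the moment bound has weight $L^d p^{(\gamma-1/2)d + \#\mathcal{J}/2}$, which is exactly what the theorem claims.

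To get tails I would use Chebyshev in the form $\p(|Y - \E Y|\ge e C_D \|Y-\E Y\|_p) \le e^{-p}$. When $\D^D f$ is uniformly bounded, the term $\bigl\|\|\D^D f(X)\|_\mathcal{J}\bigr\|_p$ is dominated by $\sup_x \|\D^D f(x)\|_\mathcal{J}$, so the whole moment bound has the deterministic form $\|Y-\E Y\|_p \le C_D \sum_\alpha a_\alpha p^{\beta_\alpha}$ with $\beta_{d,\mathcal{J}} = \tfrac{(2\gamma-1)d + \#\mathcal{J}}{2}$. Choosing
\[
p = \min_\alpha \Bigl(\tfrac{t}{c_D\, a_\alpha}\Bigr)^{1/\beta_\alpha}
\]
for a sufficiently large constant $c_D$ (accounting for the number of terms and the factor $eC_D$) makes each summand no larger than $t/(eC_D \cdot \#\text{terms})$, so that $eC_D\|Y-\E Y\|_p\le t$; this $p$ equals $\frac{1}{C_D}\eta_f(t)$ because $1/\beta_{d,\mathcal{J}} = 2/((2\gamma-1)d + \#\mathcal{J})$, exactly matching the exponents in the definitions of $A$ and $B$.

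The one small nuisance is the constraint $p\ge 2$: when $\eta_f(t)$ is smaller than some threshold, the above choice of $p$ is inadmissible, but there the stated conclusion is trivial after enlarging the outer constant so that $2\exp(-\eta_f(t)/C_D)\ge 1$. Beyond this bookkeeping, the proof is essentially mechanical, as all the analytic content is packaged inside Proposition \ref{prop:moment_Poincare} and Theorem \ref{thm_Latala_dec}; the only genuinely new step is the conditional application of Latała's inequality followed by Minkowski, which is what legitimately lets the $\|\D^D f(X)\|_\mathcal{J}$ quantity appear inside an $L^p$ norm rather than as a deterministic supremum.
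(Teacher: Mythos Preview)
Your proposal is correct and follows essentially the same route as the paper, which simply states that the moment bound is a ``straightforward combination of Proposition \ref{prop:moment_Poincare} and Theorem \ref{thm_Latala_dec}'' and that the tail bound follows by Chebyshev with $p = \eta_f(t)/C_D$, adjusting constants when $\eta_f(t)/C_D \le 2$. Your explicit mention of applying Theorem \ref{thm_Latala_dec} conditionally on $X$ and then using Minkowski is exactly the step hidden in ``straightforward,'' and your handling of the $p\ge 2$ constraint matches the paper's parenthetical remark.
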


\begin{proof}
The first part is a straightforward combination of Proposition \ref{prop:moment_Poincare} and Theorem \ref{thm_Latala_dec}. The second part follows from the first one by Chebyshev's inequality $\p(|Y|\ge e\|Y\|_p) \le \exp(-p)$ applied with $p = \eta_f(t)/C_D$ (note that if $\eta_f(t)/C_D\le 2$ then one can make the tail bound asserted in the theorem trivial by adjusting the constants).
\end{proof}

\paragraph{Remark} In  \cite{PisierProbabMethods,PisierVolume} Pisier presents a stronger inequality than \eqref{eq:Sobolev_gamma} with $\gamma = 1/2$. More specifically, he proves that if $X,G$ are independent standard centred Gaussian vectors in $\R^n$,  $E$ is a Banach space and $f \colon \R^n \to E$ is a $\mathcal{C}^1$ function, then for every convex function $\Phi \colon E \to \R$,
\begin{align}\label{eq_Pisier}
\E \Phi(f(X) - \E f(X)) \le \E \Phi\Big(L \langle \nabla f(X),G\rangle\Big),
\end{align}
where $L = \frac{\pi}{2}$. As noted in \cite{LedOle}, Caffarelli's contraction principle \cite{Caffarelli_contraction} implies that, e.g., a random vector $X$ with density $e^{-V}$, where $V\colon \R^n \to \R$ satisfies $D^2 V \ge \lambda \id$, $\lambda > 0$ satisfies the above inequality with $L = \frac{\pi}{2\sqrt{\lambda}}$ (where $G$ is still a standard Gaussian vector independent of $X$).
Therefore in this situation a similar approach as in the proof of Proposition \ref{prop:moment_Poincare} can be used for functions $f$ with values in a general Banach space. Moreover, a counterpart of Lata{\l}a's results is known for chaoses with values in a Hilbert space (to the best of our knowledge this observation has not been published, in fact it can be quite easily obtained from the version for real valued chaoses). Thus in this case we can obtain a counterpart of Theorem \ref{thm:main} (with $\gamma = 1/2$) for Hilbert space valued-functions. In the case of a general Banach space two-sided estimates for Banach space-valued Gaussian chaoses are not known. Still, one can use some known inequalities (like hypercontraction or Borell-Arcones-Gin\'{e} inequality) instead of Theorem \ref{thm_Latala_dec} and thus obtain new concentration bounds. We remark that if one uses hypercontraction, one can obtain explicit dependence of the constants on the degree of the polynomial, since explicit constants are known for hypercontractive estimates of (Banach space-valued) Gaussian chaoses and one can keep track of them during the proof. We skip the details.

\bigskip
In view of Theorem \ref{thm:main} a natural question arises: for what measures is the inequality \eqref{eq:Sobolev_gamma} satisfied?
Before we provide examples, for technical reasons let us recall the definition of the length of the gradient of a locally Lipschitz function. For a metric space $(\mathcal{X},d)$, a locally Lipschitz function $f\colon \mathcal{X} \to \R$ and $x \in \mathcal{X}$, we define
\begin{align}\label{eq:length_of_gradient}
|\nabla f|(x) = \limsup_{d(x,y)\to 0} \frac{|f(y)-f(x)|}{d(x,y)}.
\end{align}

If $\mathcal{X} = \R^n$ and $f$ is differentiable at $x$, then clearly $|\nabla f|(x)$ coincides with the Euclidean length of the usual gradient $\nabla f(x)$. For this reason, with slight abuse of notation, we will write $|\nabla f(x)|$ instead of $|\nabla f|(x)$. We will consider only measures on $\R^n$, however since we allow measures which are not necessarily absolutely continuous with respect to the Lebesgue measure, at some points in the proofs we will work with the above abstract definition.

Going back to the question of measures satisfying \eqref{eq:Sobolev_gamma}, it is well known (see e.g. \cite{Milman_role_iso}) that if $X$ satisfies the Poincar\'e inequality
\begin{align}\label{eq:Poincare}
\Var(f(X)) \le D_{Poin}\E|\nabla f(X)|^2
\end{align}
for all locally Lipschitz bounded functions, then $X$ satisfies \eqref{eq:Sobolev_gamma} with $\gamma = 1$ and $L = C\sqrt{D_{Poin}}$ (recall that $C$ always denotes a universal constant).
Assume now that $X$ satisfies the logarithmic Sobolev inequality
\begin{equation}\label{eq_log_Sobolev}
 \Ent f^2(X) \le D_{LS} \E |\nabla f(X)|^2
\end{equation}
for locally Lipschitz bounded functions, where for a nonnegative random variable $Y$,
\begin{displaymath}
\Ent Y = \E Y\log Y - \E Y\log(\E Y).
\end{displaymath}
Then, by the results from \cite{Aida-Stroock-1994}, it follows that $X$ satisfies \eqref{eq:Sobolev_gamma} with $\gamma = 1/2$ and $L = \sqrt{D_{LS}/2}$.

We will now generalize this observation to measures satisfying the so-called modified logarithmic Sobolev inequality (introduced in \cite{Gentil-Guillin-Miclo-2005}). We will present it in greater generality than needed for proving \eqref{eq:Sobolev_gamma}, since we will use it later (in Section \ref{sec:Weibull}) to prove refined concentration results for random vectors with independent Weibull coordinates.

Let $\beta \in (2,\infty)$. We will say that a random vector $Y \in \R^k$ satisfies a $\beta$-modified logarithmic Sobolev inequality if for every locally Lipschitz bounded positive function $f \colon \R^k \to \R$,
\begin{align}\label{eq:modifiedLS}
\Ent f^2(Y) \le
   D_{LS_\beta} \Big(\E |\nabla f(Y)|^2 + \E \frac{|\nabla f(Y)|^\beta}{f(Y)^{\beta-2}}\Big).
\end{align}

Let us also introduce two quantities, measuring the length of the gradient in product spaces.  Consider a locally Lipschitz function $f \colon \R^{mk} \to \R$, where we identify $R^{mk}$ with the $m$-fold Cartesian product of $\R^k$. Let $x = (x_1,\ldots,x_m)$, where $x_i \in \R^k$. For each $i=1,\ldots,m$, let $|\nabla_i f(x)|$ be the length of the gradient of $f$, treated as a function of $x_i$ only, with the other coordinates fixed. Now for $r \ge 1$, set
\begin{displaymath}
|\nabla f(x)|_r = \Big(\sum_{i=1}^m |\nabla_i f(x)|^r\Big)^{1/r}.
\end{displaymath}
Note that if $f$ is differentiable at $x$, then $|\nabla f(x)|_2 = |\nabla f(x)|$ (the Euclidean length of the ``true'' gradient), whereas for $k = 1$ (and $f$ differentiable), $|\nabla f(x)|_r$ is the $\ell_r^m$ norm of $\nabla f(x)$.

\begin{theorem}\label{thm:AidaStroock} Let $\beta \in [2,\infty)$ and $Y$ be a random vector in $\R^k$, satisfying \eqref{eq:modifiedLS}. Consider a random vector $X = (X_1,\ldots,X_m)$ in $\R^{mk}$, where $X_1,\ldots,X_m$ are independent copies of $Y$. Then for any locally Lipschitz $f \colon \R^{mk} \to \R$ such that $f(X)$ is integrable, and $p \ge 2$,
\begin{equation}\label{ineq:sobolev-ineq-from-beta-modified}
\|f(X) - \E f(X)\|_p \le C_\beta D_{LS_\beta}^{1/2} p^{1/2}\Big\||\nabla f(X)|_2\Big\|_p + D_{LS_\beta}^{1/\beta}p^{1/\alpha}\Big\||\nabla f(X)|_\beta\Big\|_p,
\end{equation}
where $\alpha = \frac{\beta}{\beta-1}$ is the H\"older conjugate of $\beta$.
\end{theorem}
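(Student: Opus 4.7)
The approach is the classical two-step route for deriving moment bounds from a Sobolev-type functional inequality: first tensorize the hypothesis to the product space $\R^{mk}$, then run an Aida-Stroock / Bobkov-Ledoux type moment bootstrap on the resulting entropy inequality.

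\textbf{Tensorization.} The standard tensorization of entropy on a product probability space, applied to $g^2$, gives $\Ent g^2(X)\le\sum_{i=1}^m \E\,\Ent_i g^2(X)$, where $\Ent_i$ denotes entropy with respect to $X_i$ with the remaining coordinates frozen. Applying \eqref{eq:modifiedLS} to each conditional entropy and summing yields, for every positive locally Lipschitz $g\colon\R^{mk}\to\R$,
$$\Ent g^2(X)\le D_{LS_\beta}\,\E\Bigl(|\nabla g(X)|_2^2 + \frac{|\nabla g(X)|_\beta^\beta}{g(X)^{\beta-2}}\Bigr),$$
the second term arising from $\sum_i |\nabla_i g|^\beta / g^{\beta-2}=|\nabla g|_\beta^\beta / g^{\beta-2}$.

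\textbf{Moment bootstrap.} By standard approximation we may assume $f$ is smooth and bounded with $\E f(X)=0$, and (via a separate reduction using Jensen) $p\ge\beta$; the complementary range $2\le p\le\beta$ follows from the bound at $p=\beta$ at the cost of a constant $C_\beta$. Write $F=f(X)$. Plug $g_\epsilon=(F^2+\epsilon^2)^{p/4}$ into the tensorized inequality, use the pointwise bound $|\nabla_i g_\epsilon|\le (p/2)(F^2+\epsilon^2)^{(p-2)/4}|\nabla_i F|$, apply H\"older's inequality, and send $\epsilon\downarrow 0$ to obtain
$$\Ent|F|^p \le \frac{D_{LS_\beta}\,p^2}{4}\,\|F\|_p^{p-2}\||\nabla F|_2\|_p^2 + \frac{D_{LS_\beta}\,p^\beta}{2^\beta}\,\|F\|_p^{p-\beta}\||\nabla F|_\beta\|_p^\beta.$$
Via the identity $\frac{d}{dq}\|F\|_q = \Ent|F|^q / (q^2\|F\|_q^{q-1})$ this becomes a differential inequality for $q\mapsto\|F\|_q$. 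Since $q\mapsto\||\nabla F|_r\|_q$ is nondecreasing, freeze both gradient norms at $q=p$. Treating the two terms on the right separately: multiplying by $\|F\|_q$ and integrating from $2$ to $p$ handles the first, giving $\|F\|_p^2\le\|F\|_2^2+CD_{LS_\beta}\,p\,\||\nabla F|_2\|_p^2$ and hence the $\sqrt{p}$ factor; multiplying by $\|F\|_q^{\beta-1}$ and integrating handles the second, producing $\|F\|_p^\beta\le\|F\|_2^\beta+C_\beta D_{LS_\beta}\,p^{\beta-1}\||\nabla F|_\beta\|_p^\beta$ whose $\beta$-th root is the desired $p^{1/\alpha}$ factor since $1/\alpha=(\beta-1)/\beta$. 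The initial value $\|F\|_2$ is controlled by the Poincar\'e inequality implied by \eqref{eq:modifiedLS} and absorbed into one of the two gradient terms.

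\textbf{Main obstacle.} The delicate point is the simultaneous treatment of the two terms in the differential inequality: they involve different negative powers of $\|F\|_q$ and cannot be unified by any single monomial substitution. A case split on which right-hand-side term dominates (essentially a nonlinear Gr\"onwall argument in the style of Bobkov-Ledoux) combines the two contributions additively, at the cost of a constant $C_\beta$.
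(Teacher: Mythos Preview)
Your overall architecture matches the paper's: tensorize \eqref{eq:modifiedLS} to the product, then run the Aida--Stroock argument by differentiating $t\mapsto(\E f^t)^{2/t}$ and applying the tensorized inequality to $f^{t/2}$. You and the paper arrive at the same differential inequality (writing $g(t)=\|f\|_t^2$),
\[
g'(t)\le a + b\,t^{\beta-2}g(t)^{(2-\beta)/2},\qquad a=\tfrac{D_{LS_\beta}}{2}\big\||\nabla f|_2\big\|_p^2,\quad b=\tfrac{D_{LS_\beta}}{2^{\beta-1}}\big\||\nabla f|_\beta\big\|_p^\beta.
\]

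The difference is precisely at your ``main obstacle''---how to integrate this when the two terms carry different powers of $g$. You propose treating them separately and then patching via a case split on which dominates, but leave this unexecuted. The paper dispenses with any case analysis: after the substitution $G=g^{\beta/2}$ one has $G'\le\tfrac{\beta}{2}(aG^{(\beta-2)/\beta}+bt^{\beta-2})$, and the explicit function
\[
H_\varepsilon(t)=\bigl(g(\beta)+a(t-\beta)+b^{2/\beta}t^{2-2/\beta}+\varepsilon\bigr)^{\beta/2}
\]
is checked in one line to be a supersolution with $H_\varepsilon(\beta)>G(\beta)$. Comparison and $\varepsilon\to0$ then give $g(p)\le g(\beta)+a(p-\beta)+b^{2/\beta}p^{2-2/\beta}$, which is exactly the additive combination of the two contributions. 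This is both shorter and sharper than the Gr\"onwall-type argument you sketch.

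One further point: your reduction of the range $2\le p\le\beta$ to the endpoint $p=\beta$ does not work as stated, because passing from $\||\nabla f|\|_\beta$ back down to $\||\nabla f|\|_p$ goes the wrong way. The paper handles $p\in[2,\beta]$ separately via the Poincar\'e inequality (implied by \eqref{eq:modifiedLS}), obtaining $\|f-\E f\|_p\le C D_{LS_\beta}^{1/2}p\,\||\nabla f|_2\|_p\le C\sqrt\beta\,D_{LS_\beta}^{1/2}\sqrt p\,\||\nabla f|_2\|_p$; this is the origin of the constant $C_\beta=C\sqrt\beta$.
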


In particular using the above theorem with $m = 1$ and $k = n$, we obtain the following

\begin{cor}If $X$ is a random vector in $\R^n$ which satisfies the $\beta$-modified log-Sobolev inequality \eqref{eq:modifiedLS}, then it satisfies \eqref{eq:Sobolev_gamma} with $\gamma = \frac{\beta-1}{\beta} \ge \frac{1}{2}$ and $L = C_\beta \max(D_{LS_\beta}^{1/2},D_{LS_\beta}^{1/\beta})$.
\end{cor}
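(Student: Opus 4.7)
The plan is to apply Theorem \ref{thm:AidaStroock} in the trivial product setting $m=1$, $k=n$, taking $Y := X$ itself. The hypothesis of the corollary is exactly the hypothesis of the theorem in this degenerate case, so the conclusion \eqref{ineq:sobolev-ineq-from-beta-modified} applies directly to $f$ and $X$, yielding
\[
\|f(X)-\E f(X)\|_p \le C_\beta D_{LS_\beta}^{1/2}\, p^{1/2}\,\big\||\nabla f(X)|_2\big\|_p + D_{LS_\beta}^{1/\beta}\, p^{1/\alpha}\,\big\||\nabla f(X)|_\beta\big\|_p
\]
for every $p\ge 2$.

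The first simplification is that when $m=1$ both mixed gradient norms collapse to the usual length of the gradient. Indeed, by the very definition, for any $r\ge 1$ one has $|\nabla f(x)|_r = (|\nabla_1 f(x)|^r)^{1/r} = |\nabla_1 f(x)| = |\nabla f(x)|$, so the factor $\big\||\nabla f(X)|\big\|_p$ can be pulled out of both terms on the right-hand side.

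The remaining task is arithmetic: merging the two prefactors into a single expression of the form $L p^\gamma$. Since $\beta\ge 2$ forces $\alpha = \beta/(\beta-1) \in (1,2]$, one has $1/\alpha \ge 1/2$, and because $p\ge 2\ge 1$ the bound $p^{1/2} \le p^{1/\alpha}$ holds. Thus the sum of the two prefactors is at most
\[
\bigl(C_\beta D_{LS_\beta}^{1/2} + D_{LS_\beta}^{1/\beta}\bigr) p^{1/\alpha} \le 2C_\beta \max\!\bigl(D_{LS_\beta}^{1/2}, D_{LS_\beta}^{1/\beta}\bigr)\, p^{1/\alpha}.
\]
Noting that $1/\alpha = (\beta-1)/\beta = \gamma$ yields exactly \eqref{eq:Sobolev_gamma} with the stated values of $L$ and $\gamma$.

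There is no real obstacle here: modulo Theorem \ref{thm:AidaStroock}, which is the genuine content, the corollary is a one-line specialization to the degenerate single-block product and a trivial consolidation of powers of $p$.
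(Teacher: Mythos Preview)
Your proof is correct and follows exactly the approach indicated in the paper, which simply states that the corollary is obtained from Theorem \ref{thm:AidaStroock} ``with $m=1$ and $k=n$''. You have spelled out the easy details (the collapse of $|\nabla f|_r$ to $|\nabla f|$ when $m=1$, and the domination $p^{1/2}\le p^{1/\alpha}$) that the paper leaves implicit.
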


We remark that  in the class of logarithmically concave random vectors, the $\beta$-modified log-Sobolev inequality is known to be equivalent to concentration for 1-Lipschitz functions of the form $\p(|f(X) - \E f(X)| \ge t) \le 2\exp(-c t^{\beta/(\beta-1)})$ \cite{MilmanProperties}.

\begin{proof}[Proof of Theorem \ref{thm:AidaStroock}] By the tensorization property of entropy  (see e.g. \cite{LedouxConcBook}, Proposition 5.6) we get
for all positive locally Lipschitz bounded functions $f \colon \R^{mk} \to \R$,
\begin{align}\label{eq:modified_LS_tensorized}
\Ent f^2(X) \le D_{LS_\beta}\Big(\E |\nabla f(X)|_2^2 + \sum_{i=1}^m \E \frac{|\nabla_i f(X)|^\beta}{f(X)^{\beta -2}}\Big).
\end{align}

Following~\cite{Aida-Stroock-1994}, consider now any locally Lipschitz bounded $f > 0$ and denote $F(t) = \E f(X)^t$. For $t > 2$,
\[
  F'(t) = \E \left( f(X)^t \log f(X) \right)
\]
and
\[ \begin{split}
  \frac{d}{dt} \left( \E f(X)^t \right)^{2/t} &= \frac{d}{dt} F(t)^{2/t}
    = F(t)^{2/t} \cdot \frac{d}{dt} \left( \frac{2}{t} \log F(t) \right) \\[1ex]
&= F(t)^{2/t} \left( \frac{2}{t} \frac{F'(t)}{F(t)} - \frac{2}{t^2} \log F(t) \right)
  = \frac{2}{t^2} F(t)^{\frac{2}{t} - 1} \left( t F'(t) - F(t) \log F(t) \right) \\[1ex]
&= \frac{2}{t^2} \left( \E f(X)^t \right)^{\frac{2}{t} - 1}
    \left( \E \left( f(X)^t \log f(X)^t \right) - \left(\E f(X)^t \right) \log \left(\E f(X)^t \right) \right).
\end{split} \]
By  \eqref{eq:modified_LS_tensorized}  applied to the function $g = f^{t/2} = \varphi \circ f $ where $\varphi(u) = |u|^{t/2}$,
\begin{displaymath}
  \frac{d}{dt} \left( \E f(X)^t \right)^{2/t} \le
  \frac{2}{t^2} \left( \E f(X)^t \right)^{\frac{2}{t} - 1} \cdot
   D_{LS_\beta} \Big(\E |\nabla (\varphi \circ f)(X)|_2^2 + \E |\nabla (\varphi\circ f)(X)|_\beta^\beta f(X)^{t(2-\beta)/2}\Big).
\end{displaymath}
By the chain rule and the H{\"o}lder inequality for the pair of conjugate exponents $t/2, t/(t-2)$,
\[ \begin{split}
  \E \left|\nabla (\varphi \circ f)(X)\right|_2^2 &= \E \big( \left|\varphi'(f(X)) \right| \cdot \left| \nabla f(X)\right|_2 \big)^2 \\[1ex]
    &\le \left( \E |\nabla f(X)|_2^t \right)^{2/t}
           \left( \E \left(\varphi'(f(X))\right)^{2t/(t-2)} \right)^{(t-2)/t} \\[1ex]
    &= \big\||\nabla f(X)|_2\big\|_t^2 \cdot \left( \frac{t^2}{4} \right) \left(\E f(X)^t \right)^{1 - \frac{2}{t}}.
\end{split} \]
Similarly, for $t \ge \beta$,
\begin{align*}
\E |\nabla(\varphi\circ f)(X)|_\beta^\beta f(X)^{t(2-\beta)/2} & = \frac{t^\beta}{2^\beta}\E  f(X)^{(t/2-1)\beta}|\nabla f(X)|_\beta^\beta f(X)^{t(2-\beta)/2}\\
& = \frac{t^\beta}{2^\beta} \E f(X)^{t-\beta} |\nabla f(X)|_\beta^\beta\\
&\le \frac{t^\beta}{2^\beta} (\E f(X)^t)^{1-\beta/t} (\E |\nabla f(X)|_\beta^t)^{\beta/t}\\
&= \frac{t^\beta}{2^\beta} (\E f(X)^t)^{1-\beta/t} \big\| |\nabla f(X)|_\beta\big\|_t^\beta.
\end{align*}
Thus we get for $\beta \le t \le p$,
\begin{displaymath}
\frac{d}{dt} \left( \E f(X)^t \right)^{2/t} \le \frac{D_{LS_\beta}}{2}\big\||\nabla f(X)|_2\big\|_p^2 + \frac{D_{LS_\beta}}{2^{\beta-1}}t^{\beta -2}(\E f(X)^t)^{(2-\beta)/t}\big\||\nabla f(X)|_\beta\big\|_p^\beta.
\end{displaymath}
Denote $ a = \frac{D_{LS_\beta}}{2}\big\||\nabla f(X)|_2\big\|_p^2$, $b = \frac{D_{LS_\beta}}{2^{\beta-1}}\big\||\nabla f(X)|_\beta\big\|_p^\beta$, $g(t) = \left( \E f(X)^t \right)^{2/t}$. The above inequality can be written as
\begin{displaymath}
g^{\beta/2-1}\frac{d}{dt} g \le g^{\beta/2 - 1} a + t^{\beta - 2} b
\end{displaymath}
for $t \in [\beta,p]$ or, denoting $G =  g^{\beta/2}$,
\begin{displaymath}
 \frac{d}{dt} G \le \frac{\beta}{2}(G^{(\beta -2)/\beta} a + t^{\beta-2}b).
\end{displaymath}
For $\varepsilon > 0$ consider now the function $H_\varepsilon(t) = (g(\beta) + a (t-\beta) + b^{2/\beta} t^{2 - 2/\beta}+\varepsilon)^{\beta/2}$. We have
\begin{displaymath}
H_\varepsilon(\beta) > G(\beta)
\end{displaymath}
and
\begin{align*}
\frac{d}{dt} H_\varepsilon(t) = \frac{\beta}{2} H_\varepsilon(t)^{(\beta-2)/\beta}
(a + (2-2/\beta)t^{1-2/\beta}b^{2/\beta})
\ge \frac{\beta}{2}(H_\varepsilon(t)^{(\beta-2)/\beta}a + t^{\beta -2}b),
\end{align*}
where we used the assumption $\beta \ge 2$. Using the last three inequalities together with the fact that for $t \ge 0$ the function $x \mapsto x^{(\beta-2)/2}a + t^{\beta-2}b$ is increasing on $[0,\infty)$ we obtain that $G(t) \le H_\varepsilon(t)$ for all $t \in [\beta,p]$, which by taking $\varepsilon \to 0^+$ implies that for $p \ge \beta$,
\begin{displaymath}
g(p) = G(p)^{2/\beta} \le H_0(p)^{2/\beta} \le g(\beta) + \frac{D_{LS_\beta}}{2}(p-\beta)\big\||\nabla f(X)|_2\big\|_p^2 + \frac{D_{LS_\beta}^{2/\beta}}{2} p^{2-2/\beta}\big\||\nabla f(X)|_\beta\big\|_p^2,
\end{displaymath}
i.e.,
\begin{equation}\label{eq:intermediate_ineq}
\|f(X)\|_p^2 \le \|f(X)\|_\beta^2 + \frac{D_{LS_\beta}}{2}(p-\beta)\big\||\nabla f(X)|_2\big\|_p^2 + \frac{D_{LS_\beta}^{2/\beta}}{2} p^{2-2/\beta}\big\||\nabla f(X)|_\beta\big\|_p^2.
\end{equation}

The above inequality has been proved so far for strictly positive, locally Lipschitz functions (the boundedness assumption can be easily removed by truncation and passage to the limit). For the case of a general locally Lipschitz function $f$, take any $\varepsilon>0$ and consider $\tilde{f} = |f| + \varepsilon$. Since $\tilde{f}$ is strictly positive and locally Lipschitz, the above inequality holds also for $\tilde{f}$. Taking $\varepsilon \to 0^+$, we can now extend \eqref{eq:intermediate_ineq} to arbitrary locally Lipschitz $f$.

Finally, assume $f \colon \R^{mk} \to \R$ is locally Lipschitz and $f(X)$ is integrable.
Applying~\eqref{eq:intermediate_ineq} to $f - \E f(X)$ instead of $f$ and taking the square root, we obtain
\begin{displaymath}
\|f(X) - \E f(X)\|_p \le \|f(X) - \E f(X)\|_\beta + \sqrt{D_{LS_\beta}(p-\beta)}\big\||\nabla f(X)|_2\big\|_p +
D_{LS_\beta}^{1/\beta} p^{1/\alpha}\big\||\nabla f(X)|_\beta\big\|_p
\end{displaymath}
for $p \ge \beta$. For $p \in [2, \beta]$, since \eqref{eq:modifiedLS} implies the Poincar\'e inequality with constant $D_{LS_\beta}/2$ (see Proposition 2.3. in \cite{Gentil-Guillin-Miclo-2005}), we get
\[
\|f(X) - \E f(X)\|_p \le C D_{LS_\beta}^{1/2} p\big\| |\nabla f(X)|_2\big\|_p
\]
(see the remark following \eqref{eq:Poincare}). These two estimates yield~\eqref{ineq:sobolev-ineq-from-beta-modified} with $C_\beta = C \sqrt{\beta}$.
\end{proof}

\subsection{Applications of Theorem \ref{thm:main_intro}}

Let us now present certain applications of estimates established in the previous section. For simplicity we will restrict to the basic setting presented in Theorem \ref{thm:main_intro}.

\subsubsection{Polynomials} A typical application of Theorem \ref{thm:main_intro} would be to obtain tail inequalities for multivariate polynomials in the random vector $X$. The constants involved in such estimates do not depend on the dimension, but only on the degree of the polynomial. As already mentioned in the introduction, our results in this setting can be considered a transference of inequalities by Lata{\l}a from the tetrahedral Gaussian case to the case of non-necessarily product random vectors and general polynomials.

\subsubsection{Additive functionals and related statistics}
We will now consider three related classes of additive statistics of a random vector, often arising in various problems.

\paragraph{Additive functionals}
Let $X$ be a random vector in $\R^n$ satisfying \eqref{eq:sobolev_def}. For a function $f\colon \R \to \R$ define the random variable
\begin{align}\label{eq:additive_def}
Z_f = f(X_1)+\ldots+f(X_n).
\end{align}

It is classical and follows from \eqref{eq:sobolev_def} by a simple application of the Chebyshev inequality that if $f$ is smooth with $\|f'\|_\infty \le \alpha$, then for all $t > 0$,
\begin{align}\label{eq:additive_D1}
\p\big(|Z_f - \E Z_f| \ge t\big) \le e^2\exp\Big(-\frac{t^2}{e^2 nL^2\alpha^2}\Big).
\end{align}

Using Theorem \ref{thm:main_intro} we can easily obtain inequalities which hold if $f$ is a polynomial-like function, i.e., if $\|f^{(D)}\|_\infty < \infty$ for some $D$. Note that the derivatives of the function $F(x_1,\ldots,x_n) = f(x_1)+\ldots+f(x_n)$ have a very simple diagonal form. In consequence, calculating their $\|\cdot\|_\mathcal{J}$ norms is simple. More precisely, we have
\begin{displaymath}
\D^d F(x) = {\rm diag}_d \Big(f\ub{d}(x_1),\ldots,f\ub{d}(x_n)\Big),
\end{displaymath}
where ${\rm diag}_d(x_1,\ldots,x_n)$ stands for the $d$-indexed matrix $(a_{\ii})_{\ii\in[n]^d}$ such that $a_\ii  = x_i$ if $i_1 = \ldots = i_d = i$ and $0$ otherwise. It is easy to see that if $\mathcal{J} = \{[d]\}$, then $\|{\rm diag}_d(x_1,\ldots,x_n)\|_\mathcal{J} = \sqrt{x_1^2+\ldots+x_n^2}$ and if $\# \mathcal{J} \ge 2$, then
$\|{\rm diag}_d(x_1,\ldots,x_n)\|_\mathcal{J} = \max_{i\le n}|x_i|$. Therefore we obtain the following corollary to Theorem \ref{thm:main_intro}. We will apply it in the next section to linear eigenvalue statistics of random matrices.
\begin{cor}\label{cor:additive_1}
Let $X$ be a random vector in $\R^n$ satisfying \eqref{eq:sobolev_def}, $f \colon \R \to \R$ a $\mathcal{C}^D$ function, such that $\|f^{(D)}\|_\infty < \infty$ and $Z_f$ is defined by \eqref{eq:additive_def}. Then for all $t > 0$,
\begin{align*}
\p(|Z_f - \E Z_f| \ge t) &\le 2\exp\Big(-\frac{1}{C_D}\min\Big(\frac{t^2}{L^{2D}n\|f^{(D)}\|_\infty^2},\frac{t^{2/D}}{L^2\|f^{(D)}\|_\infty^{2/D}}\Big)\Big)\\
&+2\exp\Big(-\frac{1}{C_D}\min_{1\le d\le D-1}\Big(\frac{t^2}{L^{2d} \sum_{i=1}^n (\E f^{(d)}(X_i))^2 }\Big)\Big)\\
&+ 2\exp\Big(-\frac{1}{C_D}\min_{2\le d \le D-1}\Big(\frac{t^{2/d}}{L^2 \max_{i\le n} |\E f^{(d)}(X_i)|^{2/d}}\Big)\Big).
\end{align*}
\end{cor}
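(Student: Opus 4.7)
The plan is to apply Theorem~\ref{thm:main_intro} to $F\colon \R^n \to \R$ defined by $F(x) = \sum_{i=1}^n f(x_i)$. Since $F$ depends on each coordinate separately, every mixed partial derivative $\partial_{i_1}\cdots\partial_{i_d}F$ vanishes unless all indices coincide, which identifies $\D^d F(x)$ with the super-diagonal tensor $\mathrm{diag}_d(f^{(d)}(x_1),\ldots,f^{(d)}(x_n))$.

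The first task is to evaluate the tensor-product norms $\|\mathrm{diag}_d(v)\|_\mathcal{J}$ for $v \in \R^n$ and $\mathcal{J}\in P_d$. For $\mathcal{J} = \{[d]\}$ the definition reduces to the Hilbert--Schmidt norm, and hence equals $\|v\|_2$. For $\mathcal{J}$ with $k := \#\mathcal{J}\ge 2$, only the diagonal entries $z^{(l)}_i := y^{(l)}_{(i,\ldots,i)}$ of the test tensors $y^{(l)}$ contribute to the sum in~\eqref{Gaussian_norm_def_intro}, and the constraint $\|y^{(l)}\|_2 \le 1$ reduces to $\|z^{(l)}\|_2 \le 1$. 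H\"older's inequality combined with $\|z\|_k \le \|z\|_2$ for $k \ge 2$ then gives
\[
\|\mathrm{diag}_d(v)\|_\mathcal{J} = \sup\Bigl\{\textstyle\sum_i v_i\, z^{(1)}_i\cdots z^{(k)}_i : \|z^{(l)}\|_2 \le 1\Bigr\} = \max_{i\le n}|v_i|,
\]
the supremum being attained at $z^{(l)} = e_{i^\ast}$ where $|v_{i^\ast}| = \max_i |v_i|$.

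Substituting these evaluations into the moment inequality of Theorem~\ref{thm:main_intro} and using $p \ge 2$ to absorb the factors $p^{k/2}$ with $2 \le k \le d$ into $p^{d/2}$ (at the price of a combinatorial constant $C_d$ bounded by the Bell number of $d$), I would reduce the moment bound to
\[
\|F(X)-\E F(X)\|_p \le C_D\Bigl( L^D p^{1/2}\sqrt{n}\|f^{(D)}\|_\infty + L^D p^{D/2}\|f^{(D)}\|_\infty + \sum_{d=1}^{D-1} L^d p^{1/2}\sqrt{\textstyle\sum_i(\E f^{(d)}(X_i))^2} + \sum_{d=2}^{D-1} L^d p^{d/2}\max_i|\E f^{(d)}(X_i)|\Bigr).
\]
Chebyshev's inequality $\p(|Y| \ge e\|Y\|_p) \le e^{-p}$, applied separately to each of the three natural groupings (the two $d = D$ terms jointly, the $\ell_2$-type terms with $1 \le d \le D-1$, and the $\ell_\infty$-type terms with $2 \le d \le D-1$), followed by optimization of $p$ within each group and a union bound, produces exactly the three exponentials stated in the corollary.

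The one combinatorial subtlety is verifying that intermediate partitions with $2 \le \#\mathcal{J} \le d-1$ do not give rise to additional exponential terms; a straightforward case analysis in the two regimes $t \ge A$ and $t \le A$, with $A = L^d \max_i |\E f^{(d)}(X_i)|$, shows that every such contribution is dominated either by the $\ell_2$-type term (using $\max_i|\E f^{(d)}(X_i)| \le \sqrt{\sum_i (\E f^{(d)}(X_i))^2}$ in the small-$t$ regime) or by the extreme $\#\mathcal{J} = d$ term already present in the corollary. The main obstacle, in other words, is purely bookkeeping; the analytic content lies entirely in Theorem~\ref{thm:main_intro}.
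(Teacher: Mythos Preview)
Your approach is essentially the same as the paper's: identify $\D^d F$ as a super-diagonal tensor, compute $\|\mathrm{diag}_d(v)\|_{\mathcal{J}}$ (which the paper states in the paragraph preceding the corollary as $\|v\|_2$ for $\#\mathcal{J}=1$ and $\max_i|v_i|$ for $\#\mathcal{J}\ge 2$), and then invoke Theorem~\ref{thm:main_intro}. The paper leaves the passage from $\eta_f(t)$ to the three displayed exponentials implicit; your absorption of the intermediate partitions into the $\#\mathcal{J}=d$ term at the moment level is the clean way to do this, and your final ``combinatorial subtlety'' paragraph is then redundant (though not wrong), while the phrase ``Chebyshev applied separately to each grouping'' should really read ``Chebyshev applied once, then $\exp(-\min)\le\sum\exp(-\cdot)$''.
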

Clearly  the case $D=1$ of the above corollary recovers up to constants \eqref{eq:additive_D1}. Moreover using the (yet unproven) Theorem \ref{thm:Gaussian_intro} one can see that for $f(x) = x^D$ and $X$ being a standard Gaussian vector in $\R^n$, the estimate of the corollary is optimal up to absolute constants (in this case, since $Z_f$ is a sum of independent random variables, one can also use estimates from \cite{HOMS}).
\paragraph{Additive functionals of partial sums}
Let us now consider a slightly more involved additive functional of the form
\begin{align}\label{eq:additive_def_2}
S_f = \sum_{i=1}^n f\Big(\sum_{j=1}^i X_j\Big).
\end{align}
Such random variables arise e.g., in the study of additive functionals of random walks (see e.g. \cite{SkorohodSlobodenjuk,BorodinIbragimov}).
For simplicity we will only discuss what can be obtained directly for Lipschitz functions $f$ and what Theorem \ref{thm:main_intro} gives for $f$ with bounded second derivative. Let  thus $F(x) = \sum_{i=1}^n f(\sum_{j=1}^i x_j)$. We have $\frac{\partial}{\partial x_i} F(x) = \sum_{l\ge i} f'(\sum_{j\le l} x_j)$.
Therefore
\begin{displaymath}
\big\||\nabla F|\big\|_\infty^2 = \|f'\|_\infty^2 \sum_{i=1}^n (n-i+1)^2 = \frac{1}{6} n(n+1)(2n+1) \|f'\|_\infty^2,
\end{displaymath}
which, when combined with \eqref{eq:sobolev_def} and Chebyshev's inequality yields
\begin{displaymath}
\p(|S_f - \E S_f| \ge t) \le 2\exp\Big(-\frac{t^2}{C L^2 n^3 \|f'\|_\infty^2}\Big).
\end{displaymath}

Now, let us assume that $f \in \mathcal{C}^2$ and $f''$ is bounded. We have
\begin{displaymath}
|\E \nabla F(X)|^2 = \sum_{i=1}^n\bigg(\sum_{l=i}^n \E f'\Big(\sum_{j=1}^l X_j\Big)\bigg)^2
\end{displaymath}
Moreover
\begin{displaymath}
\frac{\partial^2}{\partial x_i \partial x_j} F(x_1,\ldots,x_n) = \sum_{l \ge i \vee j}f''\Big(\sum_{k=1}^l x_k\Big)
\end{displaymath}
and thus
\begin{align*}
\|\D^2 F(x)\|_{\{1,2\}}^2= \sum_{i,j=1}^n \Big(\sum_{l = i \vee j}^n f''\Big(\sum_{k=1}^l x_k\Big)\Big)^2 \le 2\|f''\|_\infty^2
\sum_{i=1}^n\sum_{j=i}^n (n-j+1)^2 \le Cn^4\|f''\|_\infty^2.
\end{align*}
Since $\D^2 F$ is a symmetric bilinear form, we have
\begin{align*}
\|\D^2 F(x)\|_{\{1\}\{2\}} &\le \sup_{|\alpha| \le 1} \sum_{i,j=1}^n \sum_{l = i \vee j}^n\Big|f''\Big(\sum_{k=1}^l x_k\Big)\Big|\alpha_i\alpha_j\\
&\le \sup_{|\alpha| \le 1} \|f''\|_\infty \sum_{l=1}^n \big(\sum_{i \le l} \alpha_i\big)^2 \le
\sup_{|\alpha| \le 1} \|f''\|_\infty \sum_{l=1}^n l\sum_{i\le l} \alpha_i^2 \le Cn^2 \|f''\|_\infty.
\end{align*}
Using the above estimates and Theorem \ref{thm:main_intro} we obtain
\begin{align*}
\p(|S_f - \E S_f| \ge t) \le 2\exp\Big(-\frac{1}{C L^2}\min\Big(\frac{t^2}{\sum_{i=1}^n \big( \sum_{l=i}^n \E f'(\sum_{j=1}^l X_j) \big)^2 },\frac{t}{n^2\|f''\|_\infty}\Big)\Big).
\end{align*}
To effectively bound the sub-Gaussian coefficient in the above inequality one should use some additional information about the structure of the vector $X$. For a given function $f$ it is of order at most $n^5,$ but if, e.g., the function $f$ is even and $X$ is symmetric, it clearly vanishes. In this case we get
\begin{displaymath}
\p(|S_f - \E S_f| \ge t) \le 2\exp\Big(-\frac{1}{CL^2}\frac{t}{n^2\|f''\|_\infty}\Big).
\end{displaymath}
One can check that if for instance $X$ is a standard Gaussian vector in $\R^n$ and $f(x) = x^2$ then this estimate is tight up to the value of the constant $C$.

\paragraph{$U$-statistics} Our last application in this section will concern $U$-statistics (for simplicity of order 2) of the random vector $X$, i.e., random variables of the form
\begin{displaymath}
U = \sum_{i,j \le n, i\neq j} h_{ij}(X_i,X_j),
\end{displaymath}
where $h_{ij}\colon \R^2 \to \R$ are smooth functions. Without loss of generality let us assume that $h_{ij}(x,y) = h_{ji}(y,x)$.

A simple application of Chebyshev's inequality and \eqref{eq:sobolev_def} gives that if $\D h_{i,j}$ are uniformly bounded on $\R^2$ then for all $t > 0$,
\begin{align*}
\p(|U - \E U| \ge t) &\le 2 \exp\Big(-\frac{1}{C L^2}\frac{t^2}{\sum_{i=1}^n (\sum_{j\neq i} \frac{\partial}{\partial x}h_{ij}(x_i,x_j))^2}\Big) \\
&\le 2\exp\Big(-\frac{1}{C L^2}\frac{t^2}{n^3 \max_{i\neq j}\|\frac{\partial}{\partial x} h_{ij}\|_\infty^2}\Big).
\end{align*}

For $h_{ij}$ of class $\mathcal{C}^2$ with bounded derivatives of second order, a direct application of Theorem \ref{thm:main_intro} gives
\begin{align*}
\p(|U - \E U| \ge t) \le 2\exp\Big(-\frac{1}{C}\min\Big(\frac{t^2}{L^4 \alpha^2},\frac{t^2}{L^2 \beta^2},\frac{t}{L^2 \gamma}\Big)\Big),
\end{align*}
where
\begin{align*}
\alpha^2 & = \sup_{x\in \R^n} \Big\{\sum_{i,j\le n,i\neq j} \Big(\frac{\partial^2}{\partial x\partial y} h_{ij}(x_i,x_j)\Big)^2 + \sum_{i=1}^n\Big(\sum_{j\neq i} \frac{\partial^2}{\partial x^2} h_{ij}(x_i,x_j)\Big)^2\Big\}\\
&\le n^2\max_{i\neq j}\Big\|\frac{\partial^2}{\partial x\partial y} h_{ij}\Big\|_\infty + n^3\max_{i\neq j}\Big\|\frac{\partial^2}{\partial x^2} h_{ij}\Big\|_\infty,\\
\beta^2 & = \sum_{i=1}^n \Big(\sum_{j\neq i}\E \frac{\partial}{\partial x} h_{ij}(X_i,X_j)\Big)^2\le n^3 \max_{i\neq j} |\E \frac{\partial}{\partial x} h_{ij}(X_i,X_j)|^2,\\
\gamma & = \sup_{x \in \R^n} \sup_{|\alpha|,|\beta| \le 1} \Big\{\sum_{i,j\le n,i\neq j} \frac{\partial^2}{\partial x\partial y} h_{ij}(x_i,x_j)\alpha_i\beta_j + \sum_{i=1}^n\alpha_i\beta_i \sum_{j\neq i} \frac{\partial^2}{\partial x^2} h_{ij}(x_i,x_j)\Big\}\\
&\le n \Big(\max_{i\neq j}\Big\|\frac{\partial^2}{\partial x\partial y} h_{ij}\Big\|_\infty + \max_{i\neq j} \Big\|\frac{\partial^2}{\partial x^2} h_{ij}\Big\|_\infty\Big).
\end{align*}

In particular, if $h_{ij} = h$, a function with bounded derivatives of second order, we get $\alpha^2 = \mathcal{O}(n^3)$, $\beta^2 = \mathcal{O}(n^3)$, $\gamma = \mathcal{O}(n)$, which shows that the oscillations of $U$ are of order at most $\mathcal{O}(n^{3/2})$. In the case of $U$-statistics of independent random variables, generated by bounded $h$, this is a well known fact, corresponding to the CLT and classical Hoeffding inequalities for $U$-statistics. We remark that in the so called non-degenerate case, i.e. when $\Var (\E_X h(X,Y)) > 0$, $n^{3/2}$ is then indeed the right normalization in the CLT for $U$-statistics (see e.g. \cite{dlPg}).

\subsubsection{Linear statistics of eigenvalues of random matrices \label{subsubsec:matrices}}

We will now use Corollary \ref{cor:additive_1} to obtain tail inequalities for linear eigenvalue statistics of random Wigner matrices. We remark that one could also apply to the random matrix case the other inequalities considered in the previous section, obtaining in particular estimates on $U$-statistics of eigenvalues (which have been recently investigated by Lytova and Pastur \cite{LP}). We will focus on linear eigenvalues statistics (additive functionals in the language of the previous section) and obtain inequalities involving as a sub-Gaussian term a Sobolev norm of the function $f$ with respect to the semicircle law (the limiting spectral distribution for Wigner ensembles). We refer the reader to the monographs \cite{AGZ,BS,Mehta,PSbook} for basic facts concerning random matrices.

Consider thus a real symmetric $n\times n$ random matrix $A$ ($n \ge 2$) and let $\lambda_1\le \ldots\le \lambda_n$ be its eigenvalues. We will be interested in concentration inequalities for functionals of the form
\begin{displaymath}
Z = \sum_{i=1}^n f(\lambda_i/\sqrt{n}).
\end{displaymath}
In \cite{GZConc} Guionnet and Zeitouni obtained concentration inequalities for $Z$ with Lipschitz $f$ assuming that the entries of $A$ are independent and satisfy the log-Sobolev inequality with some constant $L$. More specifically, they prove that for all $t > 0$,
\begin{align*}
\p(|Z - \E Z| \ge t) \le 2\exp\Big(-\frac{t^2}{8L\|f'\|_\infty^2}\Big).
\end{align*}
(In fact they treat a more general case of banded matrices, but for simplicity we will focus on the basic case.)

As a corollary to Theorem \ref{thm:main_intro} we present below an inequality which compliments the above result. Our aim is to replace the strong parameter $\|f'\|_\infty$ controlling the
sub-Gaussian tail by a weaker Sobolev norm with respect to the semicircular law
\begin{displaymath}
d\rho(x) = \frac{1}{2\pi} \sqrt{4 - x^2} \Ind{(-2,2)}(x)\,dx.
\end{displaymath}
(recall that this is the limiting spectral distribution for Wigner matrices). Imposing additional smoothness assumptions on the function $f$ it can be done in a window $|t| \le c_f n$, where $c_f$ depends on $f$.

\begin{prop}\label{prop:linear-statistics}
Assume the entries of the matrix $A$ are independent (modulo symmetry conditions), mean zero and variance one random variables, satisfying the logarithmic Sobolev inequality~\eqref{eq_log_Sobolev} with constant $L^2$. If $f$ is $\mathcal{C}^2$ with bounded second derivative, then for all $t > 0$,
\begin{align}\label{ineq:linear-statistics-prop}
  \p(|Z - \E Z| \ge t) \le 2 \exp \left( - \frac{1}{C_L} \left(
    \frac{t^2}{\int_{-2}^2 f'^2 \, d\rho +  n^{-2/3} \norm{f''}_\infty^2} \land \frac{nt}{\norm{f''}_\infty} \right) \right). 
\end{align}
\end{prop}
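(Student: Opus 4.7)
My strategy is to apply the Sobolev-type inequality~\eqref{eq:sobolev_def} twice rather than invoke Theorem~\ref{thm:main_intro} with $D=2$ directly. A direct application of the latter would produce, in the sub-Gaussian regime, a contribution from $\|\D^2 F\|_{\{1,2\}}$ of order $\|f''\|_\infty$ (the Hilbert--Schmidt norm does not see that the roughly $n^2/2$ nonzero eigenvalues of the Hessian are each of size $\|f''\|_\infty/n$), which is too weak to yield the claimed $n^{-2/3}\|f''\|_\infty^2$ correction. The iterative scheme below bypasses the Hessian.

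Write $Z=F(X)$ where $X=(A_{ij})_{i\le j}\in\R^{n(n+1)/2}$ is the vector of independent matrix entries and $F(X)=\tr(g(A))$ with $g(x)=f(x/\sqrt n)$. By tensorization of~\eqref{eq_log_Sobolev} and Theorem~\ref{thm:AidaStroock}, $X$ satisfies~\eqref{eq:sobolev_def} with constant of order $L$. A direct derivative computation in the $X$-coordinates yields
\[
|\nabla F(X)|^2 \le 2\,\tr(g'(A)^2) =: 2H(A) = \frac{2}{n}\sum_{k=1}^{n} f'(\lambda_k/\sqrt n)^2.
\]
Applying~\eqref{eq:sobolev_def} to $F$ gives $\|F-\E F\|_p\le CL\sqrt p\,\|H\|_p^{1/2}$. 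Since $H(A)=\tr(h(A))$ with $h=g'^2$, repeating the gradient computation with $h$ in place of $g$ and using the operator inequality $g''(A)^2\preceq\|g''\|_\infty^2\,I$ yields $|\nabla H|^2\le 8\|g''\|_\infty^2 H$. A second application of~\eqref{eq:sobolev_def} gives $\|H-\E H\|_p\le CL\sqrt p\,\|g''\|_\infty\,\|H\|_p^{1/2}$, and the resulting quadratic inequality in $\|H\|_p^{1/2}$ implies $\|H\|_p^{1/2}\le C\sqrt{\E H}+CL\sqrt p\,\|g''\|_\infty$. Since $\|g''\|_\infty=\|f''\|_\infty/n$, we arrive at
\[
\|F-\E F\|_p \le CL\sqrt p\,\sqrt{\E H} + CL^2 p\,\frac{\|f''\|_\infty}{n}.
\]

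It remains to establish the random-matrix estimate
\[
\E H = \E\!\int\! f'^2\,d\mu_n \le \int_{-2}^{2} f'^2\,d\rho + C\,n^{-2/3}\|f''\|_\infty^2,
\]
where $\mu_n=\frac1n\sum_k\delta_{\lambda_k/\sqrt n}$ is the empirical spectral distribution. This is the main obstacle of the proof; the rate $n^{-2/3}$ is dictated by the edge fluctuations of the spectrum of a Wigner matrix with log-Sobolev entries. One can derive it either from known rigidity estimates for Wigner spectra, or by a Taylor expansion of $f'$ around the semicircle edges combined with Gaussian-type control on $\|A/\sqrt n\|_{\mathrm{op}}$ (which itself follows from the fact that the operator norm is a $1$-Lipschitz function of $X$ and so satisfies Gaussian concentration via~\eqref{eq:sobolev_def}). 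Once this estimate is in place, Chebyshev's inequality $\p(|Y|\ge e\|Y\|_p)\le e^{-p}$ applied at $p=\min\bigl(t^2/[CL^2\sigma^2],\,nt/[CL^2\|f''\|_\infty]\bigr)$ with $\sigma^2=\int_{-2}^{2} f'^2\,d\rho+n^{-2/3}\|f''\|_\infty^2$ produces exactly~\eqref{ineq:linear-statistics-prop}.
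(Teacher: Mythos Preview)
Your concentration argument is correct and takes a genuinely different path from the paper's. The paper does \emph{not} work in the entry coordinates: it first uses the Hoffman--Wielandt inequality to show that the map $\tilde A\mapsto(\lambda_1/\sqrt n,\ldots,\lambda_n/\sqrt n)$ is $\sqrt{2/n}$-Lipschitz, so the eigenvalue vector inherits the log-Sobolev inequality with constant $2L^2/n$, and then applies Corollary~\ref{cor:additive_1} (i.e.\ Theorem~\ref{thm:main_intro} with $D=2$) to the additive functional $\sum_i f(\lambda_i/\sqrt n)$. In eigenvalue coordinates the Hessian is diagonal and the $\|\D^2\|_{\{1,2\}}$ term is harmless; your claim that Theorem~\ref{thm:main_intro} is too weak is correct only in the entry coordinates. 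Your alternative---iterating \eqref{eq:sobolev_def} via the pointwise bound $|\nabla H|^2\le 8\|g''\|_\infty^2 H$---is a neat Bakry--\'Emery-style trick that bypasses Hoffman--Wielandt and yields the same moment estimate.

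There is, however, a genuine gap in the second half. The bound $\E H\le\int_{-2}^2 (f')^2\,d\rho+C_L\,n^{-2/3}\|f''\|_\infty^2$ is not a black-box fact; the paper devotes most of the proof to deriving it. The key external input is the Bobkov--G\"otze--Tikhomirov estimate $\sup_x|F_\mu(x)-F_\rho(x)|\le C_L n^{-2/3}$ on the Kolmogorov distance between the expected spectral distribution and the semicircle law, which is then combined with an integration-by-parts argument and sub-Gaussian control of the extreme eigenvalues to handle the unbounded support of $\mu$. Your second suggested approach---Taylor expansion of $f'$ near the edges plus Gaussian control on $\|A/\sqrt n\|_{\mathrm{op}}$---does not by itself produce the rate $n^{-2/3}$: operator-norm concentration only localises the spectrum in $[-C_L,C_L]$ and gives no quantitative comparison with $\rho$ in the bulk. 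Your first suggestion (eigenvalue rigidity) would yield the rate, but is a much deeper result than what is needed here; the right ingredient is the Bobkov--G\"otze--Tikhomirov bound, and converting it into the desired estimate on $\E H$ still requires the nontrivial argument carried out in \eqref{eq:bobkov-goetze-tikhomirov-int-by-parts}--\eqref{ineq:nu-f2}.
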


\paragraph{Remark}
The case $f(x) = x^2$ shows that under the assumptions of Proposition~\ref{prop:linear-statistics} one cannot expect a tail behaviour better than exponential for large $t$.
Indeed, since $Z = \frac{1}{n}(\lambda_1^2 + \ldots + \lambda_n^2) = \frac{1}{n}\sum_{i,j \le n} A_{ij}^2$,
even if $A$ is a matrix with standard Gaussian entries, then for all $t > 0$, $\p(|Z-\E Z| \ge t) > \frac{1}{C} \exp( -C (t^2 \land nt))$.

\paragraph{Remark}
A similar inequality to~\eqref{ineq:linear-statistics-prop} holds in the case of Hermitian matrices with independent entries as well. In the proof given below one should invoke an appropriate result concerning the speed of convergence of the spectral distribution of Wigner matrices to the semicircular law.

\begin{proof}
Let us identify the random matrix $A$ with a random vector $\tilde{A} = (A_{ij})_{1\le i\le j\le n}$ having values in $\R^{n(n+1)/2}$ endowed with the standard Euclidean norm $|\tilde{A}| = \left( \sum_{1 \le i \le j \le n} A_{ij}^2\right)^{1/2}$. Note that $\|A\|_{\textup{HS}} \le \sqrt{2} |\tilde{A}|$.
By independence of coordinates of $\tilde{A}$ and the tensorization property of the logarithmic Sobolev inequality (see, e.g., \cite[Corollary 5.7]{LedouxConcBook}), $\tilde{A}$ also satisfies~\eqref{eq_log_Sobolev} with constant $L^2$. Furthermore, by the Hoffman-Wielandt inequality (see, e.g., \cite[Lemma 2.1.19]{AGZ}) which asserts that if $B, C$ are two $n \times n$ real symmetric (or Hermitian) matrices and $\lambda_i(B), \lambda_i(C)$ resp. their eigenvalues arranged in nondecreasing order, then
\begin{displaymath}
\sum_{i=1}^n |\lambda_i(B) - \lambda_i(C)|^2 \le \|B - C\|_{\textup{HS}}^2,
\end{displaymath}
the map $\tilde{A} \mapsto (\lambda_1/\sqrt{n}, \ldots, \lambda_n/\sqrt{n}) \in \R^n$ is $\sqrt{2/n}$-Lipschitz. Therefore, the random vector $(\lambda_1/\sqrt{n}, \ldots, \lambda_n/\sqrt{n})$ satisfies~\eqref{eq_log_Sobolev} with constant $2L^2/n$. In consequence, by the results from~\cite{Aida-Stroock-1994} (see also Theorem~\ref{thm:AidaStroock}), $(\lambda_1/\sqrt{n}, \ldots, \lambda_n/\sqrt{n})$ also satisfies~\eqref{eq:sobolev_def} with constant $L/\sqrt{n}$. Applying Corollary~\ref{cor:additive_1} with $D=2$ we obtain
\begin{equation}\label{ineq:linear-statistics-d-2}
  \p(|Z - \E Z| \ge t) \le 2\exp \left(-\frac{1}{C L^2} \left( \frac{t^2}{n^{-1}\sum_{i=1}^n (\E f'(\lambda_i/\sqrt{n}))^2 + L^2 n^{-1} \norm{f''}^2_\infty} \land \frac{n t}{\norm{f''}_\infty}\right) \right).
\end{equation}

In what follows we shall estimate from above the term $n^{-1}\sum_{i=1}^n (\E f'(\lambda_i/\sqrt{n}))^2$ from~\eqref{ineq:linear-statistics-d-2}. First, by Jensen's inequality
\begin{equation}\label{ineq:rm-sobolev-norm-1}
\frac{1}{n}\sum_{i=1}^n (\E f'(\lambda_i/\sqrt{n}))^2 \le \E \left(\frac{1}{n}\sum_{i=1}^n f'(\lambda_i/\sqrt{n})^2 \right) = \int_\R (f')^2 d\mu,
\end{equation}
where $\mu$ is the expected spectral measure of the matrix $n^{-1/2}A$. According to Wigner's theorem, for a fixed $f$, $\mu$ converges to the semicircular law as $n \to \infty$ and thus $\int_\R (f')^2 \, d\mu \to \int_{-2}^2 (f')^2 \, d\rho$. A non-asymptotic bound on the term $\int_\R f'^2 \, d\mu$ can be obtained using the result of Bobkov, G{\"o}tze and Tikhomirov~\cite{Bobkov-Goetze-Tikhomirov-2010} on the speed of convergence of the expected spectral distribution of real Wigner matrices to the semicircular law. Since each entry of $A$ satisfies the logarithmic Sobolev inequality with constant $L^2$, it also satisfies the Poincar{\'e} inequality with the same constant (see e.g.~\cite[Chapter 5]{LedouxConcBook}). Therefore Theorem 1.1 from~\cite{Bobkov-Goetze-Tikhomirov-2010} gives
\begin{equation}\label{ineq:bgt-kolmogorov-distance}
  \sup_{x \in \R} |F_\mu(x) - F_\rho(x)| \le C_L n^{-2/3},
\end{equation}
where $F_\mu$ and $F_\rho$ are the distribution functions of $\mu$ and $\rho$, respectively.

The decay of $1-F_\mu(x)$ and $F_\mu(x)$ as $x \to \infty$ and $x \to -\infty$ (resp.) can be obtained using the sub-Gaussian concentration of $\lambda_n/\sqrt{n}$ and $\lambda_1/\sqrt{n}$, which is, e.g., a consequence of~\eqref{eq:sobolev_def} for the vector of eigenvalues of $n^{-1/2} A$. For example, for any $t \ge 0$,
\begin{align}\label{ineq:decay-of-F-1}
  \p\left(\frac{\lambda_n}{\sqrt{n}} \ge \E \frac{\lambda_n}{\sqrt{n}} + t \right) &\le 2 \exp\left(- \frac{1}{C} \frac{n t^2}{L^2} \right).
\end{align}
Using the classical technique of $\delta$-nets for estimating the operator norm of a matrix (see e.g.~\cite{PisierVolume}) and the fact that the entries of $A$ are sub-Gaussian (as they satisfy the logarithmic Sobolev inequality) one gets
$\E \lambda_n \le \E \|A\|_{\text{op}} \le C L \sqrt{n}$, which together with~\eqref{ineq:decay-of-F-1} yields
\begin{equation}\label{ineq:decay-of-F}
  1 - F_\mu(CL + t) \le \p\left(\frac{\lambda_n}{\sqrt{n}} \ge CL + t \right) \le 2 \exp\left(- \frac{1}{C} \frac{n t^2}{L^2} \right)
\end{equation}
for all $t \ge 0$. Clearly, the same inequality holds for $F(-CL - t)$.
Integrating by parts,
\begin{equation}\label{eq:bobkov-goetze-tikhomirov-int-by-parts}
  \int_\R f'^2 \, d\mu = \int_\R f'^2 \, d\rho + \int_\R \left( f'(x)^2 \right)' (F_\rho(x) - F_\mu(x)) \, dx.
\end{equation}
Combining the uniform estimate~\eqref{ineq:bgt-kolmogorov-distance} with~\eqref{ineq:decay-of-F} and using an elementary inequality $2 x y \le x^2 + y^2$, we estimate the last integral in~\eqref{eq:bobkov-goetze-tikhomirov-int-by-parts} as follows:
\begin{multline}\label{ineq:bgt-error-term}
  \left| \int_\R \left(f'(x)^2 \right)' (F_\mu(x) - F_\rho(x)) \, dx \right| \\[1ex]
\le \int_\R \left| 2 f'(x) f''(x) \right| \left( \norm{F_\mu - F_\rho}_\infty \land 2\exp\left(-\frac{n}{C} \frac{ \text{dist}(x, [-CL, CL])^2}{L^2} \right) \right) \, dx \\[1ex]
\le \int_\R f'(x)^2 \, d\nu(x) + \nu(\R) \norm{f''}_\infty^2,
\end{multline}
where
\[
  d\nu(x) = C_L n^{-2/3} \land 2\exp\left(-\frac{\text{dist}(x, [-CL, CL])^2}{2 \sigma^2}\right) \, dx, \qquad \text{and} \qquad \sigma^2 = \frac{C L^2}{2n}.
\]

We proceed to estimate the two last terms from~\eqref{ineq:bgt-error-term}.
Take $r > 0$ such that
\begin{align}\label{eq:on-r-1}
  2 e^{-r^2/(2\sigma^2)} = C_L n^{-2/3}
\end{align}
or put $r=0$ if no such $r$ exists. Note that if we assume $C_L \ge 1$, as we obviously can, then
\begin{align}\label{eq:on-r-2}
  r \le C L n^{-1/2} \sqrt{\log n}.
\end{align}
We shall need the following estimates, which are easy consequences of the standard estimate for a Gaussian tail:
\begin{align}\label{ineq:gaussian-tail-1}
  \int_r^\infty e^{-y^2/(2\sigma^2)} \, dy \le C \sigma e^{-r^2/(2\sigma^2)} \le C_L \sigma n^{-2/3} \le C_L n^{-7/6},
\end{align}
and
\begin{equation}\label{ineq:gaussian-tail-2}
  \begin{split}
  \int_r^\infty y^2 e^{-y^2/(2\sigma^2)} \, dy &\le \left( \int_0^\infty y^4 e^{-y^2/(2\sigma^2)} \, dy \right)^{1/2}
  \left( \int_r^\infty e^{-y^2/(2\sigma^2)} \, dy \right)^{1/2} \\[1ex]
  &\le C_L \sigma^{5/2} (\sigma n^{-2/3})^{1/2} \le C_L n^{-11/6}.
  \end{split}
\end{equation}
Now, \eqref{eq:on-r-1}, \eqref{eq:on-r-2} and \eqref{ineq:gaussian-tail-1} yield
\begin{align}\label{ineq:nu-R}
  \nu(\R) \le (CL + r) C_L n^{-2/3} + 4\int_r^\infty e^{-y^2/(2\sigma^2)} \, dy \le C_L n^{-2/3}.
\end{align}
We shall also need the estimate for $\int_\R x^2 \, d\nu(x)$ which follows from~\eqref{eq:on-r-1}, \eqref{eq:on-r-2} and~\eqref{ineq:gaussian-tail-2}:
\begin{align}\label{ineq:nu-x2}
  \int_\R x^2 \, d\nu(x) = \frac23 (CL+r)^3 C_L n^{-2/3} + 4 \int_r^\infty (CL+y)^2 e^{-y^2/(2\sigma^2)} \, dy \le C_L n^{-2/3}.
\end{align}

In order to estimate $\int_\R f'^2 \, d\nu$, take any $x_0 \in [-2,2]$ such that $|f'(x_0)|^2 \le \int_{-2}^2 f'^2 \, d\rho$, and use $|f'(x)| \le |f'(x_0)| + |x-x_0| \norm{f''}_\infty$ to obtain
\begin{align*}
  \int_\R f'(x)^2 \, d\nu(x) &\le 2 \Big( \int_{-2}^2 f'^2 \, d\rho \Big) \nu(\R) + 2\norm{f''}^2_\infty \int_\R |x-x_0|^2 \, d\nu(x) \\[1ex]
  &\le 2 \Big( \int_{-2}^2 f'^2 \, d\rho \Big) \nu(\R) + 4 \norm{f''}^2_\infty x_0^2 \nu(\R) + 4\norm{f''}^2_\infty \int_\R x^2 \, d\nu(x).
\end{align*}
Plugging \eqref{ineq:nu-R} and \eqref{ineq:nu-x2} into the above yields
\begin{align}\label{ineq:nu-f2}
  \int_\R f'(x)^2 \, d\nu(x) \le C_L n^{-2/3} \left( \int_{-2}^2 f'^2 \, d\rho + \norm{f''}_\infty^2 \right).
\end{align}
In turn, plugging~\eqref{ineq:nu-R} and~\eqref{ineq:nu-f2} into~\eqref{ineq:bgt-error-term} and then combining with~\eqref{eq:bobkov-goetze-tikhomirov-int-by-parts} we finally get
\[
  \int_\R f'^2 \, d\mu \le
    (1 + C_L n^{-2/3}) \int_\R f'^2 \, d\rho + C_L n^{-2/3} \norm{f''}_\infty
\]
which combined with~\eqref{ineq:linear-statistics-d-2} and \eqref{ineq:rm-sobolev-norm-1} completes the proof.
\end{proof}

\paragraph{Remark} With some more work (using truncations or working directly on moments) one can extend the above proposition to the case, when $|f''(x)| \le a(1+|x|^k)$ for some non-negative integer $k$ and $a \in \R$. In this case we obtain
\[
  \p\big(|Z-\E Z| \ge t\big)
  \le 2 \exp\left(-\left(\frac{t^2}{C_L \int_{-2}^2 f'^2 \,d\rho + C_{L,k} n^{-2/3} a^2} \land \frac{n}{C_{L,k}} \left(\frac{t}{a}\right)^{\frac{2}{k+2}} \right) \right).
\]
We also remark that to obtain the inequality \eqref{ineq:linear-statistics-d-2} one does not have to use independence of the entries of $A$, it is enough to assume that the vector $\tilde{A}$ satisfies the inequality \eqref{eq:sobolev_def}.

\section{Two-sided estimates of moments for Gaussian polynomials}\label{sec:Gaussian}
We will now prove Theorem \ref{thm:Gaussian_intro}, showing that in the case of general polynomials in Gaussian variables, the estimates of Theorem \ref{thm:main_intro} are optimal (up to constants depending only on the degree of the polynomial). In the special case of tetrahedral polynomials this follows from Lata{\l}a's Theorem \ref{thm:Latala_intro} and the following result by Kwapie\'n.

\begin{theorem}[Kwapie\'n, \cite{KwaDec}]\label{thm_Kwapien}
If $X = (X_1,\ldots,X_n)$ where $X_i$ are independent symmetric random variables, $Q$ is a multivariate tetrahedral polynomial of degree $D$ with coefficients in a Banach space $E$ and $Q_d$ is its homogeneous part of degree $d$, then for any symmetric convex function $\Phi \colon E \to \R_+$ and any $d \in \{0,1, \ldots, D\}$,
\begin{displaymath}
\E\Phi(Q_d(X)) \le \E\Phi(C_d Q(X)). 
\end{displaymath}
\end{theorem}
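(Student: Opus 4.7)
The plan is to reduce the assertion to a comparison for Banach-space-valued tetrahedral Rademacher chaos via symmetrization, and then exploit moment-equivalence estimates for such chaos together with Borell-type tail bounds to extend the inequality to an arbitrary symmetric convex $\Phi$.

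First, I would introduce independent Rademacher signs $\varepsilon_1,\ldots,\varepsilon_n$ independent of $X$. By independence and symmetry of each $X_i$, the vector $(\varepsilon_i X_i)_i$ has the same distribution as $(X_i)_i$, so $Q_d(X)\stackrel{d}{=}Q_d(\varepsilon X)$ and $Q(X)\stackrel{d}{=}Q(\varepsilon X)$. For fixed $X$, set
\[
  R(\varepsilon):=Q(\varepsilon_1 X_1,\ldots,\varepsilon_n X_n)=\sum_{S}(a_S X_S)\,\varepsilon_S,
\]
which is a tetrahedral Rademacher polynomial of degree $\le D$ with coefficients $a_S X_S\in E$; its degree-$d$ homogeneous part in $\varepsilon$ equals $Q_d(\varepsilon X)$. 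Fubini then reduces the theorem to the following statement about Rademacher chaos: for every $E$-valued tetrahedral Rademacher polynomial $R$ of degree $\le D$ and every symmetric convex $\Phi\colon E\to\R_+$,
\[
  \E_\varepsilon\Phi(R_d(\varepsilon))\le\E_\varepsilon\Phi(C_d R(\varepsilon)).
\]

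Next, for this Rademacher comparison I would combine two classical ingredients. The first is Borell/Bonami--Beckner hypercontractivity, extended to the Banach-space-valued setting (due to Borell and Kwapień--Szulga): for any tetrahedral $E$-valued Rademacher polynomial of degree at most $D$, all $L^p(E)$-norms are mutually equivalent with constants $K_{D,p,q}$ depending only on $D$, $p$ and $q$. The second is a projection bound $\|R_d\|_{L^p(E)}\le C_{D,p}\|R\|_{L^p(E)}$. In the scalar case this is the orthogonality of Walsh monomials; in the Banach-valued setting I would derive it through a decoupling-style identity that expresses $R_d$ as a signed average of $R$ evaluated on products of independent Rademacher sequences, to which Jensen's inequality and the contraction principle can be applied with constants independent of $n$.

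Finally, to upgrade the moment bound to arbitrary symmetric convex $\Phi$, I would invoke the Borell-type tail bound $\p(\|R\|_E>t\|R\|_{L^1(E)})\le C_D\exp(-(t/C_D)^{2/D})$ valid for tetrahedral chaos of degree $\le D$, and the analogous bound for $R_d$. Combined with the moment comparison of the previous step, this yields the tail domination $\p(\|R_d\|_E>t)\le C_D\,\p(\|R\|_E>t/C_D)$. After reducing the general symmetric convex $\Phi$ on $E$ to a symmetric convex function of $\|\cdot\|_E$ via the Hahn--Banach supremum and applying the layer-cake formula $\E\Phi(Y)=\int_0^\infty \Phi'(t)\p(\|Y\|_E>t)\,dt$, the tail bound translates into $\E\Phi(R_d)\le\E\Phi(C_d R)$, as required.

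The main obstacle is establishing the Banach-valued projection estimate with constants depending only on $D$ and not on the number $n$ of Rademacher inputs. The naive identity $R_d=\E_\tau[e_d(\tau)R(\tau\varepsilon)]$, where $e_d$ is the elementary symmetric polynomial in a uniform sign pattern $\tau$, fails to give a useful Jensen bound because $\|e_d\|_{L^1}$ grows polynomially with $n$. The decoupling route sketched above circumvents this issue by replacing dependent Walsh monomials $\varepsilon_S$ by products of independent Rademacher sequences, on which Jensen's inequality and the contraction principle operate with $n$-independent constants; pinning down the exact signs and weights of the resulting combinatorial identity is the technical heart of the argument.
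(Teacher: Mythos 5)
Your symmetrization step is fine, but the argument has two genuine gaps, and the second one is fatal to the statement as formulated. First, the entire route through hypercontractivity and Borell-type tail bounds can at best yield a comparison of the \emph{norm tails}, $\p(\|R_d\|_E>t)\le C_D\,\p(\|R\|_E>t/C_D)$. A general symmetric convex $\Phi\colon E\to\R_+$ is not a function of $\|\cdot\|_E$ (consider $\Phi(x)=|\langle x^*,x\rangle|$ for a fixed functional $x^*$), so the proposed ``reduction via the Hahn--Banach supremum and the layer-cake formula'' does not exist: tail domination of norms does not imply $\E\Phi(R_d)\le\E\Phi(C_dR)$ for arbitrary symmetric convex $\Phi$. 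The theorem is a convex-domination statement, strictly stronger than tail domination, and no amount of moment/tail juggling for $\|\cdot\|_E$ recovers it. Second, the projection bound $\|R_d\|_{L^p(E)}\le C_{D,p}\|R\|_{L^p(E)}$, which you correctly identify as the technical heart, is left unproven; you only gesture at a ``decoupling-style identity'' whose signs and weights you do not pin down.

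The missing identity is exactly what makes the whole detour unnecessary. For $|t|\le1$ let $\eta_1,\ldots,\eta_n$ be i.i.d.\ signs with $\E\eta_i=t$, independent of $\varepsilon$; since $(\varepsilon_i\eta_i)_i$ is again a Rademacher sequence and $R$ is tetrahedral, $\E_\eta R(\varepsilon_1\eta_1,\ldots,\varepsilon_n\eta_n)=\sum_{k}t^kR_k(\varepsilon)=:T_tR(\varepsilon)$, so Jensen gives $\E\Phi(T_tR)\le\E\Phi(R)$ for \emph{every} convex $\Phi$, with no dependence on $n$. Since $t\mapsto T_tR(\varepsilon)$ is an $E$-valued polynomial of degree $\le D$ in $t$, Lagrange interpolation at $D+1$ fixed nodes in $[-1,1]$ writes $R_d=\sum_j\lambda_jT_{t_j}R$ with $\sum_j|\lambda_j|\le C_d$ depending only on $d$ and $D$; convexity and symmetry of $\Phi$ then give $\E\Phi(R_d)\le\max_j\E\Phi\bigl(T_{t_j}(C_dR)\bigr)\le\E\Phi(C_dR)$ directly, for arbitrary symmetric convex $\Phi$ and arbitrary Banach space $E$. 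This is Kwapie\'n's argument (the paper only cites \cite{KwaDec} and does not reprove it); it needs neither hypercontractivity nor tail estimates.
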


Indeed, when combined with Theorem \ref{thm:Latala_intro} and the triangle inequality, the above theorem gives the following

\begin{cor} \label{cor_tetrahedral} Let
\begin{displaymath}
Z = \sum_{0\le d \le D} \sum_{\ii \in [n]^d} a\ub{d}_\ii  g_{i_1}\cdots g_{i_d},
\end{displaymath}
where $A_d = (a\ub{d}_\ii)_{\ii\in [n]^d}$ is a $d$-indexed symmetric matrix of real numbers such that $a_\ii = 0$ if $i_j = i_l$ for some $k\neq l$ (we adopt the convention that for $d=0$ we have a single number $a\ub{0}_\emptyset$). Then for any $p\ge 2$,
\begin{displaymath}
C_D^{-1}\sum_{0\le d\le D} \sum_{\mathcal{J} \in P_d} p^{\#\mathcal{J}/2} \|A_d\|_\mathcal{J} \le \|Z\|_p \le C_D \sum_{0\le d\le D} \sum_{\mathcal{J} \in P_d} p^{\#\mathcal{J}/2} \|A_d\|_\mathcal{J}.
\end{displaymath}
\end{cor}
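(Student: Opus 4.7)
The plan is to obtain the two bounds separately, using Theorem~\ref{thm:Latala_intro} to handle each homogeneous component and Theorem~\ref{thm_Kwapien} to separate the components inside an $L^p$ norm. Split $Z = \sum_{d=0}^D Q_d$, where $Q_d = \sum_{\ii \in [n]^d} a^{(d)}_\ii g_{i_1}\cdots g_{i_d}$ is the tetrahedral homogeneous Gaussian chaos of order $d$ associated to $A_d$. For $d=0$ this is just the constant $a^{(0)}_\emptyset$, and we adopt the convention that $P_0$ contains only the empty partition (of cardinality $0$), so the $d=0$ term in the claimed bound is simply $|a^{(0)}_\emptyset|$.

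For the upper bound, apply the triangle inequality in $L^p$ to write $\|Z\|_p \le \sum_{d=0}^D \|Q_d\|_p$. The $d=0$ contribution is $|a^{(0)}_\emptyset|$, and for each $d \ge 1$ Theorem~\ref{thm:Latala_intro} (together with $\|A_d\|_\mathcal{J}$ defined via \eqref{Gaussian_norm_def_intro}) gives
\begin{equation*}
\|Q_d\|_p \le C_d \sum_{\mathcal{J} \in P_d} p^{\#\mathcal{J}/2} \|A_d\|_\mathcal{J}.
\end{equation*}
Summing over $d$ and absorbing the finite $(D{+}1)$-fold dependence on $d$ into the constant yields the upper bound with some $C_D$.

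For the lower bound, apply Theorem~\ref{thm_Kwapien} with $E = \R$ and the symmetric convex function $\Phi(y) = |y|^p$ to the real-valued tetrahedral polynomial $Z$. This gives $\|Q_d\|_p \le C_d \|Z\|_p$ for every $d \in \{0,1,\ldots,D\}$ (the $d=0$ case is actually just Jensen's inequality, since $\E(Z - a^{(0)}_\emptyset) = 0$). Combining this with the lower bound in Theorem~\ref{thm:Latala_intro} (and the trivial bound $\|Q_0\|_p = |a^{(0)}_\emptyset|$ for $d=0$) we obtain, for every $d$,
\begin{equation*}
\|Z\|_p \ge C_d^{-1}\|Q_d\|_p \ge C_d^{-1} \sum_{\mathcal{J} \in P_d} p^{\#\mathcal{J}/2} \|A_d\|_\mathcal{J}.
\end{equation*}
Taking the maximum over $d \in \{0,\ldots,D\}$ and using $\max \ge (D+1)^{-1}\sum$ to pass from a maximum to a sum (again absorbing the factor into $C_D$) gives the desired lower bound.

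The only mildly delicate point is making sure Kwapień's theorem is applied in the right form: one must view $Z$ itself as the multivariate tetrahedral polynomial $Q$ of degree $D$ and extract its $d$-th homogeneous component $Q_d$, so that $\Phi(y) = |y|^p$ (symmetric and convex) yields the moment comparison $\|Q_d\|_p \le C_d \|Z\|_p$. Everything else is bookkeeping: the upper bound is an immediate triangle inequality, and the lower bound is obtained componentwise and then combined by the cheap $\max$-to-sum estimate, with the constants depending only on $D$.
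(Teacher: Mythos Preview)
Your proof is correct and follows exactly the approach indicated in the paper: the upper bound via the triangle inequality and Theorem~\ref{thm:Latala_intro}, the lower bound via Theorem~\ref{thm_Kwapien} (applied with $\Phi(y)=|y|^p$) together with the lower estimate in Theorem~\ref{thm:Latala_intro}. The paper does not spell out the details beyond the sentence preceding the corollary, and your write-up fills them in accurately.
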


The strategy of proof of Theorem \ref{thm:Gaussian_intro} is very simple and relies on infinite divisibility of Gaussian random vectors, which will help us approximate the law of a general polynomial in Gaussian variables by the law of a tetrahedral polynomial, for which we will use Corollary \ref{cor_tetrahedral}.

\medskip
It will be convenient to have the polynomial $f$ represented as a combination of multivariate Hermite polynomials:
\begin{equation}\label{eq:f-as-Hermite}
  f(x_1, \ldots, x_n) = \sum_{d=0}^D \sum_{\dd \in \Delta_d^n} a_\dd h_{d_1}(x_1) \cdots h_{d_n}(x_n),
\end{equation}
where
\[
  \Delta_d^n = \{ \dd = (d_1, \ldots, d_n) \colon \forall_{k \in [n]}\ d_k \ge 0 \text{ and } d_1 + \cdots + d_n = d \}
\]
and $h_d(x) = (-1)^d e^{x^2/2} \frac{d^n}{dx^n} e^{-x^2/2}$ is the $d$-th Hermite polynomial.

Let $(W_t)_{t \in [0,1]}$ be a standard Brownian motion. Consider standard Gaussian random variables $g = W_1$ and, for any positive integer $N$,
\[
  g_{j,N} = \sqrt{N} (W_{\frac{j}{N}} - W_{\frac{j-1}{N}}), \quad j = 1, \ldots, N.
\]
For any $d \ge 0$, we have the following representation of $h_d(g) = h_d(W_1)$ as a multiple stochastic integral
(see~\cite[Example 7.12 and Theorem 3.21]{JansonGHS}),
\[
  h_d(g) = d! \int_0^1 \! \int_0^{t_d} \! \cdots \! \int_0^{t_2} \, dW_{t_1} \cdots dW_{t_{d-1}} dW_{t_d}.
\]
Approximating the multiple stochastic integral leads to
\begin{equation}\label{eq:Hermite-as-tetrahedral-polynomial}
  \begin{split}
  h_d(g) &= d! \lim_{N \to \infty} N^{-d/2} \sum_{1 \le j_1 < \cdots < j_d \le N} g_{j_1, N} \cdots g_{j_d, N} \\[1ex]
         &= \lim_{N \to \infty} N^{-d/2} \sum_{\jj \in [N]\uu{d}} g_{j_1, N} \cdots g_{j_d, N},
\end{split}
\end{equation}
where the limit is in $L^2(\Omega)$ (see \cite[Theorem 7.3. and formula (7.9)]{JansonGHS}) and actually the convergence holds in any $L^p$ (see~\cite[Theorem 3.50]{JansonGHS}). We remark that instead of multiple stochastic integrals with respect to the Wiener process we could use the CLT for canonical $U$-statistics (see \cite[Chapter 4.2]{dlPg}), however the stochastic integral framework seems more convenient as it allows to put all the auxiliary variables on the same probability space.

Now, consider $n$ independent copies $(W_t\ub{i})_{t \in [0,1]}$ of the Brownian motion ($i=1, \ldots, n$) together with the corresponding Gaussian random variables: $g\ub{i} = W_1\ub{i}$ and, for $N \ge 1$,
\[
  g_{j, N}\ub{i} = \sqrt{N} (W_{\frac{j}{N}}\ub{i} - W_{\frac{j-1}{N}}\ub{i}), \quad j = 1, \ldots, N.
\]
In the lemma below we state the representation of a multivariate Hermite polynomial in the variables $g\ub{1}, \ldots, g\ub{n}$ as a limit of tetrahedral polynomials in the variables $g_{j, N}\ub{i}$. To this end introduce some more notation. Let
\[
  G\ub{n, N} = (g_{1,N}\ub{1}, \ldots, g_{N,N}\ub{1}, \ g_{1,N}\ub{2}, \ldots, g_{N,N}\ub{2}, \ \ldots,\  g_{1,N}\ub{n}, \ldots, g_{N,N}\ub{n}) = (g_{j,N}\ub{i})_{(i,j)\in [n]\times [N]}
\]
be a Gaussian vector with $n \times N$ coordinates. We identify here the set $[nN]$ with $[n]\times [N]$ via the bijection $(i,j) \leftrightarrow (i-1)N+j$. We will also identify the sets $([n]\times [N])^d$ and $[n]^d\times [N]^d$ in a natural way. For $d \ge 0$ and $\dd \in \Delta_d^n$, let
\[
  I_{\dd} = \big\{ \ii \in [n]^d \colon \forall_{l \in [n]} \, \# \ii^{-1}(\{l\}) = d_l \big\},
\]
and define a $d$-indexed matrix $B_{\dd}\ub{N}$ of $n^d$ blocks each of size $N^d$ as follows: for $\ii \in [n]^d$ and $\jj \in [N]^d$,
\[
  \big(B_{\dd}\ub{N}\big)_{(\ii, \jj)} = \begin{cases}
    \frac{d_1! \cdots d_n!}{d!} N^{-d/2} & \text{if $\ii \in I_{\dd}$ and $(\ii, \jj) := \big((i_1, j_1), \ldots, (i_d, j_d)\big) \in ([n] \times [N])\uu{d},$}
    \\[1ex]
    0 & \text{otherwise.}
  \end{cases}
\]
\begin{lemma}\label{lemma:multivariate-Hermite-as-tetrahedral-polynomial}
With the above notation, for any $p > 0$,
\[
  \big\langle B_{\dd}\ub{N}, (G\ub{n,N})^{\otimes d} \big\rangle \stackrel[N \to \infty]{}{\longrightarrow} h_{d_1}(g\ub{1}) \cdots h_{d_n}(g\ub{n}) \quad \text{in $L^p(\Omega)$}.
\]
\end{lemma}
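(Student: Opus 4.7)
My strategy is to expand $\langle B_{\dd}\ub{N}, (G\ub{n,N})^{\otimes d}\rangle$ explicitly, observe that the combinatorial structure of $B_\dd\ub{N}$ forces the expression to factorize over the $n$ independent Brownian motions, and then appeal to the one-variable stochastic-integral approximation \eqref{eq:Hermite-as-tetrahedral-polynomial} together with an independence argument.

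\textbf{Step 1: expand and factorize the sum.} Writing out the inner product and using the support condition on $B_\dd\ub{N}$, I obtain
\[
\langle B_{\dd}\ub{N}, (G\ub{n,N})^{\otimes d}\rangle
= \frac{d_1!\cdots d_n!}{d!}\, N^{-d/2}
\sum_{\ii\in I_{\dd}}\ \sum_{\jj\,:\,(\ii,\jj)\in([n]\times[N])\uu{d}}
\prod_{k=1}^{d} g\ub{i_k}_{j_k,N}.
\]
For a fixed $\ii\in I_\dd$ let $A_l(\ii)=\{k\in[d]:i_k=l\}$, so that $\#A_l(\ii)=d_l$. The distinctness condition $(\ii,\jj)\in([n]\times[N])\uu{d}$ is equivalent to requiring, for each $l$ separately, that the sub-tuple $\jj_{A_l(\ii)}$ has pairwise distinct entries. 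Hence, re-indexing within each block,
\[
\sum_{\jj\,:\,(\ii,\jj)\in([n]\times[N])\uu{d}} \prod_{k=1}^{d} g\ub{i_k}_{j_k,N}
= \prod_{l=1}^{n}\ \sum_{\ss\in[N]\uu{d_l}}\prod_{r=1}^{d_l} g\ub{l}_{s_r,N},
\]
which is the same for every $\ii\in I_\dd$. Since $\#I_\dd = d!/(d_1!\cdots d_n!)$, the two multinomial factors cancel and $N^{-d/2}=\prod_l N^{-d_l/2}$, giving
\[
\langle B_{\dd}\ub{N}, (G\ub{n,N})^{\otimes d}\rangle
= \prod_{l=1}^{n} Y\ub{l}_N,\qquad
Y\ub{l}_N := N^{-d_l/2}\sum_{\ss\in[N]\uu{d_l}}\prod_{r=1}^{d_l} g\ub{l}_{s_r,N}.
\]

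\textbf{Step 2: pass to the limit factor by factor.} For each fixed $l$, the random variable $Y\ub{l}_N$ is exactly the tetrahedral polynomial appearing on the right-hand side of \eqref{eq:Hermite-as-tetrahedral-polynomial}, built from the Gaussian increments of the $l$-th Brownian motion $W\ub{l}$. Hence \eqref{eq:Hermite-as-tetrahedral-polynomial} (which the authors have already noted holds in every $L^q(\Omega)$ by hypercontractivity, cf.\ \cite[Thm.\ 3.50]{JansonGHS}) yields $Y\ub{l}_N\to h_{d_l}(g\ub{l})$ in $L^q(\Omega)$ for every $q\in[1,\infty)$.

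\textbf{Step 3: convergence of the product.} Since the variables $Y\ub{1}_N,\ldots,Y\ub{n}_N$ are independent (they are measurable functions of independent Brownian motions), and since each $Y\ub{l}_N$ converges to $h_{d_l}(g\ub{l})$ in $L^{np}(\Omega)$, a routine telescoping together with H\"older's inequality gives $\prod_l Y\ub{l}_N \to \prod_l h_{d_l}(g\ub{l})$ in $L^p(\Omega)$; the uniform $L^q$-bounds needed for this are provided by hypercontractivity applied again to each $Y\ub{l}_N$. Combined with Step 1, this is exactly the claim of the lemma. No step is truly hard here: the only point requiring care is keeping track of the combinatorial identification between $\#I_\dd$ and the prefactor $d_1!\cdots d_n!/d!$ in the definition of $B_\dd\ub{N}$, which is what makes the sum split cleanly into a product over $l$.
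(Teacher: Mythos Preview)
Your proof is correct and follows essentially the same route as the paper's: both arguments rest on the combinatorial identity that the inner sum over $\jj$ with $(\ii,\jj)\in([n]\times[N])\uu{d}$ factorizes over the level sets of $\ii$, is independent of $\ii\in I_\dd$, and that $\#I_\dd=d!/(d_1!\cdots d_n!)$ cancels the prefactor in $B_\dd\ub{N}$. The paper runs the computation in the opposite direction (starting from the product $\prod_l h_{d_l}(g\ub{l})$ and re-indexing to recognize $\langle B_\dd\ub{N},(G\ub{n,N})^{\otimes d}\rangle$), and leaves the ``limit of products $=$ product of limits'' step implicit; your Step~3 makes this explicit via telescoping, H\"older, and independence, which is a welcome clarification but not a different idea.
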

\begin{proof}
Using~\eqref{eq:Hermite-as-tetrahedral-polynomial} for each $h_{d_i}(g\ub{i})$,
\begin{multline*}
  h_{d_1}(g\ub{1}) \cdots h_{d_n}(g\ub{n}) \\
  = \lim_{N \to \infty} N^{-d/2} \sum_{\substack{(j_1^{(1)}, \ldots, j_{d_1}^{(1)}) \in [N]^{\underline{d_1}}
\\ \vdots \\
(j_1^{(n)}, \ldots, j_{d_n}^{(n)}) \in [N]^{\underline{d_n}} }}
 \big( g_{j_1\ub{1},N}\ub{1} \cdots g_{j_{d_1}\ub{1},N}\ub{1} \big) \cdots
 \big( g_{j_1\ub{n},N}\ub{n} \cdots g_{j_{d_n}\ub{n},N}\ub{n} \big).
\end{multline*}
For each $N$, the right-hand side equals
\begin{displaymath}
  \frac{1}{\# I_{\dd}} N^{-d/2} \sum_{\ii \in I_{\dd}} \sum_{\substack{\jj \in [N]^d \text{ s.t.} \\ (\ii, \jj) \in ([n]\times[N])\uu{d}}} g_{j_1, N}\ub{i_1} \cdots g_{j_d, N}\ub{i_d} =
  \big\langle B_{\dd}\ub{N}, (G\ub{n,N})^{\otimes d} \big\rangle,
\end{displaymath}
since $\# I_{\dd} = \frac{d!}{d_1! \cdots d_n!}$.
\end{proof}
Note that $B_\dd\ub{N}$ is symmetric, i.e., for any $\ii \in [n]^d$, $\jj \in [N]^d$ if $\pi \colon [d] \to [d]$ is a permutation and $\ii' \in [n]^d$, $\jj' \in [N]^d$ are such that $\forall_{k \in [d]} \; i'_k = i_{\pi(k)}$ and $j'_k = j_{\pi(k)}$, then
\[
  \big( B_\dd\ub{N} \big)_{(\ii',\jj')} =  \big( B_\dd\ub{N} \big)_{(\ii,\jj)}.
\]
Moreover, $B_\dd\ub{N}$ has zeros on ``generalized diagonals'', i.e., $\big( B_\dd\ub{N} \big)_{(\ii,\jj)} = 0$ if $(i_k, j_k) = (i_l, j_l)$ for some $k \neq l$.

\begin{proof}[Proof of Theorem \ref{thm:Gaussian_intro}]Let us first note that it is enough to prove the moment estimates, the tail bound follows from them by the Paley-Zygmund inequality (see e.g. the proof of Corollary 1 in \cite{L2}). Moreover, the upper bound on moments follows directly from Theorem~\ref{thm:main_intro}. For the lower bound we use Lemma~\ref{lemma:multivariate-Hermite-as-tetrahedral-polynomial} to approximate the $L^p$ norm of $f(G)-\E f(G)$ with that of a tetrahedral polynomial, for which we can use the lower bound from Corollary~\ref{cor_tetrahedral}.

Assuming $f$ is of the form~\eqref{eq:f-as-Hermite}, Lemma~\ref{lemma:multivariate-Hermite-as-tetrahedral-polynomial} together with the triangle inequality implies
\[
  \lim_{N \to \infty} \Big\|\sum_{d=1}^D \Big\langle \sum_{\dd \in \Delta_d^n} a_\dd B_\dd\ub{N}, \big(G\ub{n,N}\big)^{\otimes d} \Big\rangle \Big\|_p = \big\|f(G) - \E f(G)\big\|_p
\]
for any $p > 0$, where $G = (g\ub{1}, \ldots, g\ub{n} )$. It therefore remains to relate
$\big\|\sum_{\dd} a_\dd B_\dd\ub{N}\big\|_{\mathcal{J}}$ with $\norm{\E \D^d f(G)}_{\mathcal{J}}$ for any $d \ge 1$ and $\mathcal{J} \in P_d$. In fact we shall prove that
\begin{equation}\label{eq:description-via-diff}
  \lim_{N \to \infty} \Big\|\sum_{\dd \in \Delta_d^n} a_\dd B_\dd\ub{N}\Big\|_{\mathcal{J}} = \frac{1}{d!} \norm{\E \D^d f(G)}_{\mathcal{J}},
\end{equation}
which will end the proof.

Fix $d \ge 1$ and $\mathcal{J} \in P_d$. For any $\dd \in \Delta_d^n$ define a symmetric $d$-indexed matrix $(b_\dd)_{\ii \in [n]^d}$ as
\[
  (b_\dd)_\ii = \begin{cases}
    \frac{d_1! \cdots d_n!}{d!} & \text{if $\ii \in I_\dd,$} \\
    0 & \text{otherwise.}
  \end{cases}
\]
and a symmetric $d$-indexed matrix $(\tilde{B}_\dd\ub{N})_{(\ii, \jj) \in ([n] \times [N])^d}$ as
\[
  (\tilde{B}_\dd\ub{N})_{(\ii, \jj)} = N^{-d/2} (b_\dd)_\ii \quad \text{for all $\ii \in [n]^d$ and $\jj \in [N]^d.$}
\]
It is a simple observation that
\begin{equation}\label{eq:blown-matrices}
  \Big\| \sum_{\dd \in \Delta_d^n} a_\dd \tilde{B}_\dd\ub{N} \Big\|_\mathcal{J} = \Big\| \sum_{\dd \in \Delta_d^n} a_\dd  (b_\dd)_{\ii \in [n]^d} \Big\|_\mathcal{J}.
\end{equation}
On the other hand, for any $\dd \in \Delta_d^n$, the matrices $\tilde{B}_\dd\ub{N}$ and $B_\dd\ub{N}$ differ at no more than $\# I_\dd \cdot \#([N]^d \setminus [N]\uu{d})$ entries. More precisely, if $\mathcal{J}_0 = \{ [d] \}$ (a trivial partition of $[d]$ into one set), then
\[
  \big\| \tilde{B}_\dd\ub{N} - B_\dd\ub{N} \big\|_\mathcal{J}^2 \le \big\| \tilde{B}_\dd\ub{N} - B_\dd\ub{N} \big\|_{\mathcal{J}_0}^2 \le \frac{d_1! \cdots d_n!}{d!} N^{-d} (N^d - N\uu{d}) \longrightarrow 0 \quad \text{as $N \to \infty$}.
\]
Thus the triangle inequality for the $\|\cdot\|_\mathcal{J}$ norm together with~\eqref{eq:blown-matrices} yields
\begin{equation}\label{eq:B-and-b}
  \lim_{N \to \infty} \Big\|\sum_{\dd \in \Delta_d^n} a_\dd B_\dd\ub{N}\Big\|_{\mathcal{J}} = \Big\| \sum_{\dd \in \Delta_d^n} a_\dd  (b_\dd)_{\ii \in [n]^d} \Big\|_\mathcal{J}.
\end{equation}

Finally, note that
\begin{equation}\label{eq:Df-and-b}
  \E \D^d f(G) = d! \sum_{\dd \in \Delta_d^n} a_\dd  (b_\dd)_{\ii \in [n]^d}.
\end{equation}
Indeed, using the identity on Hermite polynomials, $h_k'(x) = k h_{k-1}(x)$ ($k \ge 1$), we obtain $\E h_k\ub{l}(g) = k! \delta_{k,l}$ for $k,l\ge 0$, where $f\ub{l}$ stands for the $l$-th derivative of $f$, and thus, for any $\dd \in \Delta_d^n$,
\[
  \big(\E \D^d h_{d_1}(g\ub{1}) \cdots h_{d_n}(g\ub{n})\big)_\ii = d! (b_\dd)_\ii \quad \text{for each $\ii \in [n]^d$}.
\]
Now, \eqref{eq:Df-and-b} follows by linearity. Combining it with~\eqref{eq:B-and-b} proves~\eqref{eq:description-via-diff}.
\end{proof}

\paragraph{Remark} Note that the above infinite-divisibility argument can be also used to prove the upper bound on moments in Theorem \ref{thm:Gaussian_intro} (giving a proof independent of the one relying on Theorem \ref{thm:main_intro}).

\section{Polynomials in independent sub-Gaussian random variables}\label{sec:subgaussian}

In this section we prove Theorem \ref{thm:subgaussian_intro}. Before we proceed with the core of the proof we will need to introduce some auxiliary inequalities for the norms $\|\cdot\|_\mathcal{J}$ as well as some additional notation.

\subsection{Properties of $\|\cdot\|_\mathcal{J}$ norms}
The first inequality we will need is pretty standard and given in the following lemma  (it is a direct consequence of the definition of the norms $\|\cdot\|_{\mathcal{J}}$).

\begin{lemma}\label{lem_tensor_product}
For any $d$-indexed matrix $A = (a_\ii)_{\ii \in [n]^d}$ and any vectors $v_1,\ldots,v_d \in \R^n$ we have for all $\mathcal{J} \in P_d$,
\begin{displaymath}
\|A\circ \otimes_{i=1}^d v_i\|_\mathcal{J} \le \|A\|_\mathcal{J}\prod_{i=1}^d \|v_i\|_\infty
\end{displaymath}
\end{lemma}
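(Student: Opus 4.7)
The plan is to unpack the definition of $\|\cdot\|_\mathcal{J}$ on the left-hand side and absorb the tensor factor $\otimes_{i=1}^d v_i$ into the test tensors that realise the supremum.

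Concretely, let $\mathcal{J} = \{J_1,\ldots,J_k\}$ and start from
\[
\|A\circ \otimes_{i=1}^d v_i\|_\mathcal{J}
= \sup\Big\{\sum_{\ii\in [n]^d} a_\ii \Big(\prod_{j=1}^d v_{j,i_j}\Big)\prod_{l=1}^k x\ub{l}_{\ii_{J_l}} \colon \|(x\ub{l}_{\ii_{J_l}})\|_2\le 1,\ 1\le l\le k\Big\}.
\]
The key observation is that the product $\prod_{j=1}^d v_{j,i_j}$ factorises across the blocks of $\mathcal{J}$, namely
\[
\prod_{j=1}^d v_{j,i_j} = \prod_{l=1}^k \Big(\prod_{j\in J_l} v_{j,i_j}\Big),
\]
and for each $l$ the factor $\prod_{j\in J_l} v_{j,i_j}$ depends only on $\ii_{J_l}$. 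Hence, given test tensors $(x\ub{l}_{\ii_{J_l}})_l$ with $\|x\ub{l}\|_2\le 1$, I would set
\[
y\ub{l}_{\ii_{J_l}} := x\ub{l}_{\ii_{J_l}}\,\prod_{j\in J_l} v_{j,i_j},
\]
so that the sum inside the supremum becomes $\sum_\ii a_\ii \prod_{l=1}^k y\ub{l}_{\ii_{J_l}}$.

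The next step is an $\ell_\infty$–$\ell_2$ estimate for each $y\ub{l}$: since each factor $v_{j,i_j}$ for $j \in J_l$ is bounded in absolute value by $\|v_j\|_\infty$,
\[
\|y\ub{l}\|_2^2 = \sum_{\ii_{J_l}} \Big(\prod_{j\in J_l} v_{j,i_j}\Big)^2 (x\ub{l}_{\ii_{J_l}})^2 \le \Big(\prod_{j\in J_l} \|v_j\|_\infty\Big)^2 \|x\ub{l}\|_2^2 \le \Big(\prod_{j\in J_l} \|v_j\|_\infty\Big)^2.
\]
If all $\|v_j\|_\infty > 0$, normalising $z\ub{l} := y\ub{l}/\prod_{j\in J_l}\|v_j\|_\infty$ yields a valid family of test tensors for $\|A\|_\mathcal{J}$, and
\[
\sum_\ii a_\ii \prod_{j=1}^d v_{j,i_j} \prod_{l=1}^k x\ub{l}_{\ii_{J_l}} = \Big(\prod_{j=1}^d \|v_j\|_\infty\Big) \sum_\ii a_\ii \prod_{l=1}^k z\ub{l}_{\ii_{J_l}} \le \Big(\prod_{j=1}^d \|v_j\|_\infty\Big)\|A\|_\mathcal{J},
\]
using that the product over $l$ of $\prod_{j\in J_l}\|v_j\|_\infty$ equals $\prod_{j=1}^d\|v_j\|_\infty$. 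The case where some $\|v_j\|_\infty = 0$ is trivial since then both sides vanish. Taking the supremum over all admissible $(x\ub{l})$ concludes the proof.

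There is no real obstacle here — the argument is essentially a bookkeeping exercise on the partition, with the only mild care being to verify that the blockwise factorisation of $\otimes_j v_j$ matches the blockwise structure of the $\|\cdot\|_\mathcal{J}$ norm; once this matching is made explicit, the $\ell_\infty$-bound on entries of each $v_j$ immediately controls the $\ell_2$-norm of the modified test tensors.
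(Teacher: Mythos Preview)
Your argument is correct and is precisely the direct verification from the definition that the paper has in mind; in fact the paper does not spell out a proof at all, simply noting that the lemma ``is a direct consequence of the definition of the norms $\|\cdot\|_\mathcal{J}$''. Your write-up makes this direct consequence explicit and there is nothing to add.
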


To formulate subsequent inequalities we need some auxiliary notation concerning $d$-indexed matrices. We will treat matrices as functions from $[n]^d$ into the real line, which in particular allows us to use the notation of indicator functions and for a set $C \subseteq \{1,\ldots,n\}^d$ write $\Ind{C}$ for the matrix $(a_\ii)$ such that $a_\ii = 1$ if $\ii\in C$ and $0$ otherwise.

 Note that for $\#\mathcal{J} > 1$, $\|\cdot\|_\mathcal{J}$ is not unconditional in the standard basis, i.e., in general it is not true that $\|A\circ\Ind{C}\|_\mathcal{J} \le \|A\|_\mathcal{J}$. One situation in which this inequality holds is when $C$ is of the form $C = \{\ii\colon i_{k_1} = j_1,\ldots,i_{k_l} = j_l\}$ for some $1 \le k_1<\ldots<k_l \le d$ and $j_1,\ldots,j_l \in [n]$ (which follows from Lemma \ref{lem_tensor_product}). This corresponds to setting to zero all coefficients which are outside a ``generalized row'' of a matrix and leaving the coefficients in this row intact.

Later we will need another inequality of this type, which will allow us to select a ``generalized diagonal'' of a matrix. The corresponding estimate is given in the following

\begin{lemma}\label{lem_diagonal_selection}
Let $A = (a_\ii)_{\ii\in [n]^d}$ be a $d$-indexed matrix and let $C \subseteq [n]^d$ be of the form $C = \{\ii\colon i_k = i_l \;\textrm{for}  \; k,l \in K\}$, with $K \subseteq [d]$. Then for every $\mathcal{J} \in P_d$, $\|A\circ \Ind{C}\|_\mathcal{J} \le \|A\|_\mathcal{J}$.
\end{lemma}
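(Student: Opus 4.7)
The plan is to reduce, via the supremum definition of $\|\cdot\|_{\mathcal{J}}$, to showing that for any unit tensors $x^{(l)}$ (indexed by $\ii_{J_l}$, with $\|x^{(l)}\|_2 \le 1$), the ``masked'' sum $\sum_{\ii} a_\ii \mathbf{1}_{\{\ii\in C\}} \prod_l x^{(l)}_{\ii_{J_l}}$ is at most $\|A\|_{\mathcal{J}}$. We may assume $\#K \ge 2$ (otherwise $C = [n]^d$ and the bound is trivial). Let $L_K = \{l : K \cap J_l \neq \emptyset\}$ and, for each $l \in L_K$, fix an arbitrary $k_l^{\ast} \in K \cap J_l$.

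First I would perform a within-block reduction. The indicator factorises as
\[
\mathbf{1}_{\{\ii\in C\}} \;=\; \mathbf{1}_{\{i_{k_l^{\ast}} = i_{k_{l'}^{\ast}} \, \forall\, l,l' \in L_K\}} \cdot \prod_{l \in L_K} \mathbf{1}_{\{i_k = i_{k_l^{\ast}} \, \forall\, k \in K \cap J_l\}},
\]
so I can absorb each within-block indicator into $x^{(l)}$, replacing it by $\tilde{x}^{(l)}_{\ii_{J_l}} = x^{(l)}_{\ii_{J_l}} \mathbf{1}_{\{i_k = i_{k_l^{\ast}} \, \forall\, k \in K\cap J_l\}}$ (and $\tilde{x}^{(l)} = x^{(l)}$ for $l \notin L_K$). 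Clearly $\|\tilde{x}^{(l)}\|_2 \le 1$. If $\#L_K = 1$ the remaining cross-block indicator is vacuous, so the definition of $\|A\|_{\mathcal{J}}$ applied to the $\tilde{x}^{(l)}$ immediately gives the bound.

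In the remaining case $\#L_K \ge 2$, I would expand the cross-block indicator as $\sum_{i_0 \in [n]} \prod_{l \in L_K} \mathbf{1}_{\{i_{k_l^{\ast}} = i_0\}}$ and define $\tilde{x}^{(l, i_0)}_{\ii_{J_l}} = \tilde{x}^{(l)}_{\ii_{J_l}} \mathbf{1}_{\{i_{k_l^{\ast}} = i_0\}}$ for $l \in L_K$ and $\tilde{x}^{(l, i_0)} = \tilde{x}^{(l)}$ otherwise. Then
\[
\sum_{\ii} a_\ii \mathbf{1}_{\{\ii\in C\}} \prod_l x^{(l)}_{\ii_{J_l}} \;=\; \sum_{i_0 \in [n]} \sum_{\ii} a_\ii \prod_l \tilde{x}^{(l, i_0)}_{\ii_{J_l}} \;\le\; \|A\|_{\mathcal{J}} \sum_{i_0} \prod_l \|\tilde{x}^{(l, i_0)}\|_2,
\]
where the last inequality applies the definition of $\|A\|_{\mathcal{J}}$ for each fixed $i_0$.

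The key observation, which is also the main obstacle, is that the naive estimate $\sum_{i_0}\prod_l \|\tilde{x}^{(l,i_0)}\|_2 \le n$ is far too crude; what saves us is precisely that $\#L_K \ge 2$. For each $l \in L_K$ the supports of $\tilde{x}^{(l, i_0)}$ are disjoint over $i_0$, so $\sum_{i_0} \|\tilde{x}^{(l, i_0)}\|_2^2 = \|\tilde{x}^{(l)}\|_2^2 \le 1$. Picking two distinct $l_1, l_2 \in L_K$, bounding every other factor ($l \notin \{l_1,l_2\}$) by $1$, and applying the Cauchy--Schwarz inequality in $i_0$ gives
\[
\sum_{i_0} \prod_l \|\tilde{x}^{(l, i_0)}\|_2 \;\le\; \sum_{i_0} \|\tilde{x}^{(l_1, i_0)}\|_2 \|\tilde{x}^{(l_2, i_0)}\|_2 \;\le\; \Bigl(\sum_{i_0}\|\tilde{x}^{(l_1,i_0)}\|_2^2\Bigr)^{1/2}\Bigl(\sum_{i_0}\|\tilde{x}^{(l_2,i_0)}\|_2^2\Bigr)^{1/2} \le 1,
\]
which closes the argument. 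Thus the proof splits naturally into within-block absorption (trivial since $\|\cdot\|_{\mathcal{J}}$ is unconditional on generalised rows by Lemma~\ref{lem_tensor_product}) and a cross-block Cauchy--Schwarz step that exploits $\#L_K \ge 2$.
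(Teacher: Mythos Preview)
Your proof is correct. The core ideas coincide with the paper's: absorb the within-block part of the indicator into the corresponding $x^{(l)}$ (which only zeros out coordinates and hence does not increase the $\ell_2$ norm), and handle the cross-block part via an orthogonal decomposition over the common value $i_0$.

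The organizational difference is that the paper first reduces to $\#K=2$ using $\Ind{C_1\cap C_2}=\Ind{C_1}\circ\Ind{C_2}$, and then treats two cases separately: when $k,l$ lie in the same block of $\mathcal{J}$ (your within-block absorption), and when they lie in different blocks, where the paper phrases the argument as ``block-diagonal restriction of a linear operator does not increase its norm''. Your explicit Cauchy--Schwarz over $i_0$ is exactly the unwinding of that operator-norm fact, and by keeping $K$ general you avoid the preliminary reduction and the case split. The trade-off is that the paper's version makes the two phenomena (same block vs.\ different blocks) more visibly distinct, while yours is a single streamlined computation.
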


\begin{proof}
Since $\Ind{C_1\cap C_2} = \Ind{C_1} \circ \Ind{C_2}$, it is enough to consider the case $\#K = 2$, i.e. $C = \{\ii\colon i_k = i_l\}$ for some $1\le k<l\le d$.
Let $\mathcal{J} = \{J_1,\ldots,J_m\}$. We will consider two cases.

\paragraph{1.} The numbers $k$ and $l$ are separated by the partition $\mathcal{J}$. Without loss of generality we can assume that $k\in J_1$, $l\in J_2$. Then
\begin{align}\label{eq_diag_selection}
&\|A\circ \Ind{C}\|_\mathcal{J} \\
&= \sup_{\|x\ub{j}_{\ii_{J_j}}\|_2\le 1\colon  j \ge 3}\Big(\sup_{\|x\ub{1}_{\ii_{J_1}}\|_2,\|x\ub{2}_{\ii_{J_2}}\|_2\le 1} \sum_{|\ii_{J_1}|\le n}\sum_{|\ii_{J_2}|\le n}\ind{i_k=i_l}\Big(\sum_{|\ii_{(J_1\cup J_2)^c}|\le  n} a_\ii x\ub{3}_{\ii_{J_3}}\cdots x\ub{m}_{\ii_{J_m}}\Big)x\ub{1}_{\ii_{J_1}}x\ub{2}_{\ii_{J_2}}\Big).\nonumber
\end{align}

For any $x\ub{3}_{\ii_{J_3}},\ldots,x\ub{m}_{\ii_{J_m}}$, consider the matrix
\begin{displaymath}
B_{\ii_{J_1},\ii_{J_2}} = \Big(\sum_{|\ii_{(J_1\cup J_2)^c}|\le  n} a_\ii x\ub{3}_{\ii_{J_3}}\cdots x\ub{m}_{\ii_{J_m}}\Big)_{\ii_{J_1},\ii_{J_2}}
\end{displaymath}
acting from $\ell_2([n]^{J_1})$ to $\ell_2([n]^{J_2})$.

For fixed $x\ub{3}_{\ii_{J_3}},\ldots,x\ub{m}_{\ii_{J_m}}$ the inner expression on the right hand side of (\ref{eq_diag_selection}) is the operator norm of the block-diagonal matrix obtained from $B_{\ii_{J_1},\ii_{J_2}}$ by setting to zero entries in off-diagonal blocks. Therefore it is not greater than the operator norm of $B_{\ii_{J_1},\ii_{J_2}}$, which allows us to write
\begin{align*}
\|A\circ \Ind{C}\|_\mathcal{J} &\le \sup_{\|x\ub{j}_{\ii_{J_j}}\|_2\le 1\colon  j \ge 3}\Big(\sup_{\|x\ub{1}_{\ii_{J_1}}\|_2,\|x\ub{2}_{\ii_{J_2}}\|_2\le 1} \sum_{|\ii_{J_1}|\le n}\sum_{|\ii_{J_2}|\le n}\Big(\sum_{|\ii_{(J_1\cup J_2)^c}|\le  n} a_\ii x\ub{3}_{\ii_{J_3}}\cdots x\ub{m}_{\ii_{J_m}}\Big)x\ub{1}_{\ii_{J_1}}x\ub{2}_{\ii_{J_2}}\Big)\\
& = \|A\|_\mathcal{J}.
\end{align*}

\paragraph{2.} There exists $j$ such that $k,l \in J_j$. Without loss of generality we can assume that $j = 1$. We have
\begin{align*}
\|A\circ \Ind{C}\|_\mathcal{J}
&= \sup_{\|x\ub{j}_{\ii_{J_j}}\|_2\le 1\colon  j \ge 2}\Big(\sup_{\|x\ub{1}_{\ii_{J_1}}\|_2\le 1} \sum_{|\ii_{J_1}|\le n}\ind{i_k=i_l}\Big(\sum_{|\ii_{J_1^c}|\le  n} a_\ii x\ub{2}_{\ii_{J_2}}\cdots x\ub{m}_{\ii_{J_m}}\Big)x\ub{1}_{\ii_{J_1}}\Big)\\
& = \sup_{\|x\ub{j}_{\ii_{J_j}}\|_2\le 1\colon  j \ge 2}\Big(\sum_{|\ii_{J_1}|\le n}\ind{i_k=i_l}\Big(\sum_{|\ii_{J_1^c}|\le  n} a_\ii x\ub{2}_{\ii_{J_2}}\cdots x\ub{m}_{\ii_{J_m}}\Big)^2\Big)^{1/2}\\
&\le \sup_{\|x\ub{j}_{\ii_{J_j}}\|_2\le 1\colon  j \ge 2}\Big(\sum_{|\ii_{J_1}|\le n}\Big(\sum_{|\ii_{J_1^c}|\le  n} a_\ii x\ub{2}_{\ii_{J_2}}\cdots x\ub{m}_{\ii_{J_m}}\Big)^2\Big)^{1/2} = \|A\|_\mathcal{J}.
\end{align*}

\end{proof}

For a partition $\mathcal{K}  = \{K_1,\ldots,K_m\} \in P_d$ define
\begin{align}\label{eq_level_set_def}
 L(\mathcal{K}) = \{\ii\in[n]^d\colon i_k= i_l \;\textrm{iff}\; \exists_{j\le m}\; k,l\in K_j\}.
\end{align}
Thus $L(\mathcal{K})$ is the set of all indices for which the partition into level sets is equal to $\mathcal{K}$.

\begin{cor} \label{cor_norm_monotonicity} For any $\mathcal{J,K} \in P_d$ and any $d$-indexed matrix $A$,
\begin{displaymath}
\|A\circ \Ind{L(\mathcal{K})}\|_\mathcal{J} \le 2^{\#\mathcal{K}(\#\mathcal{K}-1)/2}\|A\|_\mathcal{J}.
\end{displaymath}
\end{cor}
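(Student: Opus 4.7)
The plan is to reduce $\Ind{L(\mathcal{K})}$ to a signed sum of indicators of sets to which Lemma \ref{lem_diagonal_selection} applies directly, and then apply the triangle inequality. Write $\mathcal{K} = \{K_1,\ldots,K_m\}$ where $m = \#\mathcal{K}$, and define
\[
E(\mathcal{K}) = \{\ii \in [n]^d\colon i_k = i_l \text{ whenever } k,l \text{ lie in the same block of } \mathcal{K}\}.
\]
Since $E(\mathcal{K}) = \bigcap_{j=1}^m \{\ii\colon i_k = i_l \text{ for all } k,l \in K_j\}$, iterating Lemma \ref{lem_diagonal_selection} once for each block yields $\|A\circ\Ind{E(\mathcal{K})}\|_\mathcal{J} \le \|A\|_\mathcal{J}$. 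The same bound holds for $\Ind{E(\mathcal{K}')}$ whenever $\mathcal{K}'$ is a coarsening of $\mathcal{K}$.

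For $\ii\in E(\mathcal{K})$ and $1\le r \le m$, let $v_r(\ii)$ denote the common value of $i_k$ for $k \in K_r$. Then $L(\mathcal{K})$ is precisely the set of $\ii \in E(\mathcal{K})$ such that the $v_r(\ii)$'s are pairwise distinct, so for every pair $(r,s)$ with $r<s$, set $D_{rs} = \{\ii \in E(\mathcal{K})\colon v_r(\ii) = v_s(\ii)\}$ and note
\[
L(\mathcal{K}) = E(\mathcal{K}) \setminus \bigcup_{1\le r<s\le m} D_{rs}.
\]
The crucial observation is that for any $T \subseteq \binom{[m]}{2}$, the intersection $E(\mathcal{K}) \cap \bigcap_{(r,s)\in T} D_{rs}$ equals $E(\mathcal{K}_T)$ where $\mathcal{K}_T$ is the coarsening of $\mathcal{K}$ obtained by merging the blocks $K_r, K_s$ whenever $(r,s)\in T$ (more precisely, via the transitive closure of $T$). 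Consequently, by the inclusion-exclusion formula,
\[
\Ind{L(\mathcal{K})} = \sum_{T \subseteq \binom{[m]}{2}} (-1)^{\#T}\, \Ind{E(\mathcal{K}_T)}.
\]

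Applying the triangle inequality in the norm $\|\cdot\|_\mathcal{J}$ and Lemma \ref{lem_diagonal_selection} (each $E(\mathcal{K}_T)$ is an intersection of sets of the form $\{\ii\colon i_k = i_l\}$) we obtain
\[
\|A\circ \Ind{L(\mathcal{K})}\|_\mathcal{J} \le \sum_{T\subseteq \binom{[m]}{2}} \|A\circ \Ind{E(\mathcal{K}_T)}\|_\mathcal{J} \le 2^{\binom{m}{2}} \|A\|_\mathcal{J},
\]
which is the claimed estimate. The only slightly delicate point is verifying the identity $E(\mathcal{K})\cap\bigcap_{(r,s)\in T}D_{rs} = E(\mathcal{K}_T)$, but this is immediate from the definitions once one unpacks that on $E(\mathcal{K})$ the indices $\ii$ are parametrized by the values $(v_1(\ii),\ldots,v_m(\ii))$. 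No sharper combinatorial argument is required, since the number of subsets $T$ is exactly $2^{m(m-1)/2}$, matching the constant in the statement.
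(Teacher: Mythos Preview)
Your proof is correct and follows essentially the same approach as the paper. Both arguments rest on Lemma~\ref{lem_diagonal_selection} for the equality constraints and on the identity $\ind{i_k\neq i_l}=1-\ind{i_k=i_l}$ for the distinctness constraints; the only organizational difference is that the paper iterates the bound $\|A\circ\ind{i_k\neq i_l}\|_\mathcal{J}\le 2\|A\|_\mathcal{J}$ one pair at a time, whereas you expand the full product $\prod_{r<s}(1-\ind{v_r=v_s})$ at once via inclusion--exclusion, which yields the same $2^{\binom{m}{2}}$ terms.
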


\begin{proof}
By Lemma \ref{lem_diagonal_selection} and the triangle inequality for any $k< l$, $\|A\circ \ind{i_k\neq i_l}\|_\mathcal{J} = \|A - A\circ\ind{i_k=i_l}\|_\mathcal{J} \le 2\|A\|_\mathcal{J}$. Now it is enough to note that $L(\mathcal{K})$ can be expressed as an intersection of $\#\mathcal{K}$ ``generalized diagonals'' and
$\#\mathcal{K}(\#\mathcal{K}-1)/2$ sets of the form $\{\ii\colon i_k\neq i_l\}$ where $k < l$ and use again Lemma \ref{lem_diagonal_selection} together with the above inequality.
\end{proof}

\subsection{Proof of Theorem \ref{thm:subgaussian_intro}}
Let us first note that the tail bound of Theorem \ref{thm:subgaussian_intro} follows from the moment estimate and Chebyshev inequality in the same way as in Theorems \ref{thm:main_intro} or \ref{thm:main}. We will therefore focus on the moment bound.

The method of proof will rely on the reduction to the Gaussian case via decoupling inequalities, symmetrization and the contraction principle. To carry out this strategy we will need the following representation of $f$.
\begin{align}\label{eq_poly_rep_1}
f(x) = \sum_{0\le d\le D} \sum_{m=0}^d \sum_{{k_1, \ldots, k_m > 0}\atop{k_1+\ldots+k_m=d}} \sum_{\ii \in [n]\uu{m}} c_
{(i_1,k_1),\ldots,(i_m,k_m)}\ub{d} x_{i_1}^{k_1}x_{i_2}^{k_2}\cdots x_{i_m}^{k_m},
\end{align}
where the coefficients  $c_{(i_1,k_1),\ldots,(i_m,k_m)}\ub{d}$ satisfy
\begin{align}\label{eq:poly_symmetry}
c_{(i_1,k_1),\ldots,(i_m,k_m)}\ub{d} = c_{(i_{\pi_1},k_{\pi_1}),\ldots,(i_{\pi_m},k_{\pi_m})}\ub{d}
\end{align}
for all permutations $\pi \colon [m] \to [m]$. At this point we would like to explain the convention regarding indices which we will use throughout this section. It is rather standard, but we prefer to draw the Reader's attention to it, as we will use it extensively in what follows. Namely, we will treat the sequence $\kk = (k_1,\ldots,k_m)$ as a function acting on $[m]$ and taking values in positive integers. In particular if $m=0$, then $[m] = \emptyset$ and there exists exactly one function $\kk\colon [m]\to \N\setminus\{0\}$ (the empty function). Moreover by convention this function satisfies $\sum_{i=1}^m k_i = 0$ (as the summation runs over an empty set). Therefore, for $d=0$ and $m=0$ the subsum over $k_1,\ldots,k_m$ and $\ii$ above is equal to the free coefficient of the polynomial (which can be denoted by $c_\emptyset\ub{0}$), since the summation over $k_1,\ldots,k_m$ runs over a one-element set containing the empty index/function and for this index there is exactly one index $\ii\colon [m]\to \{1,\ldots,n\}$, which belongs to $[n]^{\underline{m}}$ (again the empty-index). Here we also use the convention that a product over an empty set is equal to one. On the other hand, for $d>0$, the contribution from $m=0$ is equal to zero (as the empty index $\kk$ does not satisfy the constraint $k_1+\ldots+k_m = d$ and so the summation over $k_1,\ldots,k_m$ runs over the empty set).

Using \eqref{eq_poly_rep_1} together with independence of $X_1,\ldots,X_n$, one may write
\begin{displaymath}
f(X) - \E f(X)
= \sum_{1\le d\le D} \sum_{m=1}^d \sum_{{k_1, \ldots, k_m > 0}\atop{k_1+\ldots+k_m=d}} \sum_{\ii\in[n]\uu{m}} c_{(i_1,k_1),\ldots,(i_m,k_m)}\ub{d} \sum_{\emptyset \neq J \subseteq [m]} \prod_{j\in J} (X_{i_j}^{k_j} - \E X_{i_j}^{k_j})\prod_{j\notin J} \E X_{i_j}^{k_j}.
\end{displaymath}
Rearranging the terms and using \eqref{eq:poly_symmetry} together with the triangle inequality, we obtain
\begin{displaymath}
|f(X) - \E f(X) |
\le \sum_{1\le d\le D} \sum_{a=1}^d \sum_{{k_1, \ldots, k_a > 0}\atop{k_1+\ldots+k_a=d}} \Big|\sum_{\ii \in [n]\uu{a}} d_{i_1,\ldots,i_a}\ub{k_1,\ldots,k_a} (X_{i_1}^{k_1} - \E X_{i_1}^{k_1})\cdots (X_{i_a}^{k_a} - \E X_{i_a}^{k_a})\Big|,
\end{displaymath}
where
\begin{align*}
d_{i_1,\ldots,i_a}\ub{k_1,\ldots,k_a} = \sum_{a\le m \le D}\sum_{{k_{a+1},\ldots,k_m > 0\colon}\atop{k_1+\ldots+k_m \le D}}\sum_{{i_{a+1},\ldots,i_m\colon}\atop{(i_1,\ldots,i_m)\in [n]\uu{m}}}\binom{m}{a}c_{(i_1,k_1),\ldots,(i_m,k_m)}\ub{k_1+\ldots+k_m}\E X_{i_{a+1}}^{k_{a+1}}\cdots \E X_{i_m}^{k_{i_m}}.
\end{align*}

Note that \eqref{eq:poly_symmetry} implies that for every permutation $\pi\colon [a]\to [a]$,
\begin{align}\label{eq:poly_symmetry_d}
d_{i_1,\ldots,i_a}\ub{k_1,\ldots,k_a} = d_{i_{\pi_1},\ldots,i_{\pi_a}}\ub{k_{\pi_1},\ldots,k_{\pi_a}}.
\end{align}
 Let now $X\ub{1},\ldots,X\ub{D}$ be independent copies of the random vector $X$ and $(\varepsilon_i\ub{j})_{i\le n,j\le D}$ an array of i.i.d. Rademacher variables independent of $(X\ub{j})_j$. For each $k_1,\ldots,k_a$, by decoupling inequalities (Theorem \ref{thm:decoupling} in the Appendix) applied to the functions
 \begin{displaymath}
 h_{i_1,\ldots,i_a}\ub{k_1,\ldots,k_a}(x_1,\ldots,x_a) =  d_{i_1,\ldots,i_a}\ub{k_1,\ldots,k_a}(x_1^{k_1} - \E X_{i_1}^{k_1})\cdots(x_a^{k_a} - \E X_{i_a}^{k_a})
 \end{displaymath}
 and standard symmetrization inequalities (applied conditionally $a$ times) we obtain,
\begin{align}\label{eq:after_decoupling}
&\|f(X) - \E f(X) \|_p\\
&\le C_D\sum_{d=1}^D \sum_{a=1}^d \sum_{{k_1, \ldots, k_a > 0}\atop{k_1+\ldots+k_a=d}} \bigg\|\sum_{\ii \in[n]\uu{a}} d_{i_1,\ldots,i_a}\ub{k_1,\ldots,k_a} \Big((X_{i_1}\ub{1})^{k_1} - \E (X_{i_1}\ub{1})^{k_1}\Big)\cdots \Big((X_{i_a}\ub{a})^{k_a} - \E (X_{i_a}\ub{a})^{k_a}\Big)\bigg\|_p\nonumber\\
&\le C_D\sum_{d=1}^D \sum_{a=1}^d \sum_{{k_1, \ldots, k_a > 0}\atop{k_1+\ldots+k_a=d}} \bigg\|\sum_{\ii \in[n]\uu{a}} d_{i_1,\ldots,i_a}\ub{k_1,\ldots,k_a} \Big(\varepsilon_{i_1}\ub{1}(X_{i_1}\ub{1})^{k_1} \cdots \varepsilon_{i_a}\ub{a}(X_{i_a}\ub{a})^{k_a} \Big)\bigg\|_p\nonumber
\end{align}
(note that in the first part of Theorem \ref{thm:decoupling} one does not impose any symmetry assumptions on the functions $h_\ii$).

We will now use the following standard comparison lemma (for reader's convenience its proof is presented in the Appendix).
\begin{lemma}\label{lemma_comp_Gauss} For any positive integer $k$, if $Y_1,\ldots,Y_n$ are independent symmetric  variables with $\|Y_i\|_{\psi_{2/k}} \le M$, then
\begin{displaymath}
\|\sum_{i=1}^n a_i Y_i\|_p \le C_k M\|\sum_{i=1}^n a_i g_{i1}\cdots g_{ik} \|_p,
\end{displaymath}
where $g_{ij}$ are i.i.d. $\mathcal{N}(0,1)$ variables.
\end{lemma}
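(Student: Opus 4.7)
The plan is to sandwich both sides of the claimed inequality between $k$-dependent multiples of the same elementary quantity $\sqrt{p}\,(\sum_i a_i^2)^{1/2}+p^{k/2}\max_i|a_i|$.

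For the lower estimate on $\|\sum_i a_ig_{i1}\cdots g_{ik}\|_p$ I would apply Lata\l{}a's decoupled inequality, Theorem~\ref{thm_Latala_dec}, to the $k$-indexed matrix $A=(a_\ii)_{\ii\in[n]^k}$ defined by $a_\ii=a_{i_1}$ when $i_1=\cdots=i_k$ and $a_\ii=0$ otherwise, so that $\langle A,G_1\otimes\cdots\otimes G_k\rangle=\sum_i a_ig_{i1}\cdots g_{ik}$. A direct computation from the definition of the partition norms gives $\|A\|_{\{[k]\}}=(\sum_i a_i^2)^{1/2}$ while $\|A\|_{\mathcal{J}}=\max_i|a_i|$ for every $\mathcal{J}\in P_k$ with $\#\mathcal{J}\ge 2$ (this is the same diagonal-matrix computation already used in the proof of Corollary~\ref{cor:additive_1}). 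Theorem~\ref{thm_Latala_dec} then yields
\[
\Big\|\sum_{i=1}^n a_ig_{i1}\cdots g_{ik}\Big\|_p \;\gtrsim_k\; \sqrt{p}\,\Big(\sum_i a_i^2\Big)^{1/2}+p^{k/2}\max_i|a_i|.
\]

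For the matching upper estimate on $\|\sum_i a_iY_i\|_p$ I would prove a generalized Bernstein-type inequality for sums of independent symmetric $\psi_{2/k}$ random variables, namely
\[
\Big\|\sum_{i=1}^n a_iY_i\Big\|_p \le C_kM\Big(\sqrt{p}\,\Big(\sum_i a_i^2\Big)^{1/2}+p^{k/2}\max_i|a_i|\Big).
\]
The assumption $\|Y_i\|_{\psi_{2/k}}\le M$ is equivalent to the moment bound $\|Y_i\|_q\le C_kMq^{k/2}$ for all $q\ge 1$ and to the tail bound $\Pr(|Y_i|>t)\le 2\exp(-(t/M)^{2/k})$. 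From these inputs I would derive the two-term tail estimate
\[
\Pr\Big(\big|\sum_ia_iY_i\big|>t\Big) \le 2\exp\Big(-\frac{1}{C_k}\min\Big(\frac{t^2}{M^2\sum_ia_i^2},\,\Big(\frac{t}{M\max_i|a_i|}\Big)^{2/k}\Big)\Big),
\]
via a Chernoff/moment-generating-function calculation when $k\le 2$, and, for $k\ge 3$ (where the MGF of $Y_i$ is not finite in any neighbourhood of $0$), by truncating $Y_i$ at the scale $T\asymp Mp^{k/2}$, applying Hoeffding to the bounded part and controlling the unbounded remainder by the moment bound. Integrating the tail against $pt^{p-1}\,dt$ yields the desired moment inequality.

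Combining the two estimates immediately gives the lemma with $C_k$ depending only on~$k$. The main technical obstacle is the generalized Bernstein bound in the regime $k\ge 3$, which forces the truncation argument in place of a direct Chernoff estimate; for $k=1,2$ the step reduces to Hoeffding's or the classical Bernstein inequality.
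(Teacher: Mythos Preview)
Your approach is correct in outline but genuinely different from the paper's, and one step is sketched too optimistically.

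The paper does not pass through the explicit quantity $\sqrt{p}\,|a|_2+p^{k/2}|a|_\infty$ at all. Instead it argues by \emph{tail domination and contraction}: from $\|Y_i\|_{\psi_{2/k}}\le M$ one has $\p(|Y_i|\ge t)\le 2\exp(-(t/M)^{2/k})$, while $\p(C_k|g_{i1}\cdots g_{ik}|\ge t)\ge 2\exp(-t^{2/k})$ for $t>1$. Hence (after cutting out $\{|Y_i|<1\}$) one can couple copies $\tilde Y_i$ of $|Y_i|$ to copies $Z_i$ of $|g_{i1}\cdots g_{ik}|$ so that $\tilde Y_i\le C_kMZ_i$ pointwise, insert Rademacher signs, and apply the contraction principle and Jensen's inequality. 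This is short, uses no Lata\l a and no Bernstein, and gives the lemma directly.

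Your route---lower bound the Gaussian-product side via Theorem~\ref{thm_Latala_dec} on the diagonal matrix, upper bound the $Y$-side by a sub-Weibull Bernstein inequality---does work and has the pleasant byproduct of identifying both sides with $\sqrt{p}\,|a|_2+p^{k/2}|a|_\infty$ up to $k$-constants. But two remarks. First, invoking Theorem~\ref{thm_Latala_dec} is heavy machinery relative to what the lemma needs. Second, and more substantively, the sketch ``truncate at $T\asymp Mp^{k/2}$ and apply Hoeffding to the bounded part'' does not give the right bound: Hoeffding uses only the uniform bound $|a_iY_i'|\le|a_i|T$, producing $\|\sum a_iY_i'\|_p\le C\sqrt{p}\,T|a|_2=CMp^{(k+1)/2}|a|_2$, which is off by $p^{k/2}$. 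You need Bernstein (to exploit that $\E(Y_i')^2\le C_kM^2$ independently of $T$), and then the truncation level must be lowered to $T\asymp Mp^{k/2-1}$ to make the $pT|a|_\infty$ term match $p^{k/2}|a|_\infty$; the remaining tail part then still requires a nontrivial argument (e.g.\ a Hoffmann--J\o rgensen or maximal-term estimate) rather than a single ``moment bound''. The inequality you want is true and known, but it is not as routine as your sketch suggests; ironically, the slickest proof of it is precisely the tail-domination-plus-contraction argument the paper uses.
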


Note that for any positive integer $k$ we have $\|X_i^k\|_{\psi_{2/k}} = \|X_i\|_{\psi_2}^k \le L^k$, so \eqref{eq:after_decoupling} together with the  above lemma (used repeatedly and conditionally) yield
\begin{multline}\label{eq_now_Gaussian}
\|f(X) - \E f(X) \|_p \\
\le C_D\sum_{1\le d\le D} L^d\sum_{a=1}^d \sum_{{k_1, \ldots, k_a > 0}\atop{k_1+\ldots+k_a=d}} \Big\|\sum_{\ii \in[n]\uu{a}} d_{i_1,\ldots,i_a}\ub{k_1,\ldots,k_a} (g\ub{1}_{i_1,1}\cdots g\ub{1}_{i_1,k_1})\cdots(g\ub{a}_{i_a,1}\cdots g\ub{a}_{i_a,k_a})\Big\|_p,
\end{multline}
where $(g_{i,k}\ub{j})$ is an array of i.i.d. standard Gaussian variables.
Consider now multi-indexed matrices $B_1,\ldots,B_D$ defined as follows. For $1\le d\le D$, and a multi-index $\rr = (r_1,\ldots,r_d)\in [n]^d$   let $\mathcal{I} = \{I_1,\ldots,I_a\}$ be the partition of $\{1,\ldots,d\}$ into the level sets of $\rr$ and $i_1,\ldots,i_a$ be the values corresponding to the level sets $I_1,\ldots,I_a$. Define moreover
\begin{align*}
b\ub{d}_{r_1,\ldots,r_d} = d\ub{\#I_1,\ldots,\#I_a}_{i_1,\ldots,i_a}
\end{align*}
 (note that thanks to \eqref{eq:poly_symmetry_d} this definition does not depend on the order of $I_1,\ldots,I_a$).
Finally, define the $d$-indexed matrix $B_d = (b\ub{d}_{\rr})_{\rr\in [n]^d}$.

Let us also define for $k_1,\ldots,k_a > 0$, $\sum_{i=1}^a k_i = d$ the partition $\mathcal{K}(k_1,\ldots,k_a) \in \mathcal{P}_d$ by splitting the set $\{1,\ldots,d\}$ into consecutive intervals of length $k_1,\ldots,k_a$, i.e.,
$\mathcal{K} = \{K_1,\ldots,K_a\}$, where for $l = 1,\ldots,a$, $K_l = \{1+\sum_{i=1}^{l-1} k_i,2+\sum_{i=1}^{l-1} k_i, \ldots,\sum_{i=1}^{l} k_i\}$.

 Applying Theorem \ref{thm_Latala_dec} to the right hand side of (\ref{eq_now_Gaussian}), we obtain
 \begin{align*}
&\|f(X) - \E f(X) \|_p \\
& \le C_D\sum_{1\le d\le D} L^d\sum_{a=1}^d  \sum_{{k_1, \ldots, k_a > 0}\atop{k_1+\ldots+k_a=d}}\Big\|\Big\langle B_d \circ \Ind{L(\mathcal{K}(k_1,\ldots,k_a))},\bigotimes_{j=1}^a \bigotimes_{k=1}^{k_j} (g\ub{j}_{i,k_j})_{i\le n}\Big\rangle \Big\|_p\nonumber \\
&\le C_D\sum_{1\le d\le D}L^d \sum_{a=1}^d \sum_{{k_1, \ldots, k_a > 0}\atop{k_1+\ldots+k_a=d}} \sum_{\mathcal{J} \in P_{d}} p^{\#\mathcal{J}/2}\|B_d \circ \Ind{L(\mathcal{K}(k_1,\ldots,k_a))}\|_\mathcal{J}.
 \end{align*}

 Note that for all $k_1,\ldots,k_a$ by Corollary \ref{cor_norm_monotonicity} we have $\|B_d\circ\Ind{L(\mathcal{K}(k_1,\ldots,k_a))}\|_\mathcal{J} \le C_d \|B_d\|_\mathcal{J}$. Thus  we obtain
 \begin{align*}
\|f(X) - \E f(X) \|_p\le C_D\sum_{1\le d\le D}  L^d\sum_{\mathcal{J} \in P_{d}} p^{\#\mathcal{J}/2}\|B_d \|_\mathcal{J}.
 \end{align*}

Our next goal is to replace $B_d$ in the above inequality by $\E \D^d f(X)$. To this end we will analyse the structure of the coefficients of $B_d$ and compare them with the integrated partial derivatives of $f$.

Let us first calculate $\E \D^d f(X)$. Consider $\rr\in [n]^d$, such that $i_1,\ldots,i_a$ are its distinct values, taken $l_1,\ldots,l_a$ times respectively. We have
\begin{multline*}
\E \frac{\partial^{d} f}{\partial x_{r_1}\cdots\partial x_{r_d}}(X) = \sum_{k_1\ge l_1,\ldots,k_a\ge l_a}\sum_{a\le m \le D} \sum_{{k_{a+1},\ldots,k_m > 0}\atop{k_1+\ldots+k_m \le D}} \sum_{{i_{a+1},\ldots,i_m}\atop{(i_1,\ldots,i_m) \in [n]\uu{m}}} \\
\Bigg[\binom{m}{a}a!c\ub{k_1+\ldots+k_m}_{(i_1,k_1),\ldots,(i_m,k_m)}\prod_{j=1}^a \E X_{i_j}^{k_j-l_j}\prod_{j=a+1}^m \E X_{i_j}^{k_j}\prod_{j=1}^a \frac{k_j!}{(k_j - l_j)!}\Bigg],
\end{multline*}
where we have used \eqref{eq:poly_symmetry}.

By comparing this with the definition of $b\ub{d}_{r_1,\ldots,r_d}$ and $d\ub{k_1,\ldots,k_a}_{i_1,\ldots,i_a}$ one can see that the sub-sum of the right hand side above corresponding to the choice $k_1 = l_1,\ldots,k_a = l_a$ is equal to $a!l_1!\cdots l_a! b\ub{d}_{r_1,\ldots,r_d}$.

In particular for $d=D$, since $l_1+\ldots+l_a = D$, we have
\begin{displaymath}
\E \frac{\partial^{D} f}{\partial x_{r_1}\cdots\partial x_{r_D}}(X) = a! l_1! \cdots l_a! b\ub{D}_{r_1,\ldots,r_D}
\end{displaymath}
and so
\begin{align*}
\|B_D\|_\mathcal{J}\le  \sum_{\mathcal{K}\in \mathcal{P}_D} \|B_D\circ \Ind{L(\mathcal{K})}\|_\mathcal{J} \le \sum_{\mathcal{K}\in \mathcal{P}_D} \|\D^D f(X)\circ \Ind{L(\mathcal{K})}\|_\mathcal{J} \le C_D \|\D^D f(X)\|_\mathcal{J},
\end{align*}
where in the last inequality we used Corollary \ref{cor_norm_monotonicity}.
Therefore if we prove that for all $d < D$ and all partitions $\mathcal{I} = \{I_1,\ldots,I_a\},\mathcal{J} = \{J_1,\ldots,J_b\} \in P_d$,
\begin{align}\label{eq_subgaussian_to_prove}
\|a!\#I_1!\cdots \#I_a! (B_d \circ \Ind{L(\mathcal{I})}) - \E \D^d f(X)\circ \Ind{L(\mathcal{I})}\|_\mathcal{J} \le C_D \sum_{d< k \le D} L^{k-d} \sum_{{\mathcal{K} \in P_k}\atop {\#\mathcal{K} = \#\mathcal{J}}}\|B_k\|_\mathcal{K},
\end{align}
then by simple reverse  induction (using again Corollary \ref{cor_norm_monotonicity}) we will obtain
\begin{align*}
\sum_{1\le d\le D} L^d \sum_{\mathcal{J} \in P_{d}} p^{\#\mathcal{J}/2}\|B_d \|_\mathcal{J} \le C_D\sum_{1\le d\le D} L^d\sum_{\mathcal{J} \in P_{d}} p^{\#\mathcal{J}/2}\|\E \D^d f(X)\|_\mathcal{J},
\end{align*}
which will end the proof of the theorem.

Fix any $d < D$ and partitions $\mathcal{I} = \{I_1,\ldots,I_a\}, \mathcal{J} = \{J_1,\ldots,J_b\}  \in P_d$. Denote  $l_i = \# I_i$.
 For every sequence $k_1,\ldots,k_a$ such that $k_i \ge l_i$ for $i\le a$ and there exists $i \le a$ such that $k_i > l_i$, let us define a $d$-indexed matrix $E\ub{d,k_1,\ldots,k_a}_\mathcal{I} = (e\ub{d,k_1,\ldots,k_a}_\rr)_{\rr \in [n]^d}$, such that $e\ub{d,k_1,\ldots,k_a}_\rr = 0$ if $\rr \notin L(\mathcal{I})$ and for $\rr \in L(\mathcal{I})$,
\begin{align*}
e\ub{d,k_1,\ldots,k_a}_\rr = \sum_{a\le m\le D}\sum_{{k_{a+1},\ldots,k_m > 0}\atop{k_1+\ldots+k_m \le D}} \sum_{{i_{a+1},\ldots,i_m}\atop{(i_1,\ldots,i_m) \in[n]^{\underline{m}}}} \binom{m}{a}c\ub{k_1+\ldots+k_m}_{(i_1,k_1),\ldots,(i_m,k_m)}\prod_{j=1}^a \E X_{i_j}^{k_j-l_j}\prod_{j=a+1}^m \E X_{i_j}^{k_j},
\end{align*}
where $i_1,\ldots,i_a$ are the values of $\rr$ corresponding to the level sets $I_1,\ldots,I_a$. We then have
\begin{align*}
\sum_{{k_1\ge l_1,\ldots,k_a \ge l_a}\atop{\exists_i k_i > l_i}} a!\frac{k_1!}{(k_1-l_1)!}\cdots \frac{k_a!}{(k_a-l_a)!}E_\mathcal{I}\ub{d,k_1,\ldots,k_a} = \E \D^d f(X)\circ \Ind{L(\mathcal{I})} - a!l_1!\cdots l_a! B_d\circ \Ind{L(\mathcal{I})}.
\end{align*}
Since we do not pay attention to constants depending only on $D$, by the above formula and the triangle inequality, to prove (\ref{eq_subgaussian_to_prove}) it is enough to show that for all sequences $k_1,\ldots,k_a$ such that $k_i \ge l_i$ for $i\le a$ and there exists $i \le a$ such that $k_i > l_i$ one has
\begin{align}\label{eq_subgaussian_reduced}
\|E\ub{d,k_1,\ldots,k_a}_\mathcal{I}\|_{\mathcal{J}} \le C_D L^{\sum_{j\le a} (k_j-l_j)}\|B_{k_1+\ldots+k_a}\|_\mathcal{K}
\end{align}
for some partition $\mathcal{K}\in \mathcal{P}_{k_1+\ldots+k_a}$ with $\# \mathcal{K} = \#\mathcal{J}$ (note that $\sum_{j\le a} l_j = d$). Therefore in what follows we will fix $k_1,\ldots,k_a$ as above and to simplify the notation we will write $E\ub{d}$ instead of $E\ub{d,k_1,\ldots,k_a}_\mathcal{I}$ and $e\ub{d}_\rr$ instead of $e\ub{d,k_1,\ldots,k_a}_\rr$.

Fix therefore any partition $\tilde{\mathcal{I}} = \{\tilde{I}_1,\ldots,\tilde{I_a}\} \in \mathcal{P}_{k_1+\ldots+k_a}$ such that $\#\tilde{I}_i = k_i$ and $I_i \subseteq \tilde{I}_i$ for all $i \le a$ (the specific choice of $\tilde{\mathcal{I}}$ is irrelevant). Finally define a $(k_1+\ldots+k_a)$-indexed matrix $\tilde{E}\ub{k_1+\ldots+k_a} = (\tilde{e}\ub{k_1+\ldots+k_a}_\rr)_{\rr\in [n]^d}$ by setting
\begin{align}\label{eq:E_tilde_construction}
\tilde{e}\ub{k_1+\ldots+k_a}_\rr
= e\ub{d}_{\rr_{[d]}} \ind{\rr\in L(\mathcal{\tilde{I}})}.
 \end{align}
 In other words the new matrix is created by embedding the $d$-indexed matrix into a ``generalized diagonal'' of a $(k_1+\ldots+k_a)$-indexed matrix by adding  $\sum_{j\le a} (k_j-l_j)$ new indices and assigning to them the values of old indices (for each $j \le a$ we add $k_j-l_j$ times the common value attained by $\rr_{\{1,\ldots,d\}}$ on $I_j$).

Recall now the definition of the coefficients $b\ub{d}_\rr$  and note that for any $\rr \in L(\mathcal{\tilde{I}})\subseteq [n]^{k_1+\ldots+k_a}$ we have $\tilde{e}\ub{k_1+\ldots+k_a}_\rr = b\ub{k_1+\ldots+k_a}_\rr\prod_{j=1}^a\E X_{i_j}^{k_j-l_j}$, where for $j\le a$, $i_j$ is the value of $\rr$ on its level set $\tilde{I}_j$.
This means that
$\tilde{E}\ub{k_1+\ldots+k_a} = (B_{k_1+\ldots+k_a}\circ \Ind{L(\mathcal{\tilde{I}})})\circ (\otimes_{s=1}^{k_1+\ldots+k_a} v_s)$,
where  $v_s = (\E X_i^{k_j-l_j})_{i\le n}$ if $s \in \{\min I_1,\ldots,\min I_a\}$ and $v_s = (1,\ldots,1)$ otherwise. Since $\|v_s\|_\infty \le (C_DL)^{k_j-l_j}$ if $s \in \{\min I_j\}_{j\le a}$ and $\|v_s\|_\infty = 1$ otherwise, by Lemma \ref{lem_tensor_product} this implies  that
for any $\mathcal{K} \in P_{k_1+\ldots+k_a}$,
\begin{align}\label{eq_subgaussian_aux}
\|\tilde{E}\ub{k_1+\ldots+k_a}\|_\mathcal{K} \le (C_D L)^{\sum_{j\le a}(k_j-l_j)}\|B_{k_1+\ldots+k_a}\circ \Ind{L(\tilde{\mathcal{I}})}\|_\mathcal{K} \le C_DL^{\sum_{j\le a}(k_j-l_j)}\|B_{k_1+\ldots+k_a}\|_\mathcal{K},
\end{align}
where in the last inequality we used Corollary \ref{cor_norm_monotonicity}.

We will now use the above inequality to prove \eqref{eq_subgaussian_reduced}. Consider the unique partition $\mathcal{K} = \{K_1,\ldots,K_b\}$ satisfying the following two conditions:
\begin{itemize}
\item for each $j\le b$, $J_j \subseteq K_j$,

 \item for each $s \in \{d+1,\ldots,k_1+\ldots+k_a\}$ if $s \in \tilde{I}_j$ and $\pi(s) := \min \tilde{I}_j\in J_k$, then $s \in K_k$. In other words all indices $s$, which in the construction of $\tilde{\mathcal{I}}$ were added to $I_j$ (i.e., elements of $\tilde{I}_j\setminus I_j$) are now added to the unique element of $\mathcal{J}$ containing $\pi(s) = \min \tilde{I}_j = \min I_j$.
\end{itemize}

Now, it is easy to see  that $\|E\ub{d}\|_\mathcal{J}\le \|\tilde{E}\ub{k_1+\ldots+k_a}\|_\mathcal{K}$.
Indeed, consider an arbitrary $x\ub{j} = (x_{\rr_{J_j}}\ub{j})_{|\rr_{J_j}|\le n}$, $j=1,\ldots,b$, satisfying $\|x\ub{j}\|_2\le 1$. Define $y\ub{j} = (y_{\rr_{K_j}}\ub{j})_{|\rr_{K_j}|\le n}$, $j =1,\ldots,b$ with the formula
\begin{displaymath}
y\ub{j}_{\rr_{K_j}} = x\ub{j}_{\rr_{K_j\cap[d]}}\prod_{s\in K_j \setminus [d]}\ind{r_s = r_{\pi(s)}}.
\end{displaymath}
We have $\|y\ub{j}\|_2 = \|x\ub{j}\|_2\le 1$. Moreover, by the construction of the matrix $\tilde{E}\ub{k_1+\ldots+k_a}$ (recall \eqref{eq:E_tilde_construction}), we have
\begin{displaymath}
\sum_{|\rr_{[d]}|\le n} e_{\rr_{[d]}}\ub{d} \prod_{j=1}^b x\ub{j}_{\rr_{J_j}} = \sum_{|\rr_{[k_1+\ldots+k_a]}|\le n} \tilde{e}_{\rr_{[k_1+\ldots+k_a]}}\ub{k_1+\ldots+k_a} \prod_{j=1}^b x\ub{j}_{\rr_{J_j}} = \sum_{|\rr_{[k_1+\ldots+k_a]}|\le n} \tilde{e}_{\rr_{[k_1+\ldots+k_a]}}\ub{k_1+\ldots+k_a} \prod_{j=1}^b y\ub{j}_{\rr_{K_j}}
\end{displaymath}
(in the last equality we used the fact that if $\rr \in L(\tilde{I})$, then for $s > d$, $r_{\pi(s)} = r_s$ and so $y\ub{j}_{\rr_{K_j}} = x\ub{j}_{\rr_{K_j\cap[d]}} = x\ub{j}_{\rr_{J_j}}$). By taking the supremum over $x\ub{j}$ one thus obtains $\|E\ub{d}\|_\mathcal{J}\le \|\tilde{E}\ub{k_1+\ldots+k_a}\|_\mathcal{K}$. Combining this inequality with (\ref{eq_subgaussian_aux}) proves (\ref{eq_subgaussian_reduced}) and thus (\ref{eq_subgaussian_to_prove}). This ends the proof of Theorem \ref{thm:subgaussian_intro}.

\subsection{Application: Subgraph counting in random graphs}

We will now apply results from Section \ref{sec:subgaussian} to some special cases of the problem of subgraph counting in Erd\H{o}s-R{\'e}nyi random graphs $G(n,p)$, which is often used as a test model for deviation inequalities  for polynomials in independent random variables. More specifically we will investigate the problem of counting cycles of fixed length.

It turns out that Theorem \ref{thm:subgaussian_intro} may give in some ranges of parameters optimal inequalities (leading to improvements of known results), whereas in some other regimes the estimates it gives are suboptimal.

\medskip
Let us first describe the setting (we will do it in a slightly more general form that needed for our example). We will consider undirected graphs $G = (V,E)$, where $V$ is a finite set of vertices and $E$ is the set of edges (i.e. two-element subsets of $V$). By $V_G = V(G)$ and $E_G = E(G)$ we mean the set of vertices and edges (respectively) of a graph $G$. Also, $v_G = v(G)$ and $e_G = e(G)$ denote the number of vertices and edges in $G$.
We say that a graph $H$ is a subgraph of a graph $G$ if $V_H \subseteq V_G$ and $E_H \subseteq E_G$ (thus a subgraph is non-necessarily induced). Graphs $H$ and $G$ are isomorphic if there is a bijection $\pi\colon V_H \to V_G$ such that for all distinct $v,w \in V_H$, $\{\pi(v),\pi(w)\} \in E_G$ iff $\{v,w\} \in E_H$.

For $p \in [0,1]$ consider now the Erd\H{o}s-R{\'e}nyi random graph $G = G(n,p)$, i.e., a graph with $n$ vertices (we will assume that $V_G = [n]$) whose edges are selected independently at random with probability $p$. In what follows we will be concerned with a number of copies of a given graph $H = ([k],E_H)$ in a graph $G$, i.e., the number of subgraphs of $G$ which are isomorphic to $H$. We will denote this random variable by $Y_H(n,p)$. To relate $Y_H(n,p)$ to polynomials, let us consider the family $C(n,2)$ of two-element subsets of $[n]$ and the family of independent random variables $X = (X_{e})_{e \in C(n,2)}$, such that $\p(X_{e} = 1) = 1 - \p(X_{e} = 0) = p$ (i.e., $X_{e}$ indicates whether the edge $e$ has been selected or not). Denote moreover by $\textup{Aut}(H)$ the group of isomorphisms of $H$ into itself and note that
\begin{displaymath}
Y_H(n,p) = \frac{1}{\#\textup{Aut}(H)} \sum_{\ii \in [n]\uu{k}} \prod_{{v,w \in [k]}\atop{v < w, \{v,w\} \in E(H)}} X_{\{i_v,i_w\}}.
\end{displaymath}

The right-hand side above is a homogeneous tetrahedral polynomial of degree $e_H$. Moreover the variables $X_{\{v,w\}}$ satisfy
\begin{displaymath}
\E \exp\Big(X_{\{v,w\}}^2\log(1/p)\Big) = 1 - p + p\cdot\frac{1}{p} \le 2
\end{displaymath}
and
\begin{displaymath}
\E \exp\Big(X_{\{v,w\}}^2\log 2\Big) \le 2,
\end{displaymath}
which implies that $\|X_{\{v,w\}}\|_{\psi_2} \le (\log(1/p))^{-1/2}\wedge (\log(2))^{-1/2} \le \sqrt{2} (\log(2/p))^{-1/2}$.

We can thus apply Theorem~\ref{thm:subgaussian_intro} to $Y_H(n,p)$ and obtain
\begin{align}\label{eq:general_graph}
\p\big(|Y_{H}(n,p) - \E Y_{H}(n,p) |\ge t\big)\le 2\exp\bigg(-\frac{1}{C_k}\min_{1 \le d\le k}\min_{\mathcal{J} \in P_d} \Big(\frac{t}{L_p^d\|\E\D^d f(X)\|_\mathcal{J}}\Big)^{2/\#\mathcal{J}}\bigg),
\end{align}
where $L_p = \sqrt{2} \big(\log(2/p)\big)^{-1/2}$ and  $f \colon \R^{C(n,2)} \to \R$ is given by
\begin{align*}
f((x_{e})_{e \in C(n,2)}) = \frac{1}{\# \textup{Aut}(H)} \sum_{\ii \in [n]\uu{k}} \prod_{{v,w \in [k]}\atop{v < w, \{v,w\} \in E}} x_{\{i_v,i_w\}}.
\end{align*}

Deviation inequalities for subgraph counts have been studied by many authors, to mention \cite{KimVuConc,JaRuInf,VuConc,JanOleRu,KimVUtr,ChatterjeeTr,DeMarcoKahnTr,DeMarcoKahnCl}. As it turns out the lower tail $\p(Y_H(n,p) \le \E Y_H(n,p) - t)$ is easier than the upper tail $\p(Y_H(n,p) \ge \E Y_H(n,p) +t)$. The lower tail turns out to be also lighter than the upper one. Since our inequalities concern $|Y_H(n,p) - \E Y_H(n,p)|$, we cannot hope to recover optimal lower tail estimates, however we can still hope to get bounds which in some range of parameters $n,p$ will agree with optimal upper tail estimates.

Of particular importance in literature is the law of large numbers regime, i.e., the case when $t = \varepsilon \E Y_H(n,p)$. In~\cite{JanOleRu} the Authors prove that for every $\varepsilon>0$ such that $\p\big(Y_H(n,p) \ge (1+\varepsilon)\E Y_H(n,p)\big) > 0$,
\begin{equation}\label{eq:JOR}
\exp\left(-C(H,\varepsilon)M_H^\ast(n,p)\log\frac{1}{p}\right)
  \le \p\big(Y_H(n,p) \ge (1+\varepsilon)\E Y_H(n,p)\big)
  \le \exp\big(-c(H,\varepsilon)M_H^\ast(n,p)\big)
\end{equation}
for certain constants $c(H,\varepsilon), C(H,\varepsilon)$ and a certain function $M_H^\ast(n,p)$. Since the general definition of $M_H^\ast$ is rather involved we will skip the details (in the examples considered in the sequel we will provide specific formulas). Note that if one disregards the constants depending only on $H$ and $\varepsilon$, the lower and upper estimate above differ by the factor $\log(1/p)$ in the exponent. To our best knowledge providing a lower and upper bound  for general $H$, which would agree up to multiplicative constants in the exponent (depending only on $H$ and $\varepsilon$, but not on $n$ or $p$) is an open problem.

We will now specialize to the case when $H$ is a cycle. For simplicity we will first present the case of the triangle $K_3$ (the clique with three vertices). For this graph the upper bound from \cite{JanOleRu} has been recently strengthened to match the lower one (up to a constant depending only on $\varepsilon$) by Chatterjee \cite{ChatterjeeTr} and DeMarco and Kahn \cite{DeMarcoKahnTr} (who also obtained a similar result for general cliques \cite{DeMarcoKahnCl}). In the next section we show that if $p$ is not too small, the inequality \eqref{eq:general_graph} also allows to recover the optimal upper bound. In Section \ref{sec:cycles} we provide an upper bound for cycles of arbitrary (fixed) length $k$, which is optimal for $p \ge n^{-\frac{k-2}{2(k-1)}} \log^{-\frac12} n$.

\subsubsection{Counting triangles}
Assume that $H = K_3$ and let us analyse the behaviour of $\|\E \D^d f(X)\|_\mathcal{J}$ for
$d = 1,2,3$. Of course in this case $\#\textup{Aut}(H) = 6$.

We have for any $e = \{v,w\}$, $v,w \in [n]$,
\begin{displaymath}
\frac{\partial}{\partial x_e} f(x) = \sum_{i \in [n]\setminus\{v,w\}} x_{\{i,v\}} x_{\{i,w\}}
\end{displaymath}
and so $\|\E \D f(X)\|_{\{1\}}= (n-2)p^2 \sqrt{n(n-1)/2} \le  n^2p^2$.

For $e_1 = e_2$ or when $e_1$ and $e_2$ do not have a common vertex, we have $\frac{\partial^2}{\partial x_{e_1}\partial x_{e_2}} f = 0$, whereas for $e_1,e_2$ sharing exactly one vertex, we have
\begin{displaymath}
\frac{\partial^2}{\partial x_{e_1}\partial x_{e_2}} f (x) = x_{\{v,w\}},
\end{displaymath}
where $v,w$ are the vertices of $e_1,e_2$ distinct from the common one. Therefore
\begin{displaymath}
\E \D^2 f(X) = p (\ind{\textrm{$e_1,e_2$ have exactly one common vertex}})_{e_1,e_2 \in C(n,2)}.
\end{displaymath}
Using the fact that $\E \D^2 f(X)$ is symmetric and for each $e_1$ the sum of entries of $\E \D^2 f(X)$ in the row corresponding to $e_1$ equals $2p(n-2)$, we obtain $\|\E \D^2 f(X)\|_{\{1\}\{2\}} = 2p(n-2) \le 2pn$. One can also easily see that $\|\E \D^2 f(X)\|_{\{1,2\}} = p\sqrt{n(n-1)(n-2)} \le pn^{3/2}$.

Finally
\begin{displaymath}
\frac{\partial^3}{\partial x_{e_1}\partial x_{e_2} \partial x_{e_3}} f = \ind{\textrm{$e_1,e_2,e_3$ form a triangle}}
\end{displaymath}
and thus $\|\E \D^3f(X)\|_{\{1,2,3\}} = \sqrt{n(n-1)(n-2)} \le n^{3/2}$. Moreover, due to symmetry we have
\begin{displaymath}
\|\E \D^3 f(X)\|_{\{1,2\}\{3\}} = \|\E \D^3 f(X)\|_{\{1,3\}\{2\}} = \|\E \D^3 f(X)\|_{\{2,3\}\{1\}}.
\end{displaymath}
Consider arbitrary $(x_{e_1})_{e_1 \in C(n,2)}$ and $(y_{e_2,e_3})_{e_2,e_3 \in C(n,2)}$ of norm one. We have
\begin{align*}
&\sum_{e_1,e_2,e_3} \ind{\textrm{$e_1,e_2,e_3$ form a triangle}}x_{e_1}y_{e_2,e_3} \le
 \sqrt{\sum_{e_1} \Big( \sum_{e_2,e_3}\ind{\textrm{$e_1,e_2,e_3$ form a triangle}}y_{e_2,e_3} \Big)^2}\\
&\le \sqrt{\sum_{e_1} \Big( \sum_{e_2,e_3}\ind{\textrm{$e_1,e_2,e_3$ form a triangle}} \Big) \Big( \sum_{e_2,e_3}\ind{\textrm{$e_1,e_2,e_3$ form a triangle}}y_{e_2,e_3}^2 \Big) }\\
&= \sqrt{2(n-2)} \sqrt{ \sum_{e_2,e_3} y_{e_2,e_3}^2 \sum_{e_1} \ind{\textrm{$e_1,e_2,e_3$ form a triangle}} }
 \le \sqrt{2(n-2)},
\end{align*}
where the first two inequalities follow by the Cauchy-Schwarz inequality and the last one from the fact that for each $e_2,e_3$ there is at most one $e_1$ such that $e_1,e_2,e_3$ form a triangle. We have thus obtained $\|\E \D^3 f(X)\|_{\{1,2\}\{3\}} = \|\E \D^3 f(X)\|_{\{1,3\}\{2\}} = \|\E \D^3 f(X)\|_{\{2,3\}\{1\}} \le \sqrt{2n}$.

It remains to estimate $\|\E \D^3 f(X)\|_{\{1\}\{2\}\{3\}}$. For all $(x_e)_{e\in C(n,2)}$, $(y_e)_{e\in C(n,2)}$, $(z_e)_{e\in C(n,2)}$ of norm one we have by the Cauchy-Schwarz inequality
\begin{align*}
&\sum_{e_1,e_2,e_3}\ind{\textrm{$e_1,e_2,e_3$ form a triangle}}x_{e_1}y_{e_2} z_{e_3} = \sum_{(i_1,i_2,i_3) \in [n]\uu{3}}x_{\{i_1,i_2\}}y_{\{i_2,i_3\}}z_{\{i_1,i_3\}}\\
&\le \sum_{i_1 \in [n]} \Big(\sum_{(i_2,i_3) \in ([n]\setminus{\{i_1\}})\uu{2}} x_{\{i_1,i_2\}}^2 z_{\{i_1,i_3\}}^2\Big)^{1/2}\Big(\sum_{(i_2,i_3) \in ([n]\setminus{\{i_1\}})\uu{2}} y_{\{i_2,i_3\}}^2\Big)^{1/2}\\
&\le \sqrt{2}\sum_{i_1 \in [n]}\Big(\sum_{i_2\in [n]\setminus{\{i_1\}}} x_{\{i_1,i_2\}}^2\Big)^{1/2}\Big(\sum_{i_3\in [n]\setminus{\{i_1\}}} z_{\{i_1,i_3\}}^2\Big)^{1/2}\\
&\le \sqrt{2}\Big(\sum_{(i_1,i_2)\in [n]\uu{2}} x_{\{i_1,i_2\}}^2\Big)^{1/2}\Big(\sum_{(i_1,i_3)\in [n]\uu{2}} z_{\{i_1,i_3\}}^2\Big)^{1/2}
\le 2^{3/2},
\end{align*}
which gives $\|\E \D^3 f(X)\|_{\{1\}\{2\}\{3\}} \le 2^{3/2}$.

Using \eqref{eq:general_graph} together with the above estimates, we obtain
\begin{prop} \label{prop:triangle}For any $t >0$,
\begin{multline*}
\p\big(|Y_{K_3}(n,p) - \E Y_{K_3}(n,p) | \ge t\big)\\
\le 2\exp\Big(-\frac{1}{C}\min\Big(\frac{t^2}{L_p^6 n^3 + L_p^4p^2n^3 + L_p^2p^4 n^4},\frac{t}{L_p^3n^{1/2} +L_p^2 p n },\frac{t^{2/3}}{L_p^2}\Big)\Big),
\end{multline*}
where $L_p = \big(\log(2/p)\big)^{-1/2}$.
\end{prop}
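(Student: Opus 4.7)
The plan is to apply the consequence \eqref{eq:general_graph} of Theorem~\ref{thm:subgaussian_intro}, specialized to $H = K_3$ with $L = L_p$. All the ingredients have already been assembled in the paragraphs preceding the proposition: the representation of $Y_{K_3}(n,p)$ as a homogeneous tetrahedral polynomial $f$ of degree~$3$ in the independent Bernoulli variables $(X_e)_{e \in C(n,2)}$, the sub-Gaussian estimate $\|X_e\|_{\psi_2} \le \sqrt{2}\,(\log(2/p))^{-1/2}$, and explicit upper bounds on $\|\E\D^d f(X)\|_{\mathcal{J}}$ for every $d \in \{1,2,3\}$ and every $\mathcal{J} \in P_d$.

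First I would enumerate the six terms (one for $d=1$, two for $d=2$, three for $d=3$) that appear inside the minimum in the exponent of \eqref{eq:general_graph}, by substituting the precomputed norm estimates. Using the symmetry observed above, namely $\|\E\D^3 f(X)\|_{\{1,2\}\{3\}} = \|\E\D^3 f(X)\|_{\{1,3\}\{2\}} = \|\E\D^3 f(X)\|_{\{2,3\}\{1\}} \le \sqrt{2n}$, only one representative per equivalence class of partitions needs to be retained.

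Next I would group these terms by the value of $\#\mathcal{J}$, since all terms with a common $\#\mathcal{J}$ share the exponent $2/\#\mathcal{J}$. The three $\#\mathcal{J}=1$ contributions are $t^2/(L_p^2 p^4 n^4)$ (from $d{=}1$), $t^2/(L_p^4 p^2 n^3)$ (from $d{=}2$, $\mathcal{J}=\{1,2\}$) and $t^2/(L_p^6 n^3)$ (from $d{=}3$, $\mathcal{J}=\{1,2,3\}$); by the elementary comparison $\min(a^{-1},b^{-1},c^{-1}) \asymp (a+b+c)^{-1}$ their minimum is comparable (up to a factor~$3$) to $t^2/(L_p^6 n^3 + L_p^4 p^2 n^3 + L_p^2 p^4 n^4)$. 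Likewise, the two $\#\mathcal{J}=2$ contributions, $t/(L_p^2 p n)$ and $t/(L_p^3 n^{1/2})$, merge into $t/(L_p^3 n^{1/2} + L_p^2 p n)$, and the single $\#\mathcal{J}=3$ contribution is $(t/L_p^3)^{2/3} = t^{2/3}/L_p^2$. Absorbing all absolute constants (including the factor $\sqrt{2}$ hidden in the bound on $\|X_e\|_{\psi_2}$) into the generic constant~$C$ completes the argument.

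There is no genuine obstacle here: the proof is entirely a bookkeeping exercise wrapping Theorem~\ref{thm:subgaussian_intro} around the norm computations carried out in the paragraphs above. The only point requiring a moment of care is to verify that the $L_p$ used in the proposition's statement, namely $(\log(2/p))^{-1/2}$, differs from the quantity $\sqrt{2}\,(\log(2/p))^{-1/2}$ that bounds $\|X_e\|_{\psi_2}$ only by a universal constant, which is harmlessly absorbed into~$C$.
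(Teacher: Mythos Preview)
Your proposal is correct and follows exactly the paper's approach: the proposition is stated immediately after the sentence ``Using \eqref{eq:general_graph} together with the above estimates, we obtain'', so the paper's proof is precisely the bookkeeping you describe---plugging the six precomputed norm bounds into \eqref{eq:general_graph} and grouping by $\#\mathcal{J}$. Your write-up is actually a bit more explicit than the paper's (which leaves the grouping via $\min(a^{-1},b^{-1},c^{-1})\asymp(a+b+c)^{-1}$ implicit), but there is no substantive difference.
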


In particular for $t = \varepsilon \E Y_{K_3}(n,p) = \varepsilon \binom{n}{3} p^3$,
\begin{multline*}
\p\big(|Y_{K_3}(n,p) - \E Y_{K_3}(n,p) | \ge \varepsilon \E Y_{K_3}(n,p) \big)
\\ \le 2\exp\Big(-\frac{1}{C} \min\Big(\varepsilon^2 n^3p^6 \log^3(2/p), (\varepsilon^2 \land \varepsilon^{2/3}) n^2p^2\log(2/p)\Big)\Big).
\end{multline*}
Thus for $p \ge n^{-\frac14}\log^{-\frac12} n$ we obtain
\begin{displaymath}
\p\big(|Y_{K_3}(n,p) - \E Y_{K_3}(n,p) | \ge \varepsilon \E Y_{K_3}(n,p) \big) \le 2\exp\big(-(\varepsilon^2 \land \varepsilon^{2/3}) n^2p^2\log(2/p)\big).
\end{displaymath}
By Corollary 1.7 in \cite{JanOleRu}, if $p \ge 1/n$, then $\frac{1}{C} n^2p^2 \le M^\ast_{K_3}(n,p) \le C n^2p^2$ (recall \eqref{eq:JOR}) and so for $p \ge n^{-1/4} \log^{-1/2} n$ the estimate obtained from the above proposition is optimal. As already mentioned the optimal estimate has been recently obtained  in the full range of $p$ by Chatterjee, DeMarco and Kahn. Unfortunately it seems that using our general approach we are not able to recover the full strength of their result. From Proposition \ref{prop:triangle} one can also see that Theorem \ref{thm:subgaussian_intro}, when specialized to polynomials in $0$-$1$ random variables is not directly comparable with the family of Kim-Vu inequalities. As shown in \cite{JaRuInf} (see table 2 therein), various inequalities by Kim and Vu give for the triangle counting problem exponents $-\min(n^{1/3}p^{1/6}, n^{1/2}p^{1/2})$, $-n^{3/2}p^{3/2}$, $-np$ (disregarding logarithmic factors). Thus for ``large'' $p$ our inequality performs better than those by Kim-Vu, whereas for ``small'' $p$ this is not the case (note that the Kim-Vu inequalities give meaningful bounds for $p \ge C n^{-1}$ while ours only for $p \ge C n^{-1/2}$). As already mentioned in the introduction the fact that our inequalities degenerate for small $p$ is not surprising as even for sums of independent $0$-$1$ random variables, when $p$ becomes small, general inequalities for the sums of independent random variables with sub-Gaussian tails do not recover the correct tail behaviour (the $\|\cdot\|_{\psi_2}$ norm of the summands becomes much larger than the variance).

\subsubsection{Counting cycles} \label{sec:cycles}

We will now generalize Proposition \ref{prop:triangle} to cycles of arbitrary length. If $H$ is a cycle of length $k$, then by Corollary 1.7 in~\cite{JanOleRu}, $\frac{1}{C} n^2p^2 \le M^\ast_{H}(n,p) \le C n^2p^2$ for $p\ge 1/n$. Thus the bounds for the upper tail from~\eqref{eq:JOR} imply that for $p \ge 1/n$,
\begin{displaymath}
\exp\big(-C(k,\varepsilon)n^2p^2\log(1/p)\big) \le \p\big(Y_H(n,p) \ge (1+\varepsilon)\E Y_H(n,p) \big) \le \exp\big(-c(k,\varepsilon)n^2p^2\big)
\end{displaymath}
for every $\varepsilon > 0$ for which the above probability is not zero.

We will show that similarly as for triangles, Theorem~\ref{thm:subgaussian_intro} allows to strengthen the upper bound if $p$ is not too small with respect to $n$. More precisely, we have the following
\begin{prop}\label{prop:cycles} Let $H$ be a cycle of length $k$. Then for every $t > 0$,
\begin{displaymath}
\p\big(|Y_H(n,p) - \E Y_H(n,p)| \ge t\big)
\le 2\exp\Big(-\frac{1}{C_k} \Big(\frac{t^2}{L_p^{2k}n^k} \wedge \min_{\substack{1\le l\le d\le k \colon\\ d<k\;\textup{or}\; l>1}}\Big(\frac{t^{2/l}}{L_p^{2d/l} p^{2(k-d)/l}n^{(2k-d-l)/l}}\Big)\Big)\Big),
\end{displaymath}
where $L_p = \big(\log(2/p)\big)^{-1/2}$. In particular for every $\varepsilon > 0$ and $p \ge n^{-\frac{k-2}{2(k-1)}}\log^{-1/2} n$,
\begin{displaymath}
\p\big(Y_H(n,p) \ge (1+\varepsilon)\E Y_H(n,p)\big) \le 2\exp\Big(-\frac{1}{C_k} (\varepsilon^2 \land \varepsilon^{2/k}) n^2p^2\log(2/p)\Big).
\end{displaymath}
\end{prop}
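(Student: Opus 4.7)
The plan is to apply Theorem~\ref{thm:subgaussian_intro} through its specialization~\eqref{eq:general_graph} to the polynomial $f$ for which $Y_H(n,p) = f(X)$, and to match the resulting exponent term-by-term with the one claimed in the proposition. Writing $l = \#\mathcal{J}$, the single pair $(d,l)=(k,1)$ produces the isolated term $t^2/(L_p^{2k} n^k)$ and will require the bound $\|\E \D^k f(X)\|_{\{[k]\}} \le C_k\, n^{k/2}$; every other pair (those with $d<k$ or $l>1$) contributes to the minimum and will require the uniform estimate
$$\|\E \D^d f(X)\|_\mathcal{J} \le C_k\, p^{k-d}\, n^{(2k-d-l)/2}.$$
Once both estimates are in hand, the first inequality of the proposition follows from~\eqref{eq:general_graph} by direct substitution.

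My first step is to write $\E \D^d f(X)$ in explicit combinatorial form. Since $f$ is tetrahedral and the $X_e$ are independent with $\E X_e=p$,
$$(\E \D^d f(X))_{e_1,\ldots,e_d}=\frac{p^{k-d}}{2k}\, N(e_1,\ldots,e_d),$$
where $N(e_1,\ldots,e_d)$ counts embeddings $\ii\in[n]^{\underline{k}}$ of $C_k$ equipped with an injection $\sigma\colon[d]\to[k]$ such that $e_j=\{i_{\sigma(j)},i_{\sigma(j)+1\bmod k}\}$. The matrix $N$ is supported on tuples of distinct edges whose union is a linear forest embeddable into $C_k$; on its support one has $N\le C_k\, n^{k-v}$, where $v$ is the number of vertices of the forest. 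In particular, for $d=k$ the only nonzero entries correspond to edges forming a $k$-cycle, on which $N\le 2k$, and the support has size of order $n^k$; this immediately yields the bound $\|\E \D^k f(X)\|_{\{[k]\}}\le C_k\, n^{k/2}$ needed for the isolated term.

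The main obstacle is to prove the uniform estimate in $(d,\mathcal{J})$. My approach is to expand
$$\|\E \D^d f(X)\|_\mathcal{J}=\frac{p^{k-d}}{2k}\sup_{\|x^{(s)}\|_2\le 1}\sum_\sigma\sum_{\ii\in[n]^{\underline{k}}}\prod_{s=1}^{l} x^{(s)}_{(\{i_{\sigma(j)},i_{\sigma(j)+1}\})_{j\in J_s}},$$
and to bound the inner sum by iterated Cauchy--Schwarz, peeling off the $k$ vertices of the cycle one by one. Each \emph{free} vertex --- one not pinned by the arguments of some $x^{(s)}$ --- contributes a factor of $\sqrt{n}$; careful accounting over the $d$ pinned edges and the $l$ unit-norm constraints produces exactly $n^{(2k-d-l)/2}$. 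Corollary~\ref{cor_norm_monotonicity} allows me to reduce, at the cost of a factor $C_k$, to a single level-set pattern of $\ii$, and Lemma~\ref{lem_tensor_product} separates the deterministic factor $p^{k-d}$. The combinatorial structure generalizes the triangle computation preceding the proposition; the cycle length $k$ is the only additional parameter.

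For the second assertion I substitute $t=\varepsilon\, \E Y_H(n,p)$, which is of order $\varepsilon\, n^k p^k$. The isolated term becomes $\varepsilon^2 n^k p^{2k} \log^k(2/p)$ and dominates $\varepsilon^2 n^2 p^2 \log(2/p)$ precisely when $n^{k-2} p^{2(k-1)} \log^{k-1}(2/p)\ge c_k$, equivalently $p \ge C_k\, n^{-(k-2)/(2(k-1))} \log^{-1/2}(2/p)$, which is comfortably included in the assumed range of $p$. A short computation, using $l\le d$, reduces every $(d,l)$ term in the minimum to $\varepsilon^{2/l}\cdot n\cdot (n p^2 \log(2/p))^{d/l}$ up to constants depending on $k$, and this exceeds $c_k\, \varepsilon^{2/l} n^2 p^2 \log(2/p)$ under the same threshold. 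Since $\min_{1\le l\le k}\varepsilon^{2/l}=\varepsilon^2\wedge\varepsilon^{2/k}$, the announced concentration bound follows.
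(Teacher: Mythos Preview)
Your approach is essentially the paper's: express $\E\D^d f(X)$ as a sum over edge-labellings $\sigma$ (the paper writes this as a sum over $\ee\in E(H)^{\underline d}$ and packages it into Lemma~\ref{le:norm_estimates_cycles}), bound the $\|\cdot\|_\mathcal J$ norm of each summand by iterated Cauchy--Schwarz over the cycle vertices (the paper isolates this step as Lemma~\ref{lemma:norms-for-cycles}, valid for arbitrary $H$), and then plug into~\eqref{eq:general_graph}.

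Two points where you should sharpen the write-up to match what the argument actually needs. First, the ``careful accounting'' that yields the exponent $(2k-d-l)/2$ is the crux, and your ``free vertex $=\sqrt n$'' heuristic is not literally what happens: vertices \emph{outside} $H_0(\ee)$ contribute $n$ each, and among the vertices of $H_0(\ee)$ only those lying in a single $V(H_r)$ contribute $\sqrt n$. The paper closes this by observing that in a cycle every vertex has degree two, so each $v\in V(H_0)$ lies in at most two of the $V(H_r)$; inclusion--exclusion then gives $v(H_0)+\#\{v\in V(H_0)\text{ in }\ge 2\text{ parts}\}=\sum_r v(H_r)\ge d+l$ (each $H_r$ being a nonempty forest when $(d,l)\ne(k,1)$), which is exactly the inequality you need. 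Second, your invocations of Corollary~\ref{cor_norm_monotonicity} and Lemma~\ref{lem_tensor_product} are off: the index set here is $C(n,2)$, the support of $\E\D^d f(X)$ already sits on distinct edges, and $p^{k-d}$ is a scalar, so neither tool is doing any work. The reduction to a single $\sigma$ is just the triangle inequality for $\|\cdot\|_\mathcal J$, as in~\eqref{norms-for-subgraph-count}. Your computation for the second assertion is correct and matches the paper's.
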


To prove the above proposition we need to estimate the corresponding $\|\cdot\|_\mathcal{J}$ norms. Since a major part of the argument does not rely on the fact that $H$ is a cycle and bounds on $\|\cdot\|_\mathcal{J}$ norms may be of independent interest, we will now consider arbitrary graphs. Let thus $H$ be a fixed graph with no isolated vertices.

Similarly to~\cite{JanOleRu}, it will be more convenient to count ``ordered'' copies of a graph $H$ in $G(n,p)$. Namely, for $H = ([k], E_H)$, each sequence of $k$ distinct vertices in the clique $K_n$, $\ii \in [n]^{\underline{k}}$ determines an ordered copy $G_{\ii}$ of $H$ in $K_n$, where $G_\ii = \ii(H)$, i.e., $V(G_\ii) = \ii([k])$ and $E(G_\ii) = \{ \ii(e) \colon e \in E(H) \} = \left\{ \{i_u, i_v\} \colon \{u, v\} \in E(H) \right\}$. Define
\[
  X_H(n,p) := \sum_{\ii \in [n]^{\underline{k}}} \ind{G_{\ii} \subseteq G(n,p)}
                = \sum_{\ii \in [n]^{\underline{k}}} \; \prod_{\tilde{e} \in E(G_\ii)} X_{\tilde{e}}.
\]
Clearly $X_H(n,p) = \# \textup{Aut}(H) Y_H(n,p)$ and $X_H(n,p) = f(X)$ where
\begin{align}\label{eq:counting_function}
  f(x) := \sum_{\ii \in [n]^{\underline{k}}} \; \prod_{\tilde{e} \in E(G_{\ii})} x_{\tilde{e}}
        = \sum_{\ii \in [n]^{\underline{k}}} \; \prod_{e \in E(H)} x_{\ii(e)}.
\end{align}
A sequence of distinct edges $(\tilde{e}_1, \ldots, \tilde{e}_d) \in E(K_n)^{\underline{d}}$ determines a subgraph $G_0 \subseteq K_n$ with $V(G_0) = \bigcup_{i=1}^d \tilde{e}_i$, $E(G_0) = \{\tilde{e}_1, \ldots, \tilde{e}_d\}$. Note that
\[
  \partial_{G_0} f(x) := \frac{\partial^d f(x)}{\partial x_{\tilde{e}_1} \cdots \partial x_{\tilde{e}_d}} =
    \sum_{\ii \in [n]^{\underline{k}} \colon G_{\ii} \supseteq G_0} \;
       \prod_{\tilde{e} \in E(G_{\ii}) \setminus E(G_0)} x_{\tilde{e}}
\]
and thus
\[
  \E \partial_{G_0} f(X) = p^{e(H) - d} \#\{ \ii \in [n]^{\underline{k}} \colon G_0 \subseteq G_\ii\}
\]
Consider $\ee = (e_1, \ldots, e_d) \in E(H)^{\underline{d}}$ and let $H_0(\ee)$ be the subgraph of $H$ with $V(H_0(\ee)) = \bigcup_{i=1}^d e_i$, $E(H_0(\ee)) = \{e_1, \ldots, e_d\}$. Clearly, for any $\ii \in [n]^{\underline{k}}$, $\ii(H_0(\ee)) \subseteq G_\ii$. We write $(e_1, \ldots e_d) \simeq (\tilde{e}_1, \ldots, \tilde{e}_d)$ if there exists $\ii \in [n]^{\underline{k}}$ such that $\ii(e_j) = \tilde{e}_j$ for $j = 1, \ldots, d$.
Note that given $(\tilde{e}_1, \ldots, \tilde{e}_d) \in E(K_n)^{\underline{d}}$ and the corresponding graph $G_0$,
\begin{align*}
  \#\{ \ii \in [n]^{\underline{k}} \colon G_0 \subseteq G_\ii\}
&= \sum_{\ee \in E(H)^{\underline{d}}} \#\{ \ii \in [n]^{\underline{k}} \colon \ii(e_j) = \tilde{e}_j \text{ for $j = 1, \ldots, d$}\} \\
 &= \sum_{\ee \in E(H)^{\underline{d}} } 2^{s(H_0(\ee))} (n - v(H_0(\ee)))^{\underline{k - v(H_0(\ee))}} \ind{(\tilde{e}_1, \ldots, \tilde{e}_d) \simeq \ee},
\end{align*}
where for a graph $G$, $v(G)$ is the number of vertices of $G$ and $s(G)$ is the number of edges in $G$ with no other adjacent edge. Therefore,
\[
  \E \D^d f(X) = p^{e(H) - d} \sum_{\ee \in E(H)^{\underline{d}} } 2^{s(H_0(\ee))} (n - v(H_0(\ee)))^{\underline{k - v_{H_0(\ee)}}} \left( \ind{(\tilde{e}_1, \ldots, \tilde{e}_d) \simeq \ee} \right)_{(\tilde{e}_1 \ldots, \tilde{e}_d)}.
\]
Let $\mathcal{J}$ be a partition of $[d]$. By the triangle inequality for the norms $\norm{\cdot}_{\mathcal{J}}$,
\begin{equation}\label{norms-for-subgraph-count}
  \norm{\E \D^d f(X)}_{\mathcal{J}} \le p^{e(H) - d} \sum_{\ee\in E(H)^{\underline{d}}} 2^{s(H_0(\ee))} n^{k - v(H_0(\ee))} \norm{\left( \ind{(\tilde{e}_1, \ldots, \tilde{e}_d) \simeq \ee} \right)_{(\tilde{e}_1 \ldots, \tilde{e}_d)}}_\mathcal{J}.
\end{equation}
The norms appearing on the right hand side of~(\ref{norms-for-subgraph-count}) are handled by the following
\begin{lemma}\label{lemma:norms-for-cycles}
  Fix $1 \le d \le e(H)$, $\ee = (e_1, \ldots, e_d) \in E(H)^{\underline{d}}$ and $\mathcal{J} = \{J_1,\ldots,J_l\} \in P_d$. Let $H_0 = H_0(\ee)$ and for $r = 1, \ldots, l$, let $H_r$ be a subgraph of $H_0$ spanned by the set of edges $\{e_j \colon j \in J_r\}$. Then,
\begin{multline*}
  \norm{\left( \ind{(\tilde{e}_1, \ldots, \tilde{e}_d) \simeq (e_1, \ldots, e_d)} \right)_{(\tilde{e}_1 \ldots, \tilde{e}_d)}}_{\mathcal{J}} \le
  2^{-s(H_0) + \frac12 \sum_{r=1}^l s(H_r)} \\
\times n^{\frac12 \# \{ v \in V(H_0) \colon \textup{$v \in V(H_r)$ for exactly one $r \in [l]$} \}}.
\end{multline*}
\end{lemma}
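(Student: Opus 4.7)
The plan is to express $\|\mathbf{1}_{\tilde{\ee}\simeq\ee}\|_{\mathcal{J}}$ as a weighted sum over injections $\phi\colon V(H_0)\to[n]$ and then bound that sum by Cauchy--Schwarz applied vertex by vertex, treating vertices of $V(H_0)$ lying in a single $V(H_r)$ differently from those lying in several. Fix nonnegative vectors $y^{(r)}$ with $\|y^{(r)}\|_2\le 1$; it suffices to bound
\[
  T \;:=\; \sum_{(\tilde e_1,\ldots,\tilde e_d)\simeq(e_1,\ldots,e_d)}\; \prod_{r=1}^{l} y^{(r)}_{(\tilde e_j)_{j\in J_r}}.
\]

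My first step is to show that exactly $2^{s(H_0)}$ injections $\phi\colon V(H_0)\to[n]$ produce any given admissible tuple. If $\phi,\phi'$ yield the same tuple, then $\phi'=\phi\circ\sigma$ for some bijection $\sigma$ of $V(H_0)$ fixing every $e_j$ as an unordered pair. Any vertex $v$ lying in two distinct edges equals the intersection of those edges, so $\sigma(v)=v$; hence $\sigma$ can only swap endpoints of edges isolated in $H_0$, giving exactly $2^{s(H_0)}$ possibilities. Consequently,
\[
  T \;=\; \frac{1}{2^{s(H_0)}} \sum_{\phi\ \mathrm{inj.}} \prod_{r=1}^{l} F_r(\phi|_{V(H_r)}),
\]
where $F_r(\psi):=y^{(r)}_{(\psi(e_j))_{j\in J_r}}$ for injective $\psi\colon V(H_r)\to[n]$ and $F_r(\psi):=0$ otherwise. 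Dropping the global injectivity of $\phi$ and summing over arbitrary maps $V(H_0)\to[n]$ can only enlarge the sum.

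Next, partition $V(H_0)$ into the set $V^{\mathrm{leaf}}$ of vertices in exactly one $V(H_r)$ and the set $V^{\mathrm{central}}$ of those in at least two, and set $V_r^{\mathrm{leaf}}:=V(H_r)\cap V^{\mathrm{leaf}}$, $V_r^{\mathrm{central}}:=V(H_r)\cap V^{\mathrm{central}}$. Since the sets $V_r^{\mathrm{leaf}}$ are pairwise disjoint and only $F_r$ depends on $\phi|_{V_r^{\mathrm{leaf}}}$, Cauchy--Schwarz in each variable $\phi(v)\in[n]$ with $v\in V^{\mathrm{leaf}}$ yields
\[
  \sum_{\phi|_{V^{\mathrm{leaf}}}} \prod_{r=1}^{l} F_r(\phi|_{V(H_r)}) \;\le\; n^{|V^{\mathrm{leaf}}|/2}\prod_{r=1}^{l} G_r(\phi|_{V_r^{\mathrm{central}}}), \qquad G_r:=\Bigl(\sum_{\phi|_{V_r^{\mathrm{leaf}}}} F_r^2\Bigr)^{1/2}.
\]
To handle the remaining sum over $\phi|_{V^{\mathrm{central}}}$ I would invoke the multilinear Finner (Loomis--Whitney type) inequality: since every central vertex belongs to at least two of the sets $V_r^{\mathrm{central}}$, one has $\sum_{\phi|_{V^{\mathrm{central}}}} \prod_r G_r(\phi|_{V_r^{\mathrm{central}}}) \le \prod_r \|G_r\|_2$.

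It remains to bound $\|G_r\|_2^2=\sum_{\psi\colon V(H_r)\to[n]\ \mathrm{inj.}} F_r(\psi)^2$. The same count of edge-preserving permutations applied to $H_r$ in place of $H_0$ shows that each admissible tuple in $E(K_n)^{J_r}$ arises from exactly $2^{s(H_r)}$ injective $\psi$, whence $\|G_r\|_2^2=2^{s(H_r)}\sum_{\tilde{\ee}}(y^{(r)}_{\tilde{\ee}})^2\le 2^{s(H_r)}$. Assembling the factors $2^{-s(H_0)}$, $n^{|V^{\mathrm{leaf}}|/2}$ and $\prod_r 2^{s(H_r)/2}$ gives exactly the asserted bound. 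The main obstacle I anticipate is the careful bookkeeping of the two multiplicity counts $2^{s(H_0)}$ and $2^{s(H_r)}$, together with a clean invocation of the multilinear inequality on the central vertices; the latter is where the covering hypothesis $\sum_r \mathbf{1}_{V_r^{\mathrm{central}}}(v)\ge 2$ for $v\in V^{\mathrm{central}}$ is decisive, without which one would accumulate extraneous factors of $n$.
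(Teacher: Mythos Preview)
Your proof is correct and follows essentially the same route as the paper's. Both arguments first rewrite the sum over edge-tuples as $2^{-s(H_0)}$ times a sum over injections $V(H_0)\to[n]$, then pick up the factor $2^{s(H_r)}$ from the $\ell_2$ constraint on each $y^{(r)}$, and in between reduce to the purely combinatorial estimate \[\sum_{\phi}\prod_{r=1}^l G_r(\phi|_{V_r})\le n^{\frac12\#\{v:\ v\in V_r\ \text{for exactly one }r\}}\prod_r\|G_r\|_2.\] The only organisational difference is that you split vertices into ``leaf'' and ``central'' and then invoke Finner's hypergraph H\"older inequality (valid here because every central vertex lies in at least two of the $V_r^{\mathrm{central}}$, so the fractional-cover condition with exponents $p_r=2$ is met), whereas the paper proves this inequality from scratch by peeling off one vertex at a time and applying Cauchy--Schwarz at each step---which is precisely the standard inductive proof of Finner's inequality specialised to this situation. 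Your version is a bit more modular; the paper's is self-contained. Neither gains anything over the other in strength or generality.
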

\begin{proof}
We shall bound the sum
\begin{equation}\label{sum-defining-the-norm-for-subgraph-count}
  \sum_{\tilde{e}_1, \ldots, \tilde{e}_d \in E(K_n)} \ind{(\tilde{e}_1, \ldots, \tilde{e}_d) \simeq \ee} \prod_{r=1}^l x_{(\tilde{e}_j)_{j \in J_r}}^{(r)}
\end{equation}
under the constraints $\sum_{(\tilde{e}_j)_{j \in J_r} \in E(K_n)^{J_r}} \left(x_{(\tilde{e}_j)_{j \in J_r}}^{(r)}\right)^2 \le 1$ for $r = 1, \ldots, l$. Note that we can assume $x^{(r)} \ge 0$ for all $r \in [l]$. Rewrite the sum~(\ref{sum-defining-the-norm-for-subgraph-count}) as the sum over a sequence of vertices instead of edges:
\[
  2^{-s(H_0)} \sum_{\ii \in [n]^{\underline{V(H_0)}}} \; \prod_{r=1}^l x_{(\ii(e_j))_{j \in J_r}}^{(r)},
\]
where for two sets $A,B$, $A^{\underline{B}}$ is the set of 1-1 functions from $B$ to $A$.
Further note that it is enough to prove the desired bound for the sum
\begin{equation}\label{sum2-for-the-norm-for-subgraph-count}
  2^{-s(H_0)} \sum_{\ii \in [n]^{\underline{V(H_0)}}} \; \prod_{r=1}^l y_{\ii_{V(H_r)}}^{(r)}
\end{equation}
under the constraints $2^{-s(H_r)} \sum_{\ii \in [n]^{\underline{V(H_r)}}} \left( y_{\ii_{V(H_r)}}^{(r)} \right)^2 \le 1$ for each $r = 1, \ldots, l$. Indeed, given $x$'s, for each $r = 1, \ldots, l$ and all $\ii \in [n]^{\underline{V(H_r)}}$ take
$y_{\ii_{V(H_r)}}^{(r)} = x_{(\ii(e_j)_{j \in J_r})}^{(r)}$ and notice that the sum~(\ref{sum2-for-the-norm-for-subgraph-count}) equals the sum~(\ref{sum-defining-the-norm-for-subgraph-count}) while the constraints for $x$'s imply the constraints for $y$'s. Finally, by homogeneity and the fact that the sum \eqref{sum2-for-the-norm-for-subgraph-count} does not depend on the full graph structure but only on the sets of vertices of the graphs $H_r$, the lemma will follow from the statement: For a sequence of finite, non-empty sets $V_1, \ldots, V_l$, let $V = V_1 \cup \ldots \cup V_l$. Then
\begin{equation}\label{sum3-for-the-norm-for-subgraph-count}
  \sum_{\ii \in [n]^{\underline{V}}} \; \prod_{r=1}^l y_{\ii_{V_r}}^{(r)}
  \le n^{\frac12 \# \{ v \in V \colon \text{$v \in V_r$ for exactly one $r \in [l]$} \}}
\end{equation}
for $y^{(1)}, \ldots, y^{(l)} \ge 0$ satisfying
\begin{equation}\label{constraint-for-then-norm-for-subgraph-count}
  \sum_{\ii \in [n]^{\underline{V_r}}} \left( y_{\ii_{V_r}}^{(r)} \right)^2 \le 1.
\end{equation}

We prove (\ref{sum3-for-the-norm-for-subgraph-count}) by induction on $\# V$. For $V = \emptyset$ (and $l=0$), (\ref{sum3-for-the-norm-for-subgraph-count}) holds trivially. For the induction step fix any $v_0 \in V$ and put $R = \{r \in [l] \colon v_0 \in V_r\}$. We write
\[
  \sum_{\ii \in [n]^{\underline{V}}}\;\prod_{r=1}^l y_{\ii_{V_r}}^{(r)}
=
  \sum_{\ii \in [n]^{\underline{V \setminus \{v_0\}}}} \left( \left(\prod_{r \in [l] \setminus R} y_{\ii_{V_r}}^{(r)} \right)
\sum_{i_{v_0} \in [n] \setminus \ii(V \setminus \{v_0\})} \; \prod_{r \in R} y_{\ii_{V_r}}^{(r)} \right).
\]
We bound the inner sum using the Cauchy-Schwarz inequality. If $\# R \ge 2$, we get
\[
  \sum_{i_{v_0} \in [n] \setminus \ii(V \setminus \{v_0\})} \; \prod_{r \in R} y_{\ii_{V_r}}^{(r)}
\le \prod_{r \in R} \left( \sum_{i_{v_0} \in [n] \setminus \ii(V \setminus \{v_0\})}  \left( y_{\ii_{V_r}}^{(r)} \right)^2 \right)^{1/2},
\]
and if $R = \{r_0\}$ then
\[
  \sum_{i_{v_0} \in [n] \setminus \ii(V \setminus \{v_0\})}  y_{\ii_{V_{r_0}}}^{(r_0)}
\le \sqrt{n} \left( \sum_{i_{v_0} \in [n] \setminus \ii(V \setminus \{v_0\})}  \left( y_{\ii_{V_{r_0}}}^{(r_0)} \right)^2 \right)^{1/2}.
\]
Now, for each $r \in R$ put $W_r = V_r \setminus \{v_0\}$ and define
\[
  z_{\ii_{W_r}}^{(r)} = \left( \sum_{i_{v_0} \in [n] \setminus \ii(W_r )} \left( y_{\ii_{V_r}}^{(r)} \right)^2 \right)^{1/2} \text{ for all $\ii_{W_r} \in [n]^{\underline{W_r}}$}.
\]
Note that if $W_r = \emptyset$ then $z^{(r)}$ is a scalar and by~(\ref{constraint-for-then-norm-for-subgraph-count}), $0 \le z^{(r)} \le 1$.
For $r \in [l] \setminus R$, just put $W_r = V_r$ and $z^{(r)} \equiv y^{(r)}$. Let $L = \{ r \in [l] \colon W_r \neq \emptyset\}$. Combining the estimates obtained above, we arrive at
\[
  \sum_{\ii \in [n]^{\underline{V}}} \; \prod_{r=1}^l y_{\ii_{V_r}}^{(r)} \le (\sqrt{n})^{\ind{\text{$v_0 \in V_r$ for exactly one $r \in [l]$}}}
  \sum_{\ii \in [n]^{\underline{V \setminus \{v_0\}}}} \;
    \prod_{r \in L} z_{\ii_{W_r}}^{(r)}.
\]
Now we use the induction hypothesis for the sequence of sets $(W_r)_{r \in L}$ and the vectors $z^{(r)}$, $r \in L$ (note that $\sum_{\ii\in[n]^{\underline{W_r}}}(z_{\ii_{W_r}}\ub{r})^2 \le 1$).
\end{proof}
\paragraph{Remark}
  The bound in Lemma~\ref{lemma:norms-for-cycles} is essentially optimal, at least for large $n$, say $n \ge 2 k$. To see this let us analyse optimality of~(\ref{sum3-for-the-norm-for-subgraph-count}) under the constraints~(\ref{constraint-for-then-norm-for-subgraph-count}) (it is easy to see that this is equivalent to the optimality in the original problem). Denote $V_0 = \{ v \in V \colon v \in V_r \text{ for exactly one $r \in [l]$}\}$. Fix any $\ii^{(0)} \in [n]^{\underline{k}}$. Then for $r =1, \ldots, l$ take
\[
  y_{\ii_{V_r}}^{(r)} = \begin{cases}
    n^{-\frac12 \#(V_r \cap V_0)} & \text{if $\ii_{V_r \setminus V_0} \equiv \ii_{V_r \setminus V_0}^{(0)}$} \\
    0 & \text{otherwise.}
  \end{cases}
\]
The vectors $y\ub{r}$ satisfy the constraints~(\ref{constraint-for-then-norm-for-subgraph-count}) and
\[ \begin{split}
  \sum_{\ii \in [n]^{\underline{V}}} \; \prod_{r=1}^l y_{\ii_{V_r}}^{(r)}
 &= \sum_{\ii \in [n]^{\underline{V}} \colon \ii_{V \setminus V_0} \equiv \ii_{V \setminus V_0}^{(0)}} \prod_{r=1}^l n^{-\frac12 \#(V_r \cap V_0)} \\[1ex]
     &= \left(n - \#(V \setminus V_0)\right)^{\underline{\# V_0}} \, n^{-\frac12 \# V_0} \ge (n/2)^{\# V_0} n^{-\frac12 \# V_0} = 2^{-\# V_0} n^{\frac12 \# V_0}.
\end{split} \]

\medskip
Combining Lemma \ref{lemma:norms-for-cycles} with \eqref{norms-for-subgraph-count} we obtain
\begin{lemma} \label{le:norm_estimates_cycles}
Let $H$ be any graph with $k$ vertices, which are not isolated, and let $f$ be defined by \eqref{eq:counting_function}. Then for any $1\le d\le e(H)$ and any $\mathcal{J} = \{J_1,\ldots,J_l\} \in P_d$,
\begin{multline*}
\|\E \D^d f(X)\|_\mathcal{J} \\
\le p^{e(H)-d}\sum_{\ee\in E(H)^{\underline{d}}}2^{\frac{1}{2}\sum_{r=1}^l s(H_r(\ee))} n^{k - v(H_0(\ee)) + \frac{1}{2}\#\{v\in V(H_0(\ee))\colon \textup{$v \in V(H_r(\ee))$ for exactly one $r\in[l]$}\}},
\end{multline*}
where for $\ee \in E(H)^{\underline{d}}$ and $r \in [l]$, $H_r(\ee)$ is the subgraph of $H_0(\ee)$ spanned by $\{e_j\colon j\in J_r\}$.
\end{lemma}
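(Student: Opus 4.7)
The statement is essentially a direct consequence of two ingredients established immediately before it, so my plan is simply to combine them carefully and verify that the exponents line up.

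First, I would recall the identity derived just above for the expected derivative tensor, namely
\[
\E \D^d f(X) = p^{e(H) - d} \sum_{\ee \in E(H)^{\underline{d}}} 2^{s(H_0(\ee))} (n - v(H_0(\ee)))^{\underline{k - v(H_0(\ee))}} \left( \ind{(\tilde{e}_1, \ldots, \tilde{e}_d) \simeq \ee} \right)_{(\tilde{e}_1, \ldots, \tilde{e}_d)}.
\]
By the triangle inequality for the norm $\|\cdot\|_\mathcal{J}$ and the trivial bound $(n-v(H_0(\ee)))^{\underline{k-v(H_0(\ee))}} \le n^{k - v(H_0(\ee))}$, this yields inequality \eqref{norms-for-subgraph-count}, which expresses $\|\E \D^d f(X)\|_\mathcal{J}$ as a sum over $\ee \in E(H)^{\underline{d}}$ of an explicit polynomial factor in $n$ and $p$ times the tensor norm $\|(\ind{(\tilde{e}_1, \ldots, \tilde{e}_d) \simeq \ee})\|_\mathcal{J}$.

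Next I would invoke Lemma \ref{lemma:norms-for-cycles}, which bounds each such indicator norm by
\[
2^{-s(H_0(\ee)) + \tfrac12 \sum_{r=1}^l s(H_r(\ee))} \, n^{\tfrac12 \#\{v \in V(H_0(\ee)) \colon v \in V(H_r(\ee)) \text{ for exactly one } r\}}.
\]
Substituting this bound into \eqref{norms-for-subgraph-count} produces, inside the sum over $\ee$, the combined power of two $2^{s(H_0(\ee))} \cdot 2^{-s(H_0(\ee)) + \tfrac12 \sum_r s(H_r(\ee))} = 2^{\tfrac12 \sum_{r=1}^l s(H_r(\ee))}$ and the combined power of $n$ equal to $k - v(H_0(\ee)) + \tfrac12 \#\{v \in V(H_0(\ee)) \colon v \in V(H_r(\ee)) \text{ for exactly one } r\}$. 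Multiplying through by $p^{e(H)-d}$ gives precisely the claimed bound.

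Since both inputs are already proved, there is essentially no real obstacle here; the proof is a one-line application of the two preceding results. The only thing worth double-checking is the bookkeeping: that the factors $2^{s(H_0(\ee))}$ (coming from the number of orderings of the edges of $G_\ii$ that realize a fixed sequence $\tilde{\ee}$) cancel against the $2^{-s(H_0(\ee))}$ in Lemma \ref{lemma:norms-for-cycles} exactly, and that the interpretation of $H_r(\ee)$ as the subgraph of $H_0(\ee)$ spanned by $\{e_j\colon j \in J_r\}$ is consistent between the two statements. Once this verification is done, the lemma follows by one line.
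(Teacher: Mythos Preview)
Your proposal is correct and follows exactly the same route as the paper: the lemma is obtained by combining the triangle-inequality bound \eqref{norms-for-subgraph-count} with Lemma~\ref{lemma:norms-for-cycles}, and the $2^{s(H_0(\ee))}$ factors cancel precisely as you note.
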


We are now ready for
\begin{proof}[Proof of Proposition \ref{prop:cycles}]
We will use Lemma \ref{le:norm_estimates_cycles} to estimate $\|\E \D^d f(X)\|_\mathcal{J}$ for any $d \le k$ and $\mathcal{J} \in P_d$ with $\#\mathcal{J} = l$. Note that for any $\ee\in \E(H)^{\underline{d}}$,
\begin{multline*}
  v(H_0(\ee)) - \frac12 \# \{ v \in V(H_0(\ee)) \colon \text{$v \in V(H_r(\ee))$ for exactly one $r \in [l]$}\} \\[1ex]
  = \frac12 \big( v(H_0(\ee)) + \#\{ v \in V(H_0(\ee)) \colon \text{$v$ belongs to more than one $V(H_r(\ee))$}\} \big) \\[1ex]
  \begin{cases} = k/2 & \text{if $d = k$ and $l=1$,} \\[1ex]
                                  \ge \frac12 (d+l) & \text{otherwise},
                    \end{cases}
\end{multline*}
where to get the second inequality we used the fact that each vertex of $H$ has degree two and the inclusion-exclusion formula. Thus we obtain
\begin{align*}
  \|\E\D^k f(X)\|_{\{[k]\}} &\le n^{k/2}, \\
  \|\E\D^d f(X)\|_\mathcal{J} &\le C_k p^{k-d}n^{k - \frac{1}{2}d - \frac{1}{2}l} \quad \text{if $d < k$ or $l > 1$}.
\end{align*}
Together with~\eqref{eq:general_graph} this yields the first inequality of the proposition. Using the fact that $\E Y_H(n,p) \ge \frac{1}{C_k} n^k p^k$, the second inequality follows by simple calculations.
\end{proof}

\section{Refined inequalities for polynomials in independent random variables satisfying the modified log-Sobolev inequality}\label{sec:Weibull}
In this section we refine the inequalities which can be obtained from Theorem \ref{thm:main} for polynomials in independent random variables satisfying the $\beta$-modified log-Sobolev inequality \eqref{eq:modifiedLS} with $\beta > 2$. To this end we will use Theorem \ref{thm:AidaStroock} together with a result from \cite{Chaos3d}, which is a counterpart of Theorem \ref{thm_Latala_dec} for homogeneous tetrahedral polynomials in general independent symmetric random variables with log-concave tails, however only of degree at most 3. We recall that for a set $I$, by $P_I$ we denote the family of partitions of $I$ into pairwise disjoint, nonempty sets.

Theorems 3.1 and 3.2 and 3.4 from \cite{Chaos3d} specialized to Weibull variables can be translated into

\begin{theorem}\label{thm:Weibull_chaos} Let $\alpha \in [1,2]$ and let $Y_1,\ldots,Y_n$ be a sequence of i.i.d. symmetric random variables satisfying $\p(|Y_i| \ge t) = \exp(-t^\alpha)$. Define $Y = (Y_1,\ldots,Y_n)$ and let $Z_1,\ldots,Z_d$ be independent copies of $Y$.
Consider a $d$-indexed matrix $A$. Define also
\begin{equation}\label{eq:mdpA}
m_d(p, A) = \sum_{I\subseteq [d]} \sum_{\mathcal{J} \in P_I}\sum_{\mathcal{K}\in P_{[d]\setminus I}} p^{\#\mathcal{J}/2 + \#\mathcal{K}/\alpha}\|A\|_{\mathcal{J}|\mathcal{K}},
\end{equation}
where for $\mathcal{J} = \{J_1,\ldots,J_r\}\in P_I$ and $\mathcal{K} = \{K_1,\ldots,K_k\} \in P_{[d]\setminus I}$,
\begin{align*}
\|A\|_{\mathcal{J}|\mathcal{K}}
=\sum_{s_1\in K_1,\ldots,s_k \in K_k}\sup\Big\{&\sum_{\ii\in[n]^d} a_{\ii}\prod_{l=1}^r x\ub{l}_{\bfi_{J_l}}\prod_{l=1}^k y\ub{l}_{\bfi_{J_l}}\colon
\|(x\ub{l}_{\bfi_{J_l}})\|_2\leq 1, \,\textrm{for $1\le l \le r$},  \\
&\sum_{i_{s_l} \le n}\|(y\ub{l}_{\bfi_{K_l}})_{\ii_{K_l \setminus \{s_l\}}}\|_2^\alpha \leq 1,\,\textrm{for $1\le l \le k$}\Big\}.
\end{align*}
If $d \le 3$, then for any $p \ge 2$,
\begin{displaymath}
C_d^{-1}m_d(p, A)\le \|\langle A,Z_1\otimes\cdots\otimes Z_d\rangle\|_p \le C_d m_d(p, A).
\end{displaymath}
Moreover, if $\alpha = 1$, then the above inequality holds for all $d \ge 1$.
\end{theorem}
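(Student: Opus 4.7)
The plan is to derive Theorem~\ref{thm:Weibull_chaos} by direct specialization of the abstract chaos moment inequalities from \cite{Chaos3d}; concretely, Theorems~3.1 and 3.2 there yield the upper and lower moment bounds for decoupled tetrahedral chaoses of order $d\le 3$ in arbitrary symmetric random variables with log-concave tails, while Theorem~3.4 (combined with the earlier work of Lata{\l}a--{\L}ochowski \cite{LL}) extends the full-degree statement to the exponential case $\alpha=1$. All these results express the $L^p$ norm of the chaos as a deterministic functional of the coefficient matrix $A$, built from mixed norms in which each tensor direction is controlled either in an $\ell_2$ sense (reflecting the ``Gaussian-like'' core of the distribution) or in an Orlicz-type sense dictated by the Young function $N$ governing the tails. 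The first step is therefore to isolate the exact form of these abstract norms and of the combinatorial partition sum in which they enter.

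The core of the proof is to unfold these abstract norms in the Weibull case $N(t)=t^\alpha$, identifying them with the concrete quantities $\|A\|_{\mathcal{J}|\mathcal{K}}$ written in \eqref{eq:mdpA}. For a symmetric Weibull variable $Y_i$ with $\p(|Y_i|\ge t)=e^{-t^\alpha}$ one has $\|Y_i\|_p$ of order $p^{1/\alpha}$, and the unit ball associated with the Orlicz norm corresponding to $t^\alpha$ is, up to a universal constant, an $\ell_\alpha$ ball. In the framework of \cite{Chaos3d} each block of indices in the partition carries one of two tags: a ``Gaussian'' tag, contributing the factor $p^{1/2}$ together with an $\ell_2$ constraint over all of its indices, and a ``tail'' tag, contributing the factor $p^{1/\alpha}$ together with an $\ell_\alpha$ constraint on a single distinguished coordinate of the block and an $\ell_2$ constraint on the remaining ones. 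Summing over all possible taggings produces exactly the double sum over $I\subseteq[d]$, $\mathcal{J}\in P_I$ and $\mathcal{K}\in P_{[d]\setminus I}$ that appears in $m_d(p,A)$, with the explicit sum $\sum_{s_1\in K_1,\ldots,s_k\in K_k}$ corresponding to the choice of distinguished coordinate in each tail-tagged block.

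With this dictionary established, both directions of the asserted two-sided bound $C_d^{-1} m_d(p,A) \le \|\langle A,Z_1\otimes\cdots\otimes Z_d\rangle\|_p \le C_d m_d(p,A)$ follow from the cited theorems. The restriction $d\le 3$ simply reflects the current state of the art for chaoses in general log-concave tails; for $\alpha=1$ the Weibull distribution is the symmetric exponential, and the full-degree results of \cite{LL} then remove this restriction. The main obstacle in executing this plan is the careful verification that the abstract mixed norms from \cite{Chaos3d} unfold exactly into $\|A\|_{\mathcal{J}|\mathcal{K}}$ as defined in \eqref{eq:mdpA}; this amounts to an elementary but somewhat tedious duality computation between the Orlicz norm with Young function $t^\alpha$ and the $\ell_\alpha$ norm, together with combinatorial bookkeeping of the partition sum and of the role of the distinguished coordinates $s_l$.
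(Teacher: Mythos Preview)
Your proposal is correct and matches the paper's approach: the paper does not prove Theorem~\ref{thm:Weibull_chaos} independently but simply states it as the specialization of Theorems~3.1, 3.2 and 3.4 of \cite{Chaos3d} to the Weibull case, exactly as you outline. One small attribution point: the paper credits all three results (including the $\alpha=1$, arbitrary-$d$ case) to \cite{Chaos3d} alone, without invoking \cite{LL} here, so your parenthetical reference to Lata{\l}a--{\L}ochowski is not needed for this derivation.
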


Before we proceed, let us provide a few specific examples of the norms $\|A\|_{\mathcal{J}|\mathcal{K}}$, which for $\alpha < 2$ are more complicated than in the Gaussian case. In what follows, $\beta = \frac{\alpha}{\alpha -1}$ (with $\beta = \infty$ for $\alpha = 1$). For $d=1$,
\begin{align*}
  \|(a_i)\|_{\{1\}| \emptyset} &= \sup\big\{ \sum a_i x_i \colon \sum x_i^2 \le 1 \big\} = |(a_i)|_2, \\
  \|(a_i)\|_{\emptyset| \{1\}} &= \sup\big\{ \sum a_i y_i \colon \sum |y_i|^\alpha \le 1 \big\} = |(a_i)|_\beta.
\end{align*}
For $d=2$, $\|(a_{ij})\|_{\{1,2\}| \emptyset} = \|(a_{ij})\|_{\textup{HS}}$, $\|(a_{ij})\|_{\{1\}\{2\}| \emptyset} = \|(a_{ij})\|_{\ell_2 \to \ell_2}$,
\begin{align*}
  \|(a_{ij})\|_{\{1\}|\{2\}} &= \sup\big\{ \sum a_{ij} x_i y_j \colon \sum x_i^2 \le 1, \sum |y_j|^\alpha \le 1\big\} = \|(a_{ij})\|_{\ell_\alpha \to \ell_2}, \\
  \|(a_{ij})\|_{\{2\}|\{1\}} &= \sup\big\{ \sum a_{ij} y_i x_j \colon \sum x_j^2 \le 1, \sum |y_i|^\alpha \le 1\big\} = \|(a_{ij})\|_{\ell_2 \to \ell_\beta}, \\
  \|(a_{ij})\|_{\emptyset| \{1\}\{2\}} &= \sup\big\{\sum a_{ij} y_i z_j \colon \sum |y_i|^\alpha \le 1, \sum |z_j|^\alpha \le 1\big\} = \|(a_{ij})\|_{\ell_\alpha \to \ell_\beta},
\end{align*}
and
\begin{align*}
  \|(a_{ij})\|_{\emptyset| \{1,2\}} &= \sup\big\{\sum a_{ij} y_{ij} \colon \sum_i \big(\sum_j y_{ij}^2\big)^{\frac{\alpha}{2}} \le 1\big\} + \sup\big\{\sum a_{ij} y_{ij} \colon \sum_j \big(\sum_i y_{ij}^2\big)^{\frac{\alpha}{2}} \le 1\big\} \\
   &= \Big(\sum_i \big( \sum_j a_{ij}^2 \big)^{\beta/2} \Big)^{1/\beta} +
      \Big(\sum_j \big( \sum_i a_{ij}^2 \big)^{\beta/2} \Big)^{1/\beta}.
\end{align*}
For $d=3$, we have, for example,
\begin{align*}
  \|(a_{ijk})\|_{\{2\}|\{1\}\{3\}} &= \sup\big\{ \sum a_{ijk} y_i x_j z_k \colon \sum |x_j|^2 \le 1, \sum |y_i|^\alpha \le 1, \sum |z_k|^\alpha \le 1 \big\}, \\
  \|(a_{ijk})\|_{\{2\}|\{1,3\}} &= \sup\big\{ \sum a_{ijk} x_j y_{ik} \colon \sum x_j^2 \le 1, \sum_i \big( \sum_k y_{ik}^2 \big)^{\frac{\alpha}{2}} \le 1 \big\} \\
                                &+ \sup\big\{ \sum a_{ijk} x_j y_{ik} \colon \sum x_j^2 \le 1, \sum_k \big( \sum_i y_{ik}^2 \big)^{\frac{\alpha}{2}} \le 1 \big\}, \\
  \|(a_{ijk})\|_{\emptyset|\{1\} \{2,3\}} &= \sup\big\{ \sum a_{ijk} y_i z_{jk} \colon \sum |y_i|^\alpha \le 1, \sum_j \big( \sum_k z_{jk}^2 \big)^{\frac{\alpha}{2}} \le 1 \big\} \\
                                          &+ \sup\big\{ \sum a_{ijk} y_i z_{jk} \colon \sum |y_i|^\alpha \le 1, \sum_k \big( \sum_j z_{jk}^2 \big)^{\frac{\alpha}{2}} \le 1 \big\}.
\end{align*}

In particular, from Theorem \ref{thm:Weibull_chaos}  it follows  that for $\alpha \in [1,2]$, if $Y = (Y_1, \ldots, Y_n)$ is as in Theorem~\ref{thm:Weibull_chaos} then for every $x \in \R^n$,
\begin{displaymath}
\frac{1}{C}(\sqrt{p}|x|_2 + p^{1/\alpha}|x|_\beta) \le \|\langle x,Y\rangle\|_p \le C(\sqrt{p}|x|_2 + p^{1/\alpha}|x|_\beta),
\end{displaymath}
where $|\cdot|_r$ stands for $\ell_r^n$ norm (see also~\cite{GK}).
Thus, for $\beta \in (2, \infty)$, the inequality of Theorem \ref{thm:AidaStroock}, for $m=n$, $k = 1$ and a $\mathcal{C}^1$ function $f \colon \R^n \to \R$, can be written in the form
\begin{equation}\label{ineq:sobolev-beta}
\|f(X) - \E f(X)\|_p \le C_\beta \|\langle \nabla f(X),Y\rangle\|_p.
\end{equation}
This allows for induction, just as in the proof of Proposition \ref{prop:moment_Poincare}, except that instead of Gaussian vectors we will have independent copies of $Y$. We can thus repeat the proof of Theorem \ref{thm:main}, using the above observation and Theorem \ref{thm:Weibull_chaos} instead of Theorem \ref{thm_Latala_dec}. This argument will then yield the following proposition, which is a counterpart of Theorem \ref{thm:main}.
At the moment we can prove it only for $D \le 3$, clearly generalizing Theorem \ref{thm:Weibull_chaos} to chaoses of arbitrary degree would immediately imply it for general $D$.

\begin{prop}\label{prop:Weibull} Let $X = (X_1,\ldots,X_n)$ be a random vector in $\R^n$, with independent components. Let $\beta \in (2,\infty)$ and assume that for all $i \le n$, $X_i$ satisfies the $\beta$-modified logarithmic Sobolev inequality with constant $D_{LS_\beta}$. Let $f \colon \R^n \to \R$ be a $\mathcal{C}^D$ function. Define
\[
m(p, f) = \big\| m_D(p, \D^D f(X)) \big\|_p + \sum_{1 \le d \le D-1} m_d(p, \E \D^d f(X)),
\]
where $m_d(p, A)$ is defined by~\eqref{eq:mdpA} with $\alpha = \frac{\beta}{\beta-1}$.
If $D\le 3$ then for $p\ge 2$,
\begin{displaymath}
\|f(X)-\E f(X)\|_p \le C_{\beta,D_{LS_\beta}} m(p, f).
\end{displaymath}
As a consequence, for all $p \ge 2$,
\begin{displaymath}
\p\big(|f(X) - \E f(X)| \ge C_{\beta,D_{LS_\beta}} m(p, f)\big) \le e^{-p}.
\end{displaymath}
\end{prop}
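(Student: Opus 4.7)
The proof mirrors the Gaussian argument of Proposition \ref{prop:moment_Poincare} with Weibull vectors replacing Gaussian vectors. The starting point is a Weibull analog of the linearization \eqref{eq:linearization}. Combining Theorem \ref{thm:AidaStroock} (applied to each $X_i$ separately and tensorized) yields
\[
\|f(X) - \E f(X)\|_p \le C_\beta D_{LS_\beta}^{1/2}\sqrt{p}\,\big\||\nabla f(X)|_2\big\|_p + C_\beta D_{LS_\beta}^{1/\beta} p^{1/\alpha}\big\||\nabla f(X)|_\beta\big\|_p.
\]
On the other hand, specializing Theorem \ref{thm:Weibull_chaos} to $d=1$ gives $\|\langle x, Y\rangle\|_p \ge c(\sqrt{p}|x|_2 + p^{1/\alpha}|x|_\beta)$ for every $x \in \R^n$. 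Taking $Y$ independent of $X$ and using Fubini, the two estimates combine into the linearized Sobolev inequality \eqref{ineq:sobolev-beta}: for any $\mathcal{C}^1$ function $g$,
\[
\|g(X) - \E g(X)\|_p \le C_{\beta, D_{LS_\beta}} \big\|\langle \nabla g(X), Y\rangle\big\|_p.
\]

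Next, let $Y_1, \ldots, Y_D$ be independent copies of $Y$, independent of $X$. I iterate \eqref{ineq:sobolev-beta} exactly as in the proof of Proposition \ref{prop:moment_Poincare}: at step $d$, apply \eqref{ineq:sobolev-beta} conditionally on $Y_1, \ldots, Y_{d-1}$ to $f_{d-1}(x) := \langle \D^{d-1} f(x), Y_1 \otimes \cdots \otimes Y_{d-1}\rangle$, using $\langle \nabla f_{d-1}(X), Y_d\rangle = \langle \D^d f(X), Y_1 \otimes \cdots \otimes Y_d\rangle$, and split the resulting chaos of order $d-1$ via the triangle inequality into a centered part (to be processed by the next iteration) and $\|\langle \E_X\D^{d-1} f(X), Y_1\otimes\cdots\otimes Y_{d-1}\rangle\|_p$. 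A straightforward induction on $D$ yields
\[
\|f(X) - \E f(X)\|_p \le C_{\beta,D_{LS_\beta}}^D \big\|\langle \D^D f(X), Y_1 \otimes \cdots \otimes Y_D\rangle\big\|_p + \sum_{d=1}^{D-1} C_{\beta,D_{LS_\beta}}^d \big\|\langle \E \D^d f(X), Y_1 \otimes \cdots \otimes Y_d\rangle\big\|_p.
\]

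Finally, apply Theorem \ref{thm:Weibull_chaos} (in its decoupled form) to each term: since $D \le 3$ this is allowed for every $\alpha \in [1,2]$. For the lower-order terms $1 \le d \le D-1$, $\E \D^d f(X)$ is a deterministic matrix, so the chaos norm is controlled by $m_d(p, \E \D^d f(X))$. For the top-order term, condition on $X$ first, apply Theorem \ref{thm:Weibull_chaos} to the chaos in $Y_1, \ldots, Y_D$, and then integrate out $X$ in $L^p$ to obtain $\|m_D(p, \D^D f(X))\|_p$. Summing these bounds gives the asserted moment estimate. The tail bound then follows from Chebyshev's inequality $\p(|Z| \ge e\|Z\|_p) \le e^{-p}$ exactly as in Theorem \ref{thm:main}.

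The main (and only serious) obstacle is the restriction $D \le 3$, which is inherited directly from the scope of Theorem \ref{thm:Weibull_chaos} for $\alpha \in (1,2)$; the remainder of the argument is insensitive to the degree, and for $\alpha = 1$ the proposition extends verbatim to all $D$. No new combinatorial/tensor-product estimates are needed, since all of the intricacy of the Weibull norms $\|\cdot\|_{\mathcal{J}|\mathcal{K}}$ is packaged inside Theorem \ref{thm:Weibull_chaos} itself.
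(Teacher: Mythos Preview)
Your proposal is correct and follows essentially the same route as the paper: derive the linearized Sobolev inequality \eqref{ineq:sobolev-beta} from Theorem~\ref{thm:AidaStroock} together with the $d=1$ case of Theorem~\ref{thm:Weibull_chaos}, iterate it exactly as in Proposition~\ref{prop:moment_Poincare} with independent Weibull vectors in place of Gaussians, and then apply Theorem~\ref{thm:Weibull_chaos} to each resulting chaos. Your remarks on the role of the restriction $D\le 3$ and the $\alpha=1$ case are also in line with the paper.
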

\paragraph{Remarks}
\paragraph{1.} For $\beta = 2$, the estimates of the above proposition agree with those of Theorem \ref{thm:main_intro}. For $\beta > 2$ it improves on what can be obtained from Theorem \ref{thm:main} in two aspects (of course just for $D \le 3$). First, the exponent of $p$ is smaller as $(\gamma-1/2)d +\#(\mathcal{J}\cup\mathcal{K})/2 = (1/\alpha -1/2)d+\#\mathcal{J}/2 + \#\mathcal{K}/2 \ge \#\mathcal{J}/2 + \#\mathcal{K}/\alpha$. Second $\|A\|_{\mathcal{J}\cup \mathcal{K}} \ge \|A\|_{\mathcal{J}|\mathcal{K}}$ (since for $\alpha < 2$, $|x|_\alpha \ge |x|_2$, so the supremum on the left hand side is taken over a larger set).

\paragraph{2.} From results in \cite{Chaos3d} it follows that if $f$ is a tetrahedral polynomial of degree $D$ and $X_i$ are i.i.d. symmetric random variables satisfying $\p(|X_i| \ge t) = \exp(-t^\alpha)$, then the inequalities of Proposition \ref{prop:Weibull} can be reversed (up to constants), i.e.,
\begin{displaymath}
\|f(X) - \E f(X)\|_p \ge \frac{1}{C_D}m_f(p).
\end{displaymath}
This is true for any positive integer $D$.

\paragraph{3.} One can also consider another functional inequality, which may be regarded a counterpart of \eqref{eq:modifiedLS} for $\beta = \infty$. We say that a random vector $X$ in $\R^n$ satisfies the Bobkov-Ledoux inequality if for all locally Lipschitz positive functions such that $|\nabla f(x)|_\infty := \max_{1\le i \le n} |\frac{\partial }{\partial x_i} f(x)| \le d_{BL}f(x)$ for all $x$,
 \begin{align}\label{eq:BL}
 \Ent f^2(X) \le D_{BL} \E |\nabla f(X)|^2.
 \end{align}
 This inequality has been introduced in \cite{BobLed_exp} to provide a simple proof of Talagrand's two-level concentration for the symmetric exponential measure in $\R^n$. Here $|\frac{\partial }{\partial x_i} f(x)|$ is defined as ``partial length of gradient'' (see \eqref{eq:length_of_gradient}). Thus in the case of differentiable functions $|\nabla f|_\infty$ coincides with the $\ell_\infty^n$ norm of the ``true'' gradient.

In view of Theorem \ref{thm:AidaStroock} it is natural to conjecture that the Bobkov-Ledoux inequality implies
\begin{align}\label{eq:Sobolev_exp}
\|f(X) - \E f(X)\|_p \le C\Big(\sqrt{p}\big\||\nabla f(X)|\big\|_p + p\big\| |\nabla f(X)|_\infty \big\|_p\Big),
\end{align}
which in turn implies~\eqref{ineq:sobolev-beta} with $Y = (Y_1, \ldots, Y_n)$ being a vector of independent symmetric exponential variables and some $C_\infty < \infty$. This would yield an analogue of Proposition~\ref{prop:Weibull} for $\beta = \infty$, this time with no restriction on $D$.

Unfortunately at present we do not know whether the implication \eqref{eq:BL} $\implies$ \eqref{eq:Sobolev_exp} holds true or even if  \eqref{eq:Sobolev_exp} holds for the symmetric exponential measure in $\R^n$.
We only are able to prove the following weaker inequality, which is however not sufficient to obtain a counterpart of Proposition \ref{prop:Weibull} for $\beta = \infty$.

\begin{prop}\label{prop:exp}
If $X$ is a random vector in $\R^n$, which satisfies \eqref{eq:BL}, then for any locally Lipschitz function $f \colon \R^n \to \R$, and any $p \ge 2$,
\begin{displaymath}
\|f(X) - \E f(X)\|_p \le 3\Big(D_{BL}^{1/2} \sqrt{p}\big\||\nabla f(X)|\big\|_p + d_{BL}^{-1}p \big\||\nabla f(X)|_\infty\big\|_\infty\Big).
\end{displaymath}
\end{prop}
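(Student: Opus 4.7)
The strategy is to replay the Aida--Stroock argument from the proof of Theorem~\ref{thm:AidaStroock}, the only genuine novelty being that the Bobkov--Ledoux inequality~\eqref{eq:BL} applies only to functions $g$ satisfying the structural constraint $|\nabla g|_\infty \le d_{BL}\, g$. The main obstacle is therefore to ensure that this constraint is preserved under the substitution $g \mapsto g^{t/2}$ for every $t \in [2,p]$; the trick is to work not with $|f - \E f(X)|$ directly but with its shift by a constant $C=C(p)$ tuned precisely so that the constraint survives throughout the integration.

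We may assume $b := \bigl\||\nabla f(X)|_\infty\bigr\|_\infty < \infty$ (otherwise the right-hand side is infinite) and, by the usual truncation-and-limit procedure used at the end of the proof of Theorem~\ref{thm:AidaStroock}, that $f$ is bounded and locally Lipschitz. Set
\[
C = \tfrac{p\, b}{2\, d_{BL}}, \qquad h = |f - \E f(X)| + C,
\]
so that $h$ is strictly positive and bounded, with $|\nabla h| \le |\nabla f|$ and $|\nabla h|_\infty \le b$ pointwise. For each $t \in [2,p]$ the function $h^{t/2}$ then satisfies the Bobkov--Ledoux hypothesis: by the chain rule,
\[
|\nabla h^{t/2}|_\infty = \tfrac{t}{2}\, h^{t/2-1}\, |\nabla h|_\infty \le \tfrac{t}{2}\, h^{t/2-1}\, b = \tfrac{t\, d_{BL}}{p}\, C\, h^{t/2-1} \le d_{BL}\, h^{t/2},
\]
where the last inequality uses $t/p \le 1$ and $C \le h$.

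Once the constraint is verified, the remainder is a routine transcription of Aida--Stroock. Applying~\eqref{eq:BL} to $h^{t/2}$ yields $\Ent h(X)^t \le (D_{BL}\, t^2/4)\, \E h(X)^{t-2}|\nabla h(X)|^2$. Inserting this into the identity for the derivative of $\Psi(t) := (\E h(X)^t)^{2/t}$ and estimating $\E h(X)^{t-2}|\nabla h(X)|^2$ via H\"older with conjugate exponents $t/2$ and $t/(t-2)$ gives
\[
\Psi'(t) \le \tfrac{D_{BL}}{2}\, \bigl\||\nabla h(X)|\bigr\|_t^2 \le \tfrac{D_{BL}}{2}\, \bigl\||\nabla f(X)|\bigr\|_p^2.
\]
Integrating from $2$ to $p$ and taking a square root yields
\[
\|h(X)\|_p \le \|h(X)\|_2 + \sqrt{D_{BL}\, p/2}\; \bigl\||\nabla f(X)|\bigr\|_p.
\]

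Finally, plugging $1 + \varepsilon v$ into~\eqref{eq:BL} and letting $\varepsilon \to 0$ yields the Poincar\'e inequality with constant $D_{BL}/2$, whence $\|f(X) - \E f(X)\|_2 \le \sqrt{D_{BL}/2}\; \bigl\||\nabla f(X)|\bigr\|_p$. Combining this with $\|h(X)\|_p \ge \|f(X) - \E f(X)\|_p$ and $\|h(X)\|_2 \le \|f(X) - \E f(X)\|_2 + C$ produces
\[
\|f(X) - \E f(X)\|_p \le \sqrt{2\, D_{BL}\, p}\; \bigl\||\nabla f(X)|\bigr\|_p + \tfrac{p}{2\, d_{BL}}\, \bigl\||\nabla f(X)|_\infty\bigr\|_\infty,
\]
which is stronger than the claimed bound since $\sqrt{2} < 3$ and $1/2 < 3$. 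The only genuinely new step is the constraint verification above; the rest is a direct adaptation of Aida--Stroock.
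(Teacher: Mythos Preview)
Your proof is correct and in fact somewhat cleaner than the paper's. Both arguments follow the same Aida--Stroock scheme; the only issue, as you identify, is guaranteeing the Bobkov--Ledoux side condition $|\nabla g|_\infty \le d_{BL}\,g$ for $g=h^{t/2}$ throughout the range $t\in[2,p]$.

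The paper handles this differently: it works with $f_1=\max(f,\|f\|_p/2)$, so that $\min f_1\ge \|f_1\|_p/3$, and then makes a case distinction on whether $\|f\|_p$ exceeds $\tfrac{3p}{2d_{BL}}\||\nabla f|_\infty\|_\infty$ (if not, the desired bound is trivial; if so, the lower bound on $f_1$ forces the constraint). Your additive shift $h=|f-\E f(X)|+C$ with $C=\tfrac{p b}{2d_{BL}}$ achieves the same effect more directly: the constraint holds automatically since $C\le h$ and $t\le p$, and no case split is needed. Your route also yields sharper numerical constants ($\sqrt{2}$ and $1/2$ in place of $3$). Conversely, the paper's truncation trick has the minor advantage of producing an intermediate inequality~\eqref{eq:BL_fp} valid for general nonnegative $f$ (not only $|f-\E f|$), which may be of independent use.
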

\begin{proof}
To simplify the notation we suppress the argument $X$. In what follows $\|\cdot\|_p$ denotes the $L_p$ norm with respect to the distribution of $X$.

Let us fix $p \ge 2$ and consider $f_1 = \max(f, \|f\|_p /2)$. We have
\begin{align}\label{eq:properties_f_1}
\|f_1\|_p &\ge \|f\|_p,\\
\|f_1\|_2 &\le \frac12 \|f\|_p + \|f\|_2,\nonumber\\
\|f_1\|_p &\le \frac{3}{2}\|f\|_p \le 3 \min f_1.\nonumber
\end{align}

Moreover, $f_1$ is locally Lipschitz and we have pointwise estimates $|\nabla f_1|\le |\nabla f|$, $|\nabla f_1|_\infty \le |\nabla f|_\infty$.
Assume now that we have proved that
\begin{align}\label{eq:BL_auxiliary}
\|f_1\|_p \le \|f_1\|_2 + \sqrt{\frac{D_{BL}}{2}}\sqrt{p}\big\||\nabla f_1| \big\|_p + \frac{3p}{2d_{BL}}\big\||\nabla f_1|_\infty\big\|_\infty.
\end{align}
Then, together with the two first inequalities of~\eqref{eq:properties_f_1}, it yields
\begin{align*}
\|f\|_p &\le \|f_1\|_p \le \|f_1\|_2 + \sqrt{\frac{D_{BL}}{2}}\sqrt{p}\big\||\nabla f_1| \big\|_p + \frac{3p}{2d_{BL}}\big\||\nabla f_1|_\infty\big\|_\infty\\
& \le \frac{1}{2}\|f\|_p + \|f\|_2 + \sqrt{\frac{D_{BL}}{2}}\sqrt{p}\big\||\nabla f| \big\|_p + \frac{3p}{2d_{BL}}\big\||\nabla f|_\infty\big\|_\infty,
\end{align*}
which gives
\begin{equation}\label{eq:BL_fp}
\|f\|_p \le 2\Big(\|f\|_2 + \sqrt{\frac{D_{BL}}{2}}\sqrt{p}\big\||\nabla f| \big\|_p + \frac{3p}{2d_{BL}}\big\||\nabla f|_\infty\big\|_\infty\Big).
\end{equation}
Since \eqref{eq:BL} implies the Poincar\'{e} inequality with constant $D_{BL}/2$ (see e.g. Proposition 2.3 in \cite{Gentil-Guillin-Miclo-2005}), we can conclude the proof applying~\eqref{eq:BL_fp} to $|f - \E f|$ (similarly as in the proof of Theorem \ref{thm:AidaStroock}). Thus it is enough to prove \eqref{eq:BL_auxiliary}.

From now on we are going to work with the function $f_1$ only, so for brevity we will drop the subscript and write $f$ instead of $f_1$. Assume $\|f\|_p \ge \frac{3p}{2d_{BL}} \||\nabla f|_\infty \|_\infty$ (otherwise~\eqref{eq:BL_auxiliary} is trivially satisfied). Then, using the third inequality of \eqref{eq:properties_f_1}, for $2\le t \le p$ and all $x \in \R^n$,
\begin{displaymath}
|\nabla f^{t/2}(x)|_\infty \le \frac{t}{2} f^{t/2-1}(x) |\nabla f(x)|_\infty \le \frac{3}{2} f^{t/2}(x) \frac{p|\nabla f(x)|_\infty}{\|f\|_p} \le d_{BL} f^{t/2}(x).
\end{displaymath}
We can thus apply~\eqref{eq:BL} with $f^{t/2}$, which together with H\"older's inequality gives
\begin{displaymath}
\Ent f^t \le D_{BL} \E |\nabla f^{t/2}|^2 \le D_{BL}\frac{t^2}{4}\E \big( f^{t-2} |\nabla f|^2 \big) \le D_{BL}\frac{t^2}{4} \big\||\nabla f|\big\|_t^2 (\E f^t)^{1 - \frac{2}{t}}.
\end{displaymath}
Now, as in the proof of Theorem \ref{thm:AidaStroock}, we have
\begin{displaymath}
\frac{d}{dt} (\E f^t)^{2/t} = \frac{2}{t^2} (\E f^t)^{\frac{2}{t} - 1} \Ent f^t \le \frac{D_{BL}}{2}\big\||\nabla f|\big\|_p^2,
\end{displaymath}
which upon integrating gives
\begin{displaymath}
\|f\|_p^2 \le \|f\|_2^2 + \frac{D_{BL}}{2} p \big\||\nabla f|\big\|_p^2,
\end{displaymath}
which clearly implies \eqref{eq:BL_auxiliary}.
\end{proof}

\section{Appendix}

\subsection{Decoupling inequalities}
Let us here state the main decoupling result for $U$-statistics (Theorem 1 in \cite{dlPMS1}).

\begin{theorem}\label{thm:decoupling} For natural numbers $n \ge d$ let $(X_i)_{i=1}^n$ be a sequence of independent random variables with values in a measurable space $(S,\mathcal{S})$ and let $(X\ub{j}_i)_{i=1}^n$ $j= 1,\ldots,d$ be $d$ independent copies of this sequence. Let $B$ be a separable Banach space and for each $\ii \in [n]\uu{d}$ let $h_\ii\colon S^d \to B$ be a measurable function. Then for all $t > 0$,
\begin{displaymath}
\p\Big(\Big\|\sum_{\ii \in [n]\uu{d}} h_\ii (X_{i_1},\ldots,X_{i_d})\Big\|> t\Big) \le C_d \p\Big(\Big\|\sum_{\ii \in [n]\uu{d}} h_\ii (X\ub{1}_{i_1},\ldots,X\ub{d}_{i_d})\Big\|> t/C_d\Big).
\end{displaymath}
In consequence for all $p \ge 1$,
\begin{displaymath}
\Big\|\sum_{\ii \in [n]\uu{d}} h_\ii (X_{i_1},\ldots,X_{i_d})\Big\|_p \le C_d \Big\|\sum_{\ii \in [n]\uu{d}} h_\ii (X\ub{1}_{i_1},\ldots,X\ub{d}_{i_d})\Big\|_p.
\end{displaymath}

If moreover the functions $h_\ii$ are symmetric in the sense that, for all $x_1,\ldots,x_d \in S$ and all permutations $\pi\colon [d]\to [d]$,
$h_{i_1,\ldots,i_d}(x_1,\ldots,x_d) = h_{i_{\pi_1},\ldots,i_{\pi_d}}(x_{\pi_1},\ldots,x_{\pi_d})$,
then for all $t > 0$,
\begin{displaymath}
\p\Big(\Big\|\sum_{\ii \in [n]\uu{d}} h_\ii (X\ub{1}_{i_1},\ldots,X\ub{d}_{i_d})\Big\|> t\Big) \le C_d \p\Big(\Big\|\sum_{\ii \in [n]\uu{d}} h_\ii (X_{i_1},\ldots,X_{i_d})\Big\|> t/C_d\Big)
\end{displaymath}
and in  consequence for all $p \ge 1$,
\begin{displaymath}
\Big\|\sum_{\ii \in [n]\uu{d}} h_\ii (X\ub{1}_{i_1},\ldots,X\ub{d}_{i_d})\Big\|_p \le C_d\Big\|\sum_{\ii \in [n]\uu{d}} h_\ii (X_{i_1},\ldots,X_{i_d})\Big\|_p.
\end{displaymath}

\end{theorem}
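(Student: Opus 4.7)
The plan is to establish both directions through a randomization argument combined with Jensen's inequality. Let $(\eta_i)_{i=1}^n$ be i.i.d.\ uniform on $[d]$, independent of all other randomness, and set $I_j = \{i\in[n] : \eta_i = j\}$. For each $\ii \in [n]\uu{d}$ let $E_{\ii} = \{\eta_{i_1}=1, \ldots, \eta_{i_d}=d\}$, so that $\Pr(E_{\ii}) = d^{-d}$.

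For the first (upper) inequality, I would consider the ``restricted decoupled'' sum
\[
T = \sum_{\ii \in [n]\uu{d}} \mathbf{1}_{E_{\ii}}\, h_{\ii}\bigl(X^{(1)}_{i_1}, \ldots, X^{(d)}_{i_d}\bigr).
\]
The key observation is that, conditionally on $\eta$, we may couple everything on one probability space by setting $X_i = X^{(\eta_i)}_i$. This preserves the joint law of $(X_i)$ because the blocks indexed by $I_1, \ldots, I_d$ are independent and each $X^{(j)}_i$ has the marginal law of $X_i$. Under this coupling, on the event $E_{\ii}$ we have $X_{i_j} = X^{(j)}_{i_j}$, so $T$ has the same law as $\sum_{\ii} \mathbf{1}_{E_{\ii}} h_{\ii}(X_{i_1}, \ldots, X_{i_d})$. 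Taking $\mathbb{E}_\eta$ with the $X_i$ fixed produces the coupled sum multiplied by $d^{-d}$. Jensen's inequality then yields the moment form, provided we can absorb the indicator weights via a contraction step $\|T\|_p \le C_d \bigl\|\sum_\ii h_{\ii}(X^{(1)}_{i_1}, \ldots, X^{(d)}_{i_d})\bigr\|_p$. That contraction is proved by iterating over coordinates: conditioning on $(X^{(k)})_{k\neq j}$ reduces the problem to a randomly masked linear combination of independent vectors, which is controlled by the unmasked one by a Khintchine/contraction principle.

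For the tail version I would upgrade the moment inequality via a Hoffmann--J{\o}rgensen type argument combined with the maximal inequality of de la Pe{\~n}a--Montgomery-Smith: the same randomization is applied to the level event $\{|\sum_\ii h_\ii(X_{i_1},\ldots,X_{i_d})| > t\}$, and one uses a L{\'e}vy-type symmetric reflection to promote the conditional-expectation bound to a tail bound. For the symmetric case (the lower bound, which only needs $h_\ii$ symmetric under simultaneous permutation of indices and arguments), the argument is reversed: by symmetry the decoupled sum equals $(d!)^{-1}\sum_\sigma \sum_\ii h_\ii(X^{(\sigma(1))}_{i_1},\ldots,X^{(\sigma(d))}_{i_d})$, and one applies the previous randomization in reverse (labelling copies rather than indices) to identify each symmetrized decoupled sum with a conditional expectation of the coupled sum.

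The main obstacle is the tail inequality. The moment version follows cleanly from Jensen and contraction, but tail estimates require that the randomization be coupled with a maximal or reflection argument since the events $E_\ii$ are not independent across $\ii$. Passing this coupling through the contraction step while retaining the universal constant $C_d$ is the delicate point, and is the technical core of \cite{dlPMS1}.
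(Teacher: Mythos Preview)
The paper does not give its own proof of this theorem: it is quoted in the Appendix as ``the main decoupling result for $U$-statistics (Theorem 1 in \cite{dlPMS1})'' and is used as a black box. There is therefore nothing in the paper to compare your argument against.

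That said, your outline is recognizably the de la Pe\~na--Montgomery-Smith scheme: introduce i.i.d.\ colours $\eta_i$ uniform on $[d]$, couple via $X_i = X^{(\eta_i)}_i$, observe that $\E_\eta \mathbf{1}_{E_\ii} = d^{-d}$, and apply Jensen. You are also right that the serious work lies in the tail version; the moment inequality is routine once the coupling is in place. One inaccuracy: the step you call ``contraction'' (removing the indicators $\mathbf{1}_{E_\ii}$ from the restricted decoupled sum) is not done in \cite{dlPMS1} by a Khintchine-type contraction principle iterated over coordinates. Rather, conditionally on $\eta$, the restricted decoupled sum and the full decoupled sum differ only through which independent copies feed which coordinates, and one compares them directly in distribution using the partition $(I_1,\ldots,I_d)$ together with a positivity/maximal argument. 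Your description of the reverse (symmetric) direction is essentially correct in spirit.
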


\subsection{Proof of Lemma \ref{lemma_comp_Gauss}}

Without loss of generality we can assume that $M = 1$. It is easy to see that for some constant $C_k$ and $t > 1$, $\p(C_k|g_{i1}\cdots g_{ik}| > t) \ge 2\exp(-t^{2/k})$. Since $ \p(|Y_i|\ge t) \le 2\exp(-t^{2/k})$, we get
\begin{displaymath}
\p(|Y_i\Ind{|Y_i|\ge 1}| \ge t)  \le \p(C_k |g_{i1}\cdots g_{ik}| > t).
\end{displaymath}

Therefore, using the inverse of the distribution function, we can define i.i.d. copies $\tilde{Y}_i$ of $|Y_i\Ind{|Y_i|\ge 1}|$ and i.i.d copies $Z_i$ of $|g_{i1}\cdots g_{ik}|$, such that $\tilde{Y}_i \le C_k Z_i$ pointwise. We may assume that these copies are defined on a common probability space with $Y_i$ and $g_{ij}$. We can now write for a sequence $\varepsilon_i$ of i.i.d. Rademacher variables independent of all he variables introduced so far.

\begin{align*}
\|\sum_{i=1}^n a_i Y_i\|_p & = \|\sum_{i=1}^n a_i \varepsilon_i|Y_i|\|_p\\
 &\le \|\sum_{i=1}^n a_i \varepsilon_i|Y_i\ind{|Y_i|< 1}|\|_p + \|\sum_{i=1}^n a_i \varepsilon_i|Y_i\ind{|Y_i|\ge 1}|\|_p\\
&=  \|\sum_{i=1}^n a_i \varepsilon_i|Y_i\ind{|Y_i|< 1}|\|_p + \|\sum_{i=1}^n a_i \varepsilon_i\tilde{Y}_i\|_p\\
&\le \|\sum_{i=1}^n a_i \varepsilon_i\|_p + C_k\|\sum_{i=1}^n a_i \varepsilon_i Z_i\|_p\\
&\le C_k \|\sum_{i=1}^n a_i \varepsilon_i \E_Z Z_i\|_p + C_k\|\sum_{i=1}^n a_i \varepsilon_iZ_i\|_p\\
&\le C_k \|\sum_{i=1}^n a_i \varepsilon_i Z_i\|_p = C_k\|\sum_{i=1}^n a_i g_{i_1}\cdots g_{ik}\|_p,
\end{align*}
where in the second inequality we used the contraction principle (once conditionally on $\tilde{Y}_i$'s and $Z_i$'s) and in the third one Jensen's inequality.
\bibliographystyle{abbrv}

\bibliography{citations}

\bigskip

\noindent E-mails: \texttt{R.Adamczak@mimuw.edu.pl}, \texttt{P.Wolff@mimuw.edu.pl}

\end{document}